\numberwithin{equation}{section}
\def\csname ver@etex.sty\endcsname{3000/12/31}
\definecolor{mentadent}{RGB}{0,122,41}
\definecolor{citegreen}{rgb}{0,0.3,0}
\definecolor{refred}{rgb}{0.5,0,0}
\theoremstyle{plain}
\newtheorem{theorem}{Theorem}[section]
\newtheorem{lemma}[theorem]{Lemma}
\newtheorem{proposition}[theorem]{Proposition}
\newtheorem{corollary}[theorem]{Corollary}
\theoremstyle{definition}
\newtheorem{definition}[theorem]{Definition}
\theoremstyle{remark}
\newtheorem{remark}[theorem]{Remark}
\crefname{definition}{Definition}{Definitions}
\crefname{theorem}{Theorem}{Theorems}
\crefname{lemma}{Lemma}{Lemmas}
\crefname{step}{Step}{Steps}
\crefname{substep}{Step}{Steps}
\crefname{proposition}{Proposition}{Propositions}
\crefname{corollary}{Corollary}{Corollaries}
\crefname{remark}{Remark}{Remarks}
\crefname{section}{Section}{Sections}
\crefname{subsection}{Section}{Sections}
\crefname{equation}{}{}
\newcommand{\R}{\mathbb R}
\newcommand{\N}{\mathbb N}
\DeclareSymbolFont{YHlargesymbols}{OMX}{yhex}{m}{n}
\DeclareMathAccent{\arc}{\mathord}{YHlargesymbols}{"F3}
\renewcommand{\theta}{\vartheta}
\newcommand{\CC}{\mathscr{C}}
\newcommand{\Cn}{\mathrm{C}}
\newcommand{\barint}
{\rule[.036in]{.12in}{.009in}\kern-.16in \displaystyle\int\limits}
\newcommand{\Lip}{\mathrm{Lip}}
\let\div=\relax
\DeclareMathOperator{\div}{div}
\newcommand{\ee}{\mathrm{e}}
\let\oldchi=\chi
\renewcommand{\chi}{\raisebox{\depth}{\(\oldchi\)}}
\newcommand{\numberset}{\mathbb}
\renewcommand{\N}{\numberset{N}}
\renewcommand{\R}{\numberset{R}}
\newcommand{\Sf}{\numberset{S}}
\newcommand{\B}{\mathbb{B}}
\newcommand{\dist}{\mathop {\rm dist}\nolimits}
\newcommand{\loc}{\mathrm{loc}}%{\mathop {\rm loc}\nolimits}
\newcommand{\diam}{\mathop {\rm diam}\nolimits}
\DeclareMathOperator{\AVR}{AVR}
\let\d=\relax
\newcommand{\d}[1][g]{d_{#1}}
\newcommand{\capa}{{\rm Cap}}
\newcommand{\D}{{\rm D}}
\newcommand{\dd}{{\,\rm d}}
\newcommand{\HH}{{\rm H}}
\renewcommand{\phi}{\varphi}
\renewcommand{\epsilon}{\varepsilon}
\renewenvironment{cases}[1][l@{\ \ }l]{\arraycolsep=1.4pt\left\lbrace\kern-3pt\begin{array}{#1}}{\end{array}\right.}
\title[The asymptotic behaviour of $p$-capacitary potentials]{The asymptotic behaviour of $p$-capacitary potentials in Asymptotically Conical manifolds}
\author[L.~Benatti]{Luca Benatti}
\address{L.~Benatti, Universit\`a degli Studi di Trento,
via Sommarive 14, 38123 Povo (TN), Italy}
\email{luca.benatti@unitn.it}
\author[M.~Fogagnolo]{Mattia Fogagnolo}
\address{M.~Fogagnolo, Centro di Ricerca Matematica Ennio De Giorgi, Scuola Normale Superiore,
Piazza dei Cavalieri 3, 56126 Pisa (PI), Italy}
\email{mattia.fogagnolo@sns.it}
\author[L.~Mazzieri]{Lorenzo Mazzieri}
\address{L.~Mazzieri, Universit\`a degli Studi di Trento,
via Sommarive 14, 38123 Povo (TN), Italy}
\email{lorenzo.mazzieri@unitn.it}
\begin{document}

\begin{abstract}
We study the asymptotic behaviour of the $p$-capacitary potential and of the weak Inverse Mean Curvature Flow of a bounded set along the ends of an Asymptotically Conical Riemannian manifolds with asymptotically nonnegative Ricci curvature. 
\end{abstract}
\maketitle

\noindent MSC (2020): 58K55, %Asymptotic behavior of solutions to equations on manifolds
53E10, % Flows related to mean curvature
31C12.  %Potential theory on Riemannian manifolds

\medskip

\noindent \underline{Keywords}: nonlinear potential theory, inverse mean curvature flow, asymptotic behaviour, partial differential equations.

\section{Introduction and statement of the main results}
A natural question in the qualitative study of solutions to partial differential equations regards their behaviour at large distances on complete Riemannian manifolds. For harmonic potentials, a very satisfactory description was achieved in the fairly general framework of complete Riemannian manifolds with nonnegative Ricci curvature and Euclidean Volume Growth in~\cite{Li1997} and~\cite{Colding1997}. In \cite{Li1997} using the representation formula and in \cite{Colding1997} employing the monotonicity of the Almgren's frequency function, the authors proved that the harmonic potential $u$ of an open bounded subset $\Omega$ with smooth boundary, namely the solution to 
\begin{equation}
%\label{pb-intro}
\begin{cases}[rcl@{\ \ }l]
\Delta_g u &=&0 &\text{ on $M \smallsetminus \overline{\Omega}$,}\\
u&=&1 & \text{ on $\partial\Omega$,}\\
u(x) &\to& 0 &\text{ as $\d(x ,\Omega)\to +\infty$,}
\end{cases}
\end{equation}
is asymptotically equivalent to $\d(x, \Omega)^{2-n}$, far away from $\Omega$ (see also \cite{Ding2002,Agostiniani2018}). In \cite{Agostiniani2018} these results were applied to establish the Willmore Inequality in this framework and, consequently, a sharp Isoperimetric Inequality in dimension $n=3$ with an explicit optimal constant depending only on the Asymptotic Volume Ratio ($\AVR(g)$ for short) and the dimension of the manifold. The asymptotic behaviour of harmonic functions played a role also in the proof of the Positive Mass Theorem in \cite{Agostiniani2021} in the context of Asymptotically Flat Riemannian manifolds with nonnegative scalar curvature. 

In the last few years, it became evident that even stronger geometric conclusions can be drawn from the study of $p$-harmonic potentials on complete Riemannian manifolds, such as the validity of Minkowski Inequalities \cite{Fogagnolo2019, Agostiniani2019, Benatti2021} and the Riemannian Penrose Inequality \cite{Agostiniani2022}. Aim of the present work is to provide a detailed analysis of the asymptotic behaviour of these functions in the context of Asymptotically Conical Riemannian manifolds. This study extends some classical results, obtained  by Kichenassamy and Véron \cite{Kichenassamy1986} (see also \cite{Colesanti2015}) in the context of the flat Euclidean space.

%Moreover, they observe that asymptotic expansion (with error estimates) for the $p$-harmonic potential is arguably leading to a new proof of the Riemannian Penrose Inequality, not built on the (weak) Inverse Mean Curvature Flow as in the celebrated one provided by Huisken and Ilmanen in \cite{Huisken2001}.

To state our results, we now introduce some notation and setup. We recall that on a Riemannian manifold $(M,g)$ the $p$-capacitary potential of a bounded open domain $\Omega\subset M$ is the solution $u:M\smallsetminus \Omega \to \R$ to
\begin{equation}\label{pb-intro}
\begin{cases}[rcl@{\ \ }l]
\Delta^{\!(p)}_g u &=&0 &\text{ on $M \smallsetminus \overline{\Omega}$,}\\
u&=&1 & \text{ on $\partial\Omega$,}\\
u(x) &\to& 0 &\text{ as $\d(x ,\Omega)\to +\infty$,}
\end{cases}
\end{equation}
where $\Delta^{\!(p)}_g u = \div_g ( \abs{ \D u }^{p-2} \D u) $ is the $p$-Laplace operator associated with the metric $g$.
Throughout the paper we will systematically work on complete noncompact Riemannian manifolds $(M, g)$ of dimension $n \geq 3$ that are \emph{Asymptotically Conical} with \emph{quadratically asymptotically nonnegative Ricci curvature}, that is 
\begin{align}\label{eq:Ricci-intro}
    \Ric(x) \geq -\frac{(n-1)\kappa^2}{(1+\d(x,o))^2} 
\end{align}
for some some fixed $o\in M$ and $\kappa \in \R$, and every $x \in M$.
In accordance with \cite{Chodosh2017}, by the locution \emph{Asymptotically Conical Riemannian manifolds}, we are denoting,  manifolds such that, outside of some open bounded subset $K$, are diffeomorphic to a (truncated) cone $[1,+\infty) \times L$, over a smooth hypersuface $L$, called the \textit{link} of the cone, and such that the metric is close (in the $\CC^{k,\alpha}$-topology for some $k\in \N$ and $\alpha \in [0,1]$) to the cone metric $\hat{g}= \dd \rho ^2 + \rho^2 g_L$, where $\rho$ is the radial coordinate on the cone (see \cref{def:C0ACintro} for details).

Differently from the case $\Ric\geq 0$, where the Cheeger-Gromoll splitting theorem applies, here we possibly have to deal with manifolds that have more than one single end (see \cite[Definition 0.4 and discussion thereafter]{Li1992} for the notion of ends). Nevertheless, due to the compactness and smoothness of $\partial K$, the manifold is forced to have a finite number of  \emph{ends} $E_1, \ldots, E_N$. 
% We recall that an \emph{end} of a Riemannian manifold $(M,g)$ with respect to a compact subset $K\subseteq M$ is an unbounded connected component of $M\smallsetminus K$. We say that $(M,g)$ has a finite number of ends if the number of ends with respect to any compact subset $K \subseteq M $ is bounded by a number $k$ independent from $K$. We say that $(M,g)$ has $k$ ends if there is a compact subset $K\subseteq M$ such that $(M,g)$ has $k$ ends with respect to $K$ and for any other $K'$ containing $K$.
We can assume that  each end $E_i$ is diffeomorphic to $[1, +\infty)\times L_i$ for every $i=1, \ldots, N$, being $L_i$ the connected components of $L$, and consequently each end is Asymptotically Conical. We can define an Asymptotic Volume Ratio on each end as 
\begin{equation}
\label{eq:avrend}
    \AVR(g; E_i)= \lim_{R\to +\infty}\frac{\abs{B(o,R)\cap E_i}}{ \abs{\mathbb{B}^n}R^n}.
\end{equation}
Indeed, it is not hard to realise that, even if it is not monotone, the ratio $\abs{B(o,R)}/R^n$ has a limit as $R \to +\infty$ in Asymptotically Conical Riemannian manifolds. Moreover, the Asymptotic Volume Ratio $\AVR(g)$ of the manifold splits as
\begin{equation}
    \AVR(g)= \sum_{i=1}^N\AVR(g;E_i)
\end{equation}
where $0<\AVR(g)\leq N$ is the Asymptotic Volume Ratio of $(M,g)$.

% The Asymptotically Conical condition does not guarantee the existence of a solution to \eqref{pb-intro}. However, the $\Ric\geq 0$ assumption would be enough to ensure both the existence of the $p$-capacitary potential and the estimate
%  \begin{equation}\label{eq:LiYau-intro}
%      \Cn^{-1} \d(x,o)^{-\frac{n-p}{p-1}} \leq u_p(x) \leq \Cn \d(x,o)^{-\frac{n-p}{p-1}}
%  \end{equation}
% which is the starting point of our proof. However, a deeper reading of \cite{Holopainen1999} shows that we can relax this constriction, only requiring

% for every $x \in M$ and $o \in M$. Indeed, in this case the solution to problem~\eqref{pb-intro} exists, is unique and satisfies \eqref{eq:LiYau-intro}. In conclusion, our setting will be the class of complete Asymptotically Conical manifolds satisfying \eqref{eq:Ricci-intro}. 
It is not surprising that the asymptotic behaviour of the solution to \cref{pb-intro} could be different depending on the end, and in fact it turns out that the behaviour on a given end it is not affected by what happens on the others. More precisely, observing that for large enough $T$ the set $\set{u>1/T}$ contains $K$, we can define the \emph{normalised $p$-capacity of $\Omega$ relative to the end $E_i$} as
\begin{equation}
\label{eq:pcapi}
    \Cn^{(i)}_p(\Omega) =\left(\frac{p-1}{n-p}\right)^{p-1} \frac{1}{\abs{\S^{n-1}}} \int\limits_{\set{u=1/t}\cap E_i} \abs{\D u}^{p-1} \dd\sigma \, ,
\end{equation}
for every $i=1,\ldots,m$. Observe that, by virtue of the divergence theorem, the right hand side does not depend on $t>T$ and the above formula yields a well posed definition. The \emph{normalised $p$-capacity of $\Omega$} (see \cref{def:capandnormpcap}) then split as
\begin{equation}
    \Cn_p(\Omega)= \sum_{i=1}^N \Cn^{(i)}_p(\Omega).
\end{equation}
We are now ready to state our first main result.

\begin{theorem}[Asymptotic behaviour of the $p$-capacitary potential]\label{thm:asymptoticbehaviourofppotentialC0}
Let $(M,g)$ be a complete $\CC^0$-Asymptotically Conical Riemannian manifold with Ricci curvature satisfying \cref{eq:Ricci-intro}. Let $E_1, \ldots, E_N$ be the (finitely many) ends of $M$ with respect to the compact $K$ in \cref{def:C0ACintro}. Consider $\Omega\subset M$ be an open bounded subset with smooth boundary and $u:M\smallsetminus \Omega \to \R$ the solution to the problem \eqref{pb-intro}. Then
\begin{align}\label{eq:p-cap_asymptotic_behaviour_C0}
    u(x) &= \left( \frac{ \Cn^{(i)}_p( \Omega)}{ \AVR(g; E_i)}\right)^{\frac{1}{p-1}} \rho(x)^{-\frac{n-p}{p-1}}+o\left(\rho(x)^{-\frac{n-p}{p-1}} \right)
\end{align}
on $E_i$ as $\rho(x)\to +\infty$ for every $i=1, \ldots, N$.
\end{theorem}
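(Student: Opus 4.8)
\textit{Proof strategy.} The plan is to analyse $u$ one end at a time, blowing down the manifold at infinity and matching the limit against the explicit $p$-harmonic functions on the model cone. Throughout write $\beta = (n-p)/(p-1)$, so the claimed rate is $\rho^{-\beta}$. On the exact cone $(C(L_i),\hat g)$ with $\hat g=\dd\rho^2+\rho^2 g_{L_i}$, the $p$-Laplacian of a radial function $v(\rho)$ is $\rho^{-(n-1)}\p_\rho(\rho^{\,n-1}\abs{\p_\rho v}^{p-2}\p_\rho v)$, so $c\,\rho^{-\beta}$ is $p$-harmonic for every constant $c$ precisely because $(1+\beta)(p-1)=n-1$. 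The whole point is then to show that, far out on $E_i$, the potential is asymptotic to one such model solution and to pin down the correct $c$. As a first step I would establish the a priori estimates
\[
C^{-1}\rho^{-\beta}\le u\le C\rho^{-\beta},\qquad \abs{\D u}\le C\rho^{-\beta-1}
\]
on $E_i$ for $\rho$ large: the two-sided bound from the conserved flux together with the scale-invariant Harnack inequality for $p$-harmonic functions, which is available because \eqref{eq:Ricci-intro} yields volume doubling and a Poincar\'e inequality on large balls; the gradient bound is the corresponding Cheng--Yau-type estimate. Since only $\CC^0$ control on $g$ is at hand, all elliptic estimates must be of divergence-form, De Giorgi--Nash--Moser type.

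Next comes the blow-down. For $R\to+\infty$ I would consider the rescaled manifolds $(M,R^{-2}g)$ together with the rescaled potentials $u_R=R^{\beta}u$, which are $p$-harmonic for $R^{-2}g$. By asymptotic conicity $(M,R^{-2}g)$ converges in $\CC^0_{\loc}$, away from the vertex, to $(C(L_i),\hat g)$, with coefficients of the rescaled operators uniformly continuous. The a priori estimates then give uniform $\CC^{1,\alpha}_{\loc}$ bounds for $u_R$ on annuli $\{1/2\le\rho\le 2\}$, so that, up to subsequences, $u_R\to u_\infty$ in $\CC^{1}_{\loc}$, where $u_\infty$ is a positive $p$-harmonic function on $C(L_i)\smallsetminus\{o\}$ obeying $C^{-1}\rho^{-\beta}\le u_\infty\le C\rho^{-\beta}$ and carrying a nonremovable singularity at the vertex.

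The constant is then fixed by flux conservation. The $p$-flux $\int_{\{u=s\}\cap E_i}\abs{\D u}^{p-1}\dd\sigma$, independent of $s$ by the divergence theorem, is \emph{exactly} scale-invariant under $g\mapsto R^{-2}g$, $u\mapsto R^{\beta}u$, the exponents cancelling once more because $(1+\beta)(p-1)=n-1$. Passing to the $\CC^1_{\loc}$ limit identifies this flux with $\int_{\{\rho=1\}}\abs{\D u_\infty}^{p-1}\dd\sigma$. Granting that $u_\infty=c\,\rho^{-\beta}$ is radial (see below), the latter equals $\beta^{p-1}c^{p-1}\abs{L_i}$, and comparison with the definition \eqref{eq:pcapi} of $\Cn^{(i)}_p(\Omega)$ together with the identity $\AVR(g;E_i)=\abs{L_i}/\abs{\mathbb S^{n-1}}$ (which follows from asymptotic conicity and \eqref{eq:avrend}) yields $c^{p-1}=\Cn^{(i)}_p(\Omega)/\AVR(g;E_i)$. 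In particular $c$ is independent of the subsequence, so $\rho^{\beta}u\to c$ uniformly on $E_i$, which is exactly \eqref{eq:p-cap_asymptotic_behaviour_C0}.

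The hard part is the classification invoked above: that every blow-down limit $u_\infty$ is the radial solution $c\,\rho^{-\beta}$, rather than one carrying non-radial homogeneous modes decaying at the same rate. On the exact cone this is a Liouville/removable-singularity statement for the $p$-Laplacian at the vertex, to the effect that the monopole part dominates while the higher multipole moments are washed out in the rescaling. For $p=2$ it reduces to the eigenfunction expansion on $L_i$, the non-radial modes all decaying strictly faster than $\rho^{-(n-2)}$; for general $p$ it rests on a Kichenassamy--V\'eron-type analysis of isolated singularities \cite{Kichenassamy1986}, adapted to the conical background and to the weak $\CC^0$ regularity. This nonlinear rigidity, together with propagating De Giorgi--Nash--Moser estimates through the merely $\CC^0$-convergent rescaled metrics, is where the main work lies.
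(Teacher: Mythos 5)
Your overall architecture (two-sided Li--Yau bounds plus a Cheng--Yau gradient bound, blow-down to the asymptotic cone, classification of the limit, identification of the constant by a conserved quantity) is exactly the skeleton of the paper's proof. However, there are two genuine gaps. The first and most serious is the classification of blow-down limits, which you correctly identify as "where the main work lies" but then defer to "a Kichenassamy--V\'eron-type analysis of isolated singularities adapted to the conical background". No such off-the-shelf result exists for a general Riemannian cone $(0,+\infty)\times L$ with $\Ric_{\hat g}\geq 0$: Kichenassamy--V\'eron treat isolated singularities in flat $\R^n$, and for $p\neq 2$ there is no eigenfunction expansion to fall back on. The paper supplies this rigidity (its Proposition 3.3) by an elementary but essential comparison argument: one introduces the eccentricity $\ecc_v(t)=R(t)/r(t)$ of the smallest annulus $[r(t),R(t)]\times L$ containing the level set, shows via radial barriers and the comparison principle that $\ecc_v$ is nonincreasing and scale invariant, performs a \emph{second} rescaling $s\to 0^+$ to produce a limit with constant eccentricity equal to $\sup_t\ecc_v(t)$, and then uses the strong comparison principle to force that constant to be $1$. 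Without this step (or a substitute) your proof is incomplete.

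The second gap is your claim of uniform $\CC^{1,\alpha}_{\loc}$ bounds and $\CC^{1}_{\loc}$ subsequential convergence of the rescaled potentials under a merely $\CC^{0}$-Asymptotically Conical metric. Interior $\CC^{1,\alpha}$ estimates for the $p$-Laplacian carry constants depending on the H\"older seminorms of the coefficients, and the rescaled metrics $g_{(s)}=s^{-2}\omega_s^*g$ converge to $\hat g$ only uniformly: their $\CC^{0,\alpha}$ seminorms need not stay bounded as $s\to+\infty$ (indeed the paper only obtains uniform $\CC^{1,\beta}$ bounds after strengthening the hypothesis to $\CC^{0,\alpha}$-Asymptotically Conical, in the last step of its proof). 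This matters because you use the $\CC^1_{\loc}$ convergence to pass the flux $\int_{\{u=s\}}\abs{\D u}^{p-1}\dd\sigma$ to the limit and so identify the constant $c$. The paper circumvents both difficulties at once: equicontinuity comes from the Cheng--Yau gradient bound alone, the limit is recognised as $p$-harmonic through the compactness theorem for uniformly convergent $p$-harmonic functions, and the constant $\gamma$ is pinned down not by a flux over level sets but by squeezing $\capa_p(\{v\leq s_k\})$ between capacities of cross-sections $\{\rho\leq s_k/(1\pm\varepsilon)\}$ and invoking the scaling law $\Cn_p(\Omega_t)=t^{p-1}\Cn_p(\Omega)$ together with the computation of $\capa_p(\{\rho\leq r\})$ on the asymptotic cone --- all of which are stable under mere uniform convergence. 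You should either strengthen your hypotheses to get genuine gradient convergence or replace the flux-matching step by such a capacity comparison.
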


When the $\CC^{0}$-Asymptotically Conical condition is strengthen to $\CC^{k}$-Asymptotically Conical condition, it is not difficult to deduce corresponding asymptotic behaviours for the derivatives up to order $k$ of $u$ (see \cref{thm:asymptoticbehaviourofpotential} below). Our result extends to the nonlinear setting the asymptotic analyses carried out in \cite[Theorem 2.2]{Agostiniani2020}, \cite[Lemma 2.2]{Agostiniani2021}, \cite[Lemma 4.1]{Hirsch2020} and \cite[Lemma A.2]{Mantoulidis2020}, although without refined estimates of the error terms. It would be interesting to deal, building on \cref{thm:asymptoticbehaviourofppotentialC0}, with such remainders, possibly following the lines of \cite{Chrusciel1990}.

We emphasise that \cref{thm:asymptoticbehaviourofppotentialC0} plays a crucial role in the recent proof of the Riemannian Penrose Inequality through $p$-harmonic potentials proposed in \cite{Agostiniani2022} and that it will be employed to provide new results in this field under milder asymptotic conditions. To this end, we point out that the requirements above do not involve explicit rates of decay to the reference metric, that are usually assumed when dealing with these topics.

%  Another, straightforward consequence of our result, coupled with the Cheeger-Gromoll splitting theorem that ensures the uniqueness of the end, is the following asymptotic behaviour on complete Riemannian manifolds with nonnegative Ricci curvature.

% \begin{corollary}[Asymptotic behaviour of the $p$-capacitary potential - $\Ric\geq0$]\label{thm:asymptoticbehaviourofpotential-nnRic}
% Let $(M, g)$ be a $\CC^{k,\alpha}$-Asymptotically Conical Riemannian manifold for some $k\in \N$ and $\alpha >0$ with $\Ric\geq 0$. Consider $\Omega \subseteq M$ be an open bounded subset with smooth boundary and $u:M\smallsetminus \Omega \to \R$ a solution of the problem \eqref{pb-intro}. Then
% \begin{align}\label{eq:asymptoticRicNN}
% &\abs{\D^\ell u(x)-\left(\frac{\mathrm{C}_p(\Omega)}{\AVR(g)}\right)^{\frac{1}{p-1}}\D^\ell\rho^{-\frac{n-p}{p-1}}(x)}_{\hat{g}}=o\left(\rho^{-\frac{n-p}{p-1}-\ell}\right)
% \end{align} 
% as $\d(x,o)\to +\infty$, for every $\ell\leq k+1$. Moreover, setting
% \[
% v = \left(\frac{\mathrm{C}_p(\Omega)}{\AVR(g)}\right)^{n-p} u^{-\frac{p-1}{n-p}},
% \]
%  we have
% \[
% \lim_{s \to + \infty} \frac{\abs{\{v = s\}}}{s^{n-1} \abs{\mathbb{S}^{n-1}}} = \AVR(g).
% \]
% \end{corollary}
% We stress the fact that the Asymptotically Conical assumption cannot in general be relaxed to the Euclidean Volume Growth (i.e. $\mathrm{AVR}(g) > 0$) in order to obtain a pointwise expansion of the gradient of $u$ as above, due to the existence of nonnegatively Ricci curved manifolds with Euclidean Volume Growth and with infinite topological type \cite{Menguy2000}.

\medskip

It turns out that our approach, employed for proving \cref{thm:asymptoticbehaviourofppotentialC0} and its consequences, appearing in \cref{sec:asy-p}, happen to fit also the geometric case of the weak Inverse Mean Curvature Flow starting at a bounded $\Omega \subset M$ with smooth boundary.
We briefly recall that with this notion, introduced by Huisken and Ilmanen \cite{Huisken2001} as a weak counterpart to the classical evolution by inverse mean curvature \cite{Gerhardt1990, Urbas1990}, it is indicated a Lipschitz function $w \in \mathrm{Lip}_{\mathrm{loc}}$
that satisfies 
\[
\mathrm{div}\left(\frac{\mathrm{D} w}{\abs{\mathrm{D} w}}\right) = \abs{\mathrm{D} w}
\]
on $M \setminus \overline{\Omega}$ and such that $\Omega = \{w < 0\}$ in a very geometric nonstandard weak variational sense. By the pioneering work of Moser \cite{Moser2007,Moser2008} and subsequent extensions to Riemannian manifolds \cite{Kotschwar2009, Mari2019}, the solution $w$ can also be interpreted as the locally uniform limit as $p \to 1^+$ of $-(p-1) \log u_p$, where $u_p$ is the $p$-capacitary potential of $\Omega$. 

In analogy with \cref{eq:pcapi}, we set
\begin{equation}
     \abs{\partial \Omega^*}^{(i)}= \frac{\abs{\partial\set{w\leq t}\cap E_i}}{\ee^t},
\end{equation}
for every $i=1, \ldots, N$ and every $t \geq T$, where $T$ is so chosen that $\set{w \geq T}$ contains $K$.
By means of \cite[Exponential Growth Lemma 2.6]{Huisken2001} and the divergence theorem, we have that the right hand side does not depend on $t >T$ and yields a well posed definition. As for the $p$-capacity, we have that 
\begin{equation}
\abs{\partial \Omega^*}= \sum_{i=1}^N \abs{\partial \Omega^*}^{(i)}
\end{equation}
where $\Omega^*$ is the strictly outward minimising hull of $\Omega$ \cite{Fogagnolo2020a}. We refer to \cref{sec:asy-imcf} for a  more detailed discussion.
The following theorem provides a description of the asymptotic behaviour for the weak IMCF for large times.

\begin{theorem}[Asymptotic behaviour of the Inverse Mean Curvature Flow]\label{thm:asymptotic_behaviour_of_IMCF}
Let $(M,g)$ be a complete $\CC^1$-Asymptotically Conical Riemannian manifold with Ricci curvature satisfying \cref{eq:Ricci-intro}. Let $E_1, \ldots, E_N$ be the (finitely many) ends of $M$ with respect to the compact $K$ in \cref{def:C0ACintro}. Consider $\Omega\subset M$ be an open bounded subset with smooth boundary and $w:M\smallsetminus \Omega \to \R$ be the weak Inverse Mean Curvature Flow  starting at $\Omega$. Then
\begin{align}\label{eq:asymptotic_behaviour_IMCF}
    w(x) &= \log\left(\rho(x)^{n-1}\right)- \log\left(\frac{ \abs{\partial \Omega^*}^{(i)}}{\AVR(g;E_i) \abs{ \S^{n-1}}}\right) +o\left(1 \right)
\end{align}
on $E_i$ as $\rho(x)\to +\infty$ for every $i=1, \ldots, N$.
\end{theorem}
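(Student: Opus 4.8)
The plan is to treat each end $E_i$ separately, since, as already observed, the behaviour of the flow on one end is not affected by the others. First, recall that on the exact metric cone over $L_i$, with $\hat g = \dd\rho^2 + \rho^2 g_{L_i}$, the weak Inverse Mean Curvature Flow is explicit: the coordinate slices $\set{\rho = R}$ flow by inverse mean curvature and $w_{\hat g} = (n-1)\log\rho + \mathrm{const}$, while the area of such a slice on $E_i$ equals $R^{\,n-1}\abs{L_i} = R^{\,n-1}\AVR(g;E_i)\abs{\S^{n-1}}$, using $\abs{L_i} = \AVR(g;E_i)\abs{\S^{n-1}}$. Matching this polynomial area growth with the exponential one forced by the flow is what produces the claimed constant. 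I would therefore set $F(x) = w(x) - \log(\rho(x)^{n-1})$ and aim to show that $F(x) \to -\log\!\big(\abs{\partial\Omega^*}^{(i)}/(\AVR(g;E_i)\abs{\S^{n-1}})\big)$ as $\rho(x)\to+\infty$ on $E_i$.

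The central step is to control the geometry of the level sets $\Sigma_t = \set{w = t}$ at infinity, namely to prove that they become \emph{asymptotically round}, in the sense that $\max_{\Sigma_t\cap E_i}\rho \,/\, \min_{\Sigma_t\cap E_i}\rho \to 1$ as $t\to+\infty$. This is where the strengthened $\CC^1$-Asymptotically Conical hypothesis enters, in contrast with the merely $\CC^0$ assumption of \cref{thm:asymptoticbehaviourofppotentialC0}: the flow is driven by the mean curvature of its slices, which involves first derivatives of $g$, so only $\CC^1$-closeness to $\hat g$ lets one compare the mean curvature of $\set{w=t}$ with that of the model slices and run a barrier argument for the weak flow in the sense of \cite{Huisken2001}. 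Concretely, I would build sub- and supersolutions of the form $(n-1)\log\rho + c_\pm$ out of the cone model, use $\CC^1$-closeness to absorb the error terms, and invoke the comparison principle for weak IMCF to trap $\Sigma_t$ between two coordinate slices whose radii have ratio tending to $1$.

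Once roundness is established, I would combine it with the Exponential Growth Lemma \cite[Exponential Growth Lemma~2.6]{Huisken2001}, which together with the divergence theorem gives $\abs{\partial\set{w\le t}\cap E_i} = \abs{\partial\Omega^*}^{(i)}\ee^{t}$, and with the asymptotic area formula for coordinate slices, to obtain, on $\Sigma_t\cap E_i$,
\begin{equation*}
\rho^{\,n-1} = \frac{\abs{\partial\Omega^*}^{(i)}}{\AVR(g;E_i)\abs{\S^{n-1}}}\,\ee^{t}\,(1+o(1)).
\end{equation*}
Taking logarithms and solving for $t = w$ yields exactly \cref{eq:asymptotic_behaviour_IMCF}, the $o(1)$ arising from the roundness defect and the $\CC^1$-error of the metric.

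As an alternative, and as a consistency check, one may instead pass to the limit $p\to 1^+$ in \cref{thm:asymptoticbehaviourofppotentialC0}, using $w = \lim_{p\to1^+}-(p-1)\log u_p$ (see \cite{Moser2007,Kotschwar2009,Mari2019}); a direct computation turns \cref{eq:p-cap_asymptotic_behaviour_C0} into the right-hand side of \cref{eq:asymptotic_behaviour_IMCF}, provided one knows that $\Cn^{(i)}_p(\Omega)\to \abs{\partial\Omega^*}^{(i)}/\abs{\S^{n-1}}$ as $p\to1^+$. The obstruction in this route, and the main difficulty overall, is uniformity: the asymptotics of \cref{thm:asymptoticbehaviourofppotentialC0} hold for each fixed $p$ as $\rho\to+\infty$, so interchanging the two limits $p\to1^+$ and $\rho\to+\infty$ would require estimates uniform in $p$ near $1$. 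I expect the direct geometric argument above, resting on the comparison principle and the exponential growth lemma, to be the cleaner path, with the asymptotic roundness of the level sets as the genuine technical heart of the proof.
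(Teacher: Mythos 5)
Your skeleton is the right one --- the heart of the matter is indeed the asymptotic roundness of the level sets, and the constant is then read off from the Exponential Growth Lemma together with the area of coordinate slices --- but the mechanism you propose for roundness does not close. Barriers of the form $(n-1)\log\rho+c_\pm$ combined with the weak comparison principle only trap $\set{w=t}$ between two coordinate slices whose radii have ratio bounded by $\ee^{(c_+-c_-)/(n-1)}$, a fixed constant strictly larger than $1$; this is exactly the two-sided estimate of \cref{thm:weak_imcf_AC_to_Riccinonneg}, and it does not improve to ratio $\to 1$ as $t\to+\infty$, because the weak comparison principle preserves inclusions but never strictly improves them. The paper has to work harder at precisely this point: it first shows that the eccentricity $R(t)/r(t)$ of the smallest annulus containing $\set{w=t}$ is monotone nonincreasing, then performs a blowdown along the rescalings $w_s=w\circ\omega_s-(n-1)\log s$ (this is where the $\CC^1$ hypothesis actually enters --- it is needed for the gradient bound of \cref{prop:Cheng-Yau-Improved}, hence for equi-Lipschitzness and Arzel\`a--Ascoli, not for comparing mean curvatures of slices), and finally rules out a blowdown limit with constant eccentricity larger than $1$ by a \emph{strong} comparison argument. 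Since the weak flow does not obey a strong comparison principle directly, this last step requires perturbing the extremal coordinate slices to smooth starshaped strictly mean-convex domains, invoking long-time existence of the smooth starshaped IMCF on the cone and the Smooth Flow Lemma to identify it with the weak flow, and only then applying strong comparison for smooth flows. None of this is replaceable by the barrier argument you describe.

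Two smaller points. First, in the identification of the constant you compare $\abs{\partial\set{w\le t}\cap E_i}$ with the area of nearby coordinate slices; this needs the slices $\set{\rho\le r}$ to be strictly outward minimising (\cref{lem:cross-sections-SOM}) so that perimeter is monotone under inclusion --- squeezing $\set{w\le t}$ between two slices does not by itself control its perimeter. Second, your assessment of the $p\to1^+$ route is accurate: the paper does not pursue it either, for exactly the uniformity reason you give, and \cref{eq:convergence_p-cap_potential_SOMH} serves only as an a posteriori consistency check.
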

It might be useful to observe that, coherently, multiplying by $-(p-1)$ the logarithm of the right hand side of \cref{eq:p-cap_asymptotic_behaviour_C0} and letting $p \to 1^+$ one exactly recovers the right hand side of \cref{eq:asymptotic_behaviour_IMCF}. In fact, as a consequence of \cite[Theorem 1.2]{Fogagnolo2020a}, we have that
\begin{equation}
    \label{eq:convergence_p-cap_potential_SOMH}
    \lim_{p \to 1^+} \mathrm{C}^{(i)}_p (\Omega) = \frac{\abs{\partial \Omega^*}^{(i)}}{\abs{\Sf^{n-1}}}
\end{equation}
holds for every $i=1, \ldots, N$ (see \cref{lem:convergenceipcap} below). Observe that, differently from \cref{thm:asymptoticbehaviourofppotentialC0} we required $\CC^1$-convergence of the metric. This additional requirement is due to the fact that, up to the authors' knowledge, a Cheng-Yau-type estimate with sharp decay for the gradient of the IMCF is not known. Therefore, in our proof we use the gradient bound \cite[Weak Existence Theorem 3.1]{Huisken2001}. In some cases, for example $\Ric \geq 0$, this requirement can be weakened in favour of the $\CC^0$-convergence (see the discussion before \cref{prop:Cheng-Yau-Improved}). To our knowledge,  \cref{thm:asymptotic_behaviour_of_IMCF} with the explicit constant in the expansion  \eqref{eq:asymptotic_behaviour_IMCF} was known only in the flat case of $\R^n$, and was obtained by completely different means. Indeed, in this setting, the level sets of the weak IMCF become starshaped (and thus smooth) after a sufficiently long time, as a consequence of \cite[Theorem 2.7]{Huisken2008}. At this point, \cref{eq:asymptotic_behaviour_IMCF} could be easily deduced by classical results \cite{Gerhardt1990, Urbas1990} for the smooth IMCF. It is worth pointing out that the arguments we employ got an important inspiration also from those in the proof of \cite[Blowdown Lemma 7.1]{Huisken2001}, that actually helped also in establishing \cref{thm:asymptoticbehaviourofppotentialC0}. \cref{thm:asymptotic_behaviour_of_IMCF} provided a simplified approach to the proof of \cite[Blowdown Lemma 7.1]{Huisken2001}. At the same time extend such result to the class of Asymptotically Conical Riemannian manifolds, adding the explicit characterisation of the constant term in the expansion \eqref{eq:asymptotic_behaviour_IMCF}.

\smallskip
The results above in particular apply to Asymptotically Locally Euclidean spaces (ALE for short) {\em gravitational instantons}, that are noncompact hyperkh\"aler Ricci Flat $4$-dimensional manifolds playing a role in the study of Euclidean Quantum Gravity Theory, Gauge Theory and String Theory (see~\cite{Hawking1977,Eguchi1979,Kronheimer1989,Kronheimer1989a,Minerbe2009,Minerbe2010,Minerbe2011}). Moreover, it is not difficult to realise that the completeness assumption can be dropped, and that the above results can be extended to manifolds with boundary. Indeed, the method proposed is completely blind to everything is inside $\Omega$, and, therefore, we can also include Asymptotically Flat Riemannian manifolds $(M, g)$ with compact boundary $\partial M$, of fundamental importance in General Relativity.
%, and obtain the asymptotic expansion of \cref{thm:asymptoticbehaviourofpotential} for the solution $u$ to 
% \begin{equation}
% \label{eq_asyflat}
% \begin{cases}[rcl@{\ \ }l]
% \Delta^{\!(p)}_g u &=&0 &\text{ on $M \smallsetminus \partial M$,}\\
% u&=&1 & \text{ on $\partial M$,}\\
% u(x) &\to& 0 &\text{ as $\d(x , o)\to +\infty$.}
% \end{cases}
% \end{equation}
% In this context, it has been shown in \cite{Kotschwar2009, Mari2019} extending the paper by R. Moser \cite{Moser2007} to curved settings, that $w_p = -(p-1) \log u_p$, with $u_p$ solution to \cref{eq_asyflat}, converges as $p \to 1^+$ to a weak solution $w$ of the Inverse Mean Curvature Flow  evolving from $\partial M$. This notion has been crafted in the seminal \cite{Huisken2001}, where the existence was shown through a different technique. In such paper, precisely at \cite[Lemma 7.1]{Huisken2001}, the authors work out the $\CC^0$-asymptotics for $w$. Our main result can thus be read as a version of this study for the $p$-harmonic approximators, carried out in a broader class of manifolds and to any derivative of $u$. 
% Moreover 
\smallskip

\bigskip

\noindent\textbf{Summary.} The paper is organised as follows. In \cref{main-sec}, we recall some basic notions about $p$-harmonic functions and the $p$-capacitary potential as well as an improvement of Li-Yau-type estimates holding true on Asymptotically Conical Riemannian manifolds satisfying the bound \cref{eq:Ricci-intro}, with controlled constants as $p\to 1^+$. \cref{sec:asy-p} and \cref{sec:asy-imcf} are devoted to the proof of the asymptotic behaviour of the $p$-capacitary potential  and of the (weak) IMCF, that are respectively \cref{thm:asymptoticbehaviourofppotentialC0} and \cref{thm:asymptotic_behaviour_of_IMCF}, together with some other related results. In the last section we prove the sharpness of the Minkowski Inequality in Asymptotically Conical Riemannian manifolds with nonnegative Ricci curvature and its rigidity statement in the general Euclidean volume growth case.

\medskip

%%%

\noindent\textbf{Acknowledgements.} 
\emph{The authors are grateful to V. Agostiniani, F. Oronzio, G. Antonelli for precious discussions and comments during the preparation of this manuscript. The authors are members of Gruppo Nazionale per l'Analisi Matematica, la Probabilit\`a e le loro Applicazioni (GNAMPA).}

\section{Preliminary results on \texorpdfstring{$p$}{p}-capacitary potentials}
\label{main-sec}

\subsection{\texorpdfstring{$p$}{p}-harmonic functions and regularity} \label{reg-subsec}
Before considering the specific case of problem \eqref{pb-intro}, we recall the definition of $p$-harmonic functions, as well as their regularity estimates. Given an open subset $U$ of a complete Riemannian manifold $(M, g)$, we say that $v \in W^{1, p}(U)$ is $p$-harmonic if 
\begin{equation}
\label{def-p-harm}
\int\limits_U \left\langle\abs{\D v}^{p-2} \D v \, \vert \, \D \psi\right\rangle \dd \mu =0.
\end{equation}
for any test function $\psi \in \CC_c^\infty(U)$. With $\langle\, \cdot \, \vert \, \cdot\, \rangle$ we denote as usual the scalar product induced by the underlying Riemannian metric $g$ on the tangent space at each point.
  
We now precisely recall what is known about the regularity of $p$-harmonic functions. 
Suppose by now that $K\Subset U$ is entirely contained in a chart of $M$. For any $k \in \N$ and $\alpha \in (0,1)$, we have that
\begin{align}\label{eq:uniformellipticity}
    g_{ij}(x)\xi^i\xi^j>\lambda \delta_{ij}\xi^i\xi^j && \norm{g_{ij}}_{\CC^{k,\alpha}(K)}<\Lambda_{k,\alpha}
\end{align}
holds for every $x\in K$ and $\xi \in \R^n$ for some positive constants $\lambda$ and $\Lambda_{k,\alpha}$. Regularity results for $p$-harmonic functions (see \cite{Tolksdorf1983,DiBenedetto1983,Lieberman1988}) yield
\begin{align}
\abs{\D v(x)} \leq \Cn  &&\abs{\D v(x)-\D v(y)} \leq \Cn \,\left(\vphantom{\sum} \d(x,y)\right)^\beta
\end{align}
for every $p$-harmonic function $v$ with $\abs{v} \leq 1$, with $\Cn$, $\beta$ depending only on $n$, $p$, $\alpha$, $\lambda$ and $\Lambda_{0,\alpha}$ and the distance of $K$ from the boundary of $U$. For a general $K$, since a compact set can be finitely covered by charts, the result can be extended defining, with abuse of notation, $\lambda$ and $\Lambda_{k,\alpha}$ as the minimal $\lambda$ and maximal $\Lambda_{k,\alpha}$ in \eqref{eq:uniformellipticity} among a family of charts covering $K$. Hence, the theorem below easily follows by a scaling argument, being that $v/\norm{v}_{L^\infty(U)}$ is $p$-harmonic as well.

\begin{theorem}[Schauder interior estimates]\label{thm:regularityp-potential}
Let $(M,g)$ be a complete Riemannian manifold of dimension $n$, $U\subseteq M$ be an open subset and let $1<p$. For any $\alpha\in(0,1)$ and $K\Subset U$, there exists a positive constant $\beta=\beta(n,p,\alpha, \Lambda_{0,\alpha}, \lambda)\in (0,\alpha)$ such that any bounded solution $v:U \to \R$ of the problem $\Delta_g^{\!(p)} v =0$ on $U$ belongs to $\CC^{1, \beta}(K)$. Moreover, there is a positive constant $\Cn=\Cn(n,p,\d(K,\partial U), \Lambda_{0,\alpha}, \lambda)$ such that
\begin{align}
\abs{\D v(x)} \leq \Cn \norm{v}_{L^\infty(U)} &&\abs{\D v(x)-\D v(y)} \leq \Cn \norm{v}_{L^\infty(U)}\,\left(\vphantom{\sum} \d(x,y)\right)^\beta
\end{align}
for any $x,y\in K$.
\end{theorem}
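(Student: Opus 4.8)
The plan is to reduce the statement to the standard local regularity theory for $p$-harmonic functions, which has already been recalled in the excerpt just before the theorem. The key observation is that the theorem claims only an \emph{interior} estimate on a compact set $K \Subset U$ with constants depending on $n$, $p$, $\d(K, \partial U)$, and the ellipticity data $\lambda$, $\Lambda_{0,\alpha}$; it is not a global statement. Thus the entire content is to package the cited results of Tolksdorf, DiBenedetto, and Lieberman into a coordinate-free statement on a manifold, together with a scaling normalisation.

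First I would normalise. Given a bounded solution $v$ of $\Delta_g^{(p)} v = 0$ on $U$, observe that $\tilde v := v / \norm{v}_{L^\infty(U)}$ is again $p$-harmonic (the $p$-Laplacian is positively homogeneous of degree $p-1$, so scaling by a constant preserves the equation) and satisfies $\abs{\tilde v} \leq 1$. Hence it suffices to prove the estimate for solutions bounded by $1$, obtaining $\abs{\D \tilde v} \leq \Cn$ and the Hölder bound with a bare constant; multiplying through by $\norm{v}_{L^\infty(U)}$ then gives the stated inequalities. This is exactly the scaling remark made in the paragraph preceding the theorem.

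Next I would handle the passage from Euclidean charts to the manifold. The cited regularity theorems are stated in $\R^n$ for divergence-form equations with measurable, uniformly elliptic, and Hölder-continuous coefficients. On a manifold, fix first the case where $K \Subset U$ lies in a single coordinate chart: in that chart the equation $\div_g(\abs{\D v}_g^{p-2}\D v) = 0$ becomes a Euclidean equation of the admissible structural type, whose ellipticity and coefficient regularity are controlled precisely by the constants $\lambda$ and $\Lambda_{0,\alpha}$ in the uniform ellipticity condition \eqref{eq:uniformellipticity}. The cited results then yield $\tilde v \in \CC^{1,\beta}(K)$ with $\beta = \beta(n,p,\alpha,\Lambda_{0,\alpha},\lambda) \in (0,\alpha)$, together with the interior gradient and gradient-Hölder bounds, where the constant also depends on the distance of $K$ to $\partial U$. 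For a general compact $K$, cover it by finitely many charts, define $\lambda$ as the minimum and $\Lambda_{k,\alpha}$ as the maximum of the corresponding local constants over this finite family (as the excerpt already stipulates), and patch the local estimates using the finite overlap; a standard chaining argument along a path in $K$ upgrades the local Hölder bounds to the global one on $K$.

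The only genuine subtlety — and what I expect to be the main obstacle — is not the $p$-harmonicity \emph{per se} but ensuring the constants are controlled \emph{uniformly} and depend only on the claimed quantities, in particular that $\beta$ and $\Cn$ do not secretly depend on finer coordinate data than $n$, $p$, $\alpha$, $\lambda$, $\Lambda_{0,\alpha}$, and $\d(K,\partial U)$. This requires quoting the structural regularity theorems in the form where the constants are explicit functions of the ellipticity constants and the $\CC^{0,\alpha}$-norm of the coefficients, which is available in \cite{Tolksdorf1983, DiBenedetto1983, Lieberman1988}. Once this uniform dependence is in hand, the proof is essentially a citation followed by the scaling normalisation, so the theorem follows immediately from the discussion preceding it, as the excerpt's phrase ``\emph{the theorem below easily follows by a scaling argument}'' already anticipates.
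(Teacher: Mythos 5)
Your proposal is correct and follows essentially the same route as the paper: reduce to the case $\abs{v}\leq 1$ by the homogeneity of the $p$-Laplacian, work in a single chart where the equation has the structural form covered by \cite{Tolksdorf1983, DiBenedetto1983, Lieberman1988} with constants controlled by $\lambda$ and $\Lambda_{0,\alpha}$, and then cover a general $K$ by finitely many charts taking the extremal ellipticity constants. The paper treats this exactly as you anticipate — as a citation plus the scaling normalisation — so there is nothing to add.
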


On the other hand, the classical regularity theory for quasilinear nondegenerate elliptic equations ensures that Sobolev functions satisfying \eqref{def-p-harm} are smooth around the points where the gradient does not vanish (see \cite[Chapter 4, Section 6]{Ladyzhenskaia1968}). Moreover, the classical elliptic regularity theory can be applied to get higher order interior estimates (see \cite[Theorem 6.6]{Gilbarg2015}).

\begin{theorem}[Higher order Schauder estimates]\label{thm:moreregularityp-potential}
Let $(M,g)$ be a Riemannian manifold of dimension $n$, $U \subseteq M$ be an open smooth subset of $M$ and consider  $1<p<n$. Then for any $k \in\N$, $\beta \in (0,1)$ and $K\Subset U$, and every bounded solution $v :U \to \R$ of the problem $\Delta_g^{\!(p)} v =0$ on $U$ such that $\abs{ \D v} \geq m > 0$ on $K$, there exists a constant $\Cn_{k,\beta} =\Cn_{k,\beta} (n,p,  \d(K, \partial U), \Lambda_{k-1,\beta}, \lambda,m)$ such that
\begin{align}
\norm{v}_{\CC^{k,\beta}(K)}\leq \Cn_{k,\beta}\norm{v}_{L^\infty(U)}.
\end{align}
\end{theorem}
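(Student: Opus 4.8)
The plan is to exploit the fact that, on the region where the gradient does not vanish, the $p$-harmonicity of $v$ is equivalent to a \emph{uniformly elliptic, nondegenerate} quasilinear equation in nondivergence form, to which the classical Schauder theory and a bootstrap argument apply. By the homogeneity of the equation (if $v$ is $p$-harmonic so is $v/\norm{v}_{L^\infty(U)}$, exactly as used before \cref{thm:regularityp-potential}) we may assume $\norm{v}_{L^\infty(U)}=1$, so that it suffices to produce a bound $\norm{v}_{\CC^{k,\b}(K)}\leq \Cn_{k,\b}$. Expanding $\div_g(\abs{\D v}^{p-2}\D v)=0$ and dividing by the factor $\abs{\D v}^{p-2}$, which is licit precisely because $\abs{\D v}\geq m>0$, one rewrites the equation pointwise as
\begin{equation*}
A^{ij}(x,\D v)\,\p_i\p_j v + b(x,\D v)=0,
\end{equation*}
where the principal coefficients $A^{ij}(x,\xi)=g^{ij}(x)+(p-2)\abs{\xi}_g^{-2}\,\xi^i\xi^j$ (with $\xi=\D v$ and indices raised through $g$) are uniformly elliptic, their ellipticity constants lying between $\min\{1,p-1\}$ and $\max\{1,p-1\}$ times those of $g$, hence depending only on $p$ and $\la$ and \emph{not} on $m$; the term $b(x,\D v)$ collects the first-order contributions and depends on the first derivatives of the metric. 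The role of the lower bound $\abs{\D v}\geq m$ is exactly to keep $\abs{\D v}^{-2}$, and therefore all the coefficients, as regular as $v$ itself allows.

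First I would fix the working neighbourhoods. By \cref{thm:regularityp-potential} we have $v\in\CC^{1,\b}_{\loc}(U)$, so $\D v$ is continuous; since $\abs{\D v}\geq m$ on the compact $K$, by continuity there is an open set $K'$ with $K\Subset K'\Subset U$ on which $\abs{\D v}\geq m/2$. I then fix a finite nested family $K=K_{k-1}\Subset K_{k-2}\Subset\cdots\Subset K_1\Subset K_0=K'$, surrendering at each inclusion a definite fraction of $\d(K,\p U)$. The base of the induction is the bound $v\in\CC^{1,\b}(K_0)$ already furnished by \cref{thm:regularityp-potential}.

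The core is then the inductive step, run for $j=1,\ldots,k-1$. Assume $v\in\CC^{j,\b}(K_{j-1})$ with a bound depending on the admissible data. On $K_{j-1}$ the gradient is still $\geq m/2$, and $\abs{\D v}^2=g^{ij}\p_i v\,\p_j v\in\CC^{j-1,\b}$ is bounded below, whence $\abs{\D v}^{-2}\in\CC^{j-1,\b}$; combined with $g\in\CC^{j,\b}$ this yields $A^{ij}(\cdot,\D v)\in\CC^{j-1,\b}$ and $b(\cdot,\D v)\in\CC^{j-1,\b}$ on $K_{j-1}$, the latter consuming one derivative of the metric. The equation is now \emph{linear}, uniformly elliptic, with coefficients and right-hand side in $\CC^{j-1,\b}$, so the interior Schauder estimate \cite[Theorem 6.6]{Gilbarg2015} upgrades the solution to $v\in\CC^{j+1,\b}(K_j)$ together with the corresponding quantitative bound. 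Iterating from $j=1$ to $j=k-1$ reaches $v\in\CC^{k,\b}(K_{k-1})=\CC^{k,\b}(K)$, with a final constant depending only on $n$, $p$, $\d(K,\p U)$, $\la$, $m$, and on the metric through $\Lambda_{k-1,\b}$, which is the regularity required at the top step $j=k-1$ through the first-order term $b$.

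I expect the only delicate point to be the bookkeeping rather than any hard analysis: one must verify that the ellipticity constants and the $\CC^{j-1,\b}$-norms of $A^{ij}(\cdot,\D v)$ and $b(\cdot,\D v)$ remain uniformly controlled across all $k-1$ iterations, and in particular track that it is precisely the first-order term $b$ that forces metric regularity up to order $k-1$ — this is why the constant depends on $\Lambda_{k-1,\b}$ and not on $\Lambda_{k,\b}$. The single genuinely nonlinear ingredient is the lower bound $m$: it both legitimises the division by $\abs{\D v}^{p-2}$ and guarantees that differentiating $\abs{\D v}^{-2}$ never generates an uncontrolled singularity, so that the bootstrap can proceed beyond the $\CC^{1,\b}$-regularity of \cref{thm:regularityp-potential}, which is all one can expect where the gradient is allowed to vanish.
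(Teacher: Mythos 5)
Your argument is correct and is essentially the proof the paper has in mind: the paper gives no written proof of this statement, only the citations to Ladyzhenskaya--Ural'tseva for smoothness of solutions to nondegenerate quasilinear equations where the gradient does not vanish and to Gilbarg--Trudinger Theorem 6.6 for the linear Schauder estimates, which is exactly the nondivergence-form rewriting and bootstrap you carry out, including the correct bookkeeping of the dependence on $m$, $\lambda$ and $\Lambda_{k-1,\beta}$. The only step you elide is the initial upgrade from the $\CC^{1,\beta}$ weak solution to a classical $\CC^{2}$ solution (needed before the nondivergence-form Schauder estimate can be applied at the first iteration), which is precisely what the Ladyzhenskaya--Ural'tseva citation supplies.
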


Given $U\subseteq M$ with Lipschitz boundary, a $p$-harmonic function $u\in W^{1,p}(U)$ attains some Dirichlet data $g\in L^p(\partial U)$ if $u$ coincides with $g$ on $\partial U$ in the sense of the trace operator. We report in the next remark the issue of the boundary regularity.
\begin{remark}[Boundary regularity of $p$-harmonic functions]
\label{boundary-regularity}
We point out that, if a $p$-harmonic function attains some $\mathscr{C}^{1, \alpha}$-Dirichlet data on a $\mathscr{C}^{1,\alpha}$ boundary, then the $\mathscr{C}^{1, \beta}$-estimates of \cref{thm:regularityp-potential} can be extended up to the boundary. This is a major contribution of \cite{Lieberman1988}. Moreover, if its gradient does not vanish at the boundary the function is smooth up to the boundary and \cref{thm:moreregularityp-potential} extends as well.
\end{remark}

% Since the $\CC^{1,\beta}$-regularity is not sufficient to employ Sard's Theorem, we are going to heavily rely  on the coarea formula. We report it here for ease of further references. The statement below follows from \cite[Lemma 18.5 and Theorem 18.1]{Maggi2012} coupled with standard approximation results.

% \crefalias{enumi}{proposition}
% \begin{proposition}[Coarea formula]\label{prop:coarea}
% Let $(M,g)$ be a complete Riemannian manifold. Consider a locally Lipschitz function $v:U \to [0,+\infty)$ on some open subset $U\subseteq M$, such that $v^{-1}([a,b])$ is compact for every $[a,b]\subset (0,+\infty)$. Then the following hold:
% \begin{enumerate}[ref=\theproposition~(\arabic*),leftmargin=\parindent+6pt,align=left,labelwidth=\parindent,labelsep=6pt]
% \item $\abs{\set{v=t}\cap \Crit (v)}=0$ for almost every $t \in [0,+\infty)$.
% \item\label{item:coareaL1} for every measurable $f$ such that $f\abs{\D v}\in L^1_{\loc}(U)$ the function $f\in L^1(\set{v=t})$ for almost every $t\in(0,+\infty)$ and
% \begin{equation}\label{eq:enached_coarea}
%     \int\limits_{U} \psi(v) f \abs{ \D v} \dd \mu =\int\limits_0^{+\infty}\psi(t)\int\limits_{\set{v=t}} f \dd \sigma \dd t,
% \end{equation}
% for every $\psi$ bounded measurable function compactly supported in $(0,+\infty)$. In particular, the function
% \begin{equation}
%     t \mapsto \int\limits_{\set{v=t}} f \dd \sigma \in L_{\loc}^1(0,+\infty)
% \end{equation}
% and its equivalence class does not depend on the representative of $f$.
% \end{enumerate}
% \end{proposition}

We finally retrieve the Comparison Principles \cite[Lemma 3.1, Proposition 3.3.2]{Tolksdorf1983} by Tolksdorf, specialised for our purposes. 

% \begin{theorem}[Hopf Maximum Principle]\label{thm:HopfMax}
% Let $(M,g)$ be a complete Riemannian manifold, $U\subseteq M$ be an open bounded subset and $v\in \CC^1(\overline{U})$ be a $p$-harmonic function. Let $B$ a ball contained in $U$ such that $v>0$ in $B$ and $v(x)=0$ at some $x \in \partial B$. Then $\D v(x) \neq 0$.
% \end{theorem}

\begin{theorem}[Comparison Principles]\label{thm:comparison_principle}
Let $(M,g)$ be a complete Riemannian manifold, $U\subseteq M$ be an open bounded subset and $v_1, v_2:U \to \R$ be two $p$-harmonic functions.
\begin{itemize}[label={\tiny\raisebox{.7ex}{\textbullet}}, leftmargin=.3in]
\item \emph{(Weak) Comparison Principle.} If $v_1,v_2 \in \CC^0(\overline{U})$ and $v_1 \leq v_2$ on $\partial U$, then $v_1 \leq v_2$ on $U$.
\item \emph{Strong Comparison Principle.} Suppose in addition that $U $ is connected, $v_1 \in \CC^1(\overline{U})$, $v_2\in \CC^2(\overline{U})$ and $\abs{ \nabla v_2}\geq \delta >0$ in $U$. If $v_1 \leq v_2$ (resp. $v_1 \geq v_2$) on $U$, then $v_1 =v_2$ or $v_1 <v_2$ (resp. $v_1 >v_2$) on $U$.
\end{itemize}
\end{theorem}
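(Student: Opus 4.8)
The statement is classical and, as noted, due to Tolksdorf; the expedient route is to invoke \cite[Lemma 3.1, Proposition 3.3.2]{Tolksdorf1983} directly. Were I to reproduce the arguments, the plan would be as follows. For the \textbf{(Weak) Comparison Principle} I would run the standard energy comparison. Since $v_1,v_2\in\CC^0(\overline U)\cap W^{1,p}(U)$ with $v_1\leq v_2$ on $\partial U$, the function $\psi=(v_1-v_2)^+$ vanishes on $\partial U$ and hence lies in $W^{1,p}_0(U)$; by density it is admissible in the weak formulation \eqref{def-p-harm} for both $v_1$ and $v_2$. Subtracting the two identities gives
\[
\int_U \left\langle \abs{\D v_1}^{p-2}\D v_1 - \abs{\D v_2}^{p-2}\D v_2 \, \vert \, \D (v_1-v_2)^+ \right\rangle \dd\mu = 0 .
\]
Because $\D(v_1-v_2)^+ = \D(v_1-v_2)\,\chi_{\set{v_1>v_2}}$, the integral reduces to one over $\set{v_1>v_2}$ of $\langle A(\D v_1)-A(\D v_2)\,\vert\,\D v_1-\D v_2\rangle$, where $A(\xi)=\abs{\xi}^{p-2}\xi$. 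Strict monotonicity of $A$ makes this integrand nonnegative and forces $\D v_1=\D v_2$ wherever it vanishes; as the integral is zero, $\D(v_1-v_2)^+=0$ almost everywhere. Thus $(v_1-v_2)^+$ is locally constant, and being zero on $\partial U$ it vanishes identically, i.e. $v_1\leq v_2$ on $U$.

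For the \textbf{Strong Comparison Principle}, assume $v_1\leq v_2$ and set $w=v_2-v_1\geq 0$. The plan is to show that the relatively closed set $Z=\set{w=0}$ is also open, so that connectedness of $U$ yields the dichotomy. Fix $x_0\in Z$. Since $w\geq 0$ attains an interior minimum at $x_0$ and $v_1\in\CC^1$, $v_2\in\CC^2$, we have $\D w(x_0)=0$, hence $\D v_1(x_0)=\D v_2(x_0)$ and in particular $\abs{\D v_1(x_0)}=\abs{\D v_2(x_0)}\geq\delta$. By continuity of the gradients there is a ball $B\subset U$ about $x_0$ on which $\abs{\D v_1}$, $\abs{\D v_2}$, and every convex interpolation $(1-s)\D v_1+s\D v_2$ are bounded away from $0$ and from above. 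Writing $A(\D v_2)-A(\D v_1)=\mathcal A\,\D w$ with $\mathcal A(x)=\int_0^1 \D A\big((1-s)\D v_1(x)+s\D v_2(x)\big)\dd s$ and using $\div\big(A(\D v_2)-A(\D v_1)\big)=0$, we find that $w$ solves the linear divergence-form equation $\div(\mathcal A\,\D w)=0$ on $B$. The eigenvalues of $\D A(\xi)$ are $\abs{\xi}^{p-2}$ and $(p-1)\abs{\xi}^{p-2}$, so on $B$ the coefficient field $\mathcal A$ is uniformly elliptic with bounded continuous entries. Since the nonnegative solution $w$ vanishes at the interior point $x_0$, the Harnack/strong maximum principle for such operators forces $w\equiv 0$ on $B$, proving that $Z$ is open. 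The reverse inequality $v_1\geq v_2$ is handled symmetrically with $w=v_1-v_2$.

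The \textbf{main obstacle} lies exclusively in the strong principle, namely in taming the degeneracy of the $p$-Laplacian at critical points. The whole argument hinges on the fact that, at a touching point, the nondegeneracy hypothesis $\abs{\D v_2}\geq\delta$ combined with $\D v_1(x_0)=\D v_2(x_0)$ propagates, via continuity of the gradients, to genuine uniform ellipticity of the linearised operator on a full neighbourhood, where the classical theory applies. Away from such localisation the interpolating gradients may vanish and $\mathcal A$ degenerate, which is exactly why the hypotheses are tailored to $v_2$ nondegenerate of class $\CC^2$ and $v_1$ merely $\CC^1$.
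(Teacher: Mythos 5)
Your proposal is correct and coincides with the paper's treatment: the paper offers no proof of this statement, simply citing \cite[Lemma 3.1, Proposition 3.3.2]{Tolksdorf1983}, which is exactly your primary route. The sketch you add — monotonicity of $\xi\mapsto\abs{\xi}^{p-2}\xi$ tested against $(v_1-v_2)^+$ for the weak principle, and localisation at a touching point where the nondegeneracy of $\D v_2$ propagates to uniform ellipticity of the linearised divergence-form operator for the strong principle — is precisely Tolksdorf's argument and contains no gaps.
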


To conclude, we want to recall a compactness theorem that holds for $p$-harmonic functions. It is a natural question whether the limit of a sequence of $p$-harmonic functions is still $p$-harmonic. Clearly the weak formulation in \eqref{def-p-harm} suggests that $\CC^1$-convergence on compact subsets is enough to ensure that also the limit function is $p$-harmonic. The following theorem relaxes this hypothesis in favour of uniform convergence on compact subsets.

\begin{theorem}[Compactness Theorem] \label{thm:compactness_uniform_convergence_p_harmonic}
    Let $(v_n)_{n\in \N}$ be a sequence of $p$-harmonic functions on $U$ that converges uniformly to $v$ on compact subsets of $U$ as $n \to +\infty$. Then $v \in W^{1,p}_{\loc}$ is $p$-harmonic on $U$. 
\end{theorem}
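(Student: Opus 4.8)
The plan is to upgrade the hypothesised locally uniform convergence to $\CC^1$-convergence on compact subsets by means of the interior regularity estimates, and then to pass to the limit in the weak formulation \eqref{def-p-harm}. The only genuinely nonlinear obstruction is the passage to the limit inside the term $\abs{\D v_n}^{p-2}\D v_n$; this is exactly why establishing \emph{locally uniform convergence of the gradients}—rather than mere weak convergence in $L^p$—is the decisive point, and I expect it to be the heart of the argument.

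First I would observe that, since $v_n \to v$ uniformly on compact subsets, the sequence is locally uniformly bounded: given any $K \Subset U$, one may choose an open $U'$ with $K \Subset U' \Subset U$ for which $\sup_{n} \norm{v_n}_{L^\infty(U')} < +\infty$. Applying the interior Schauder estimates of \cref{thm:regularityp-potential} on $U'$ then produces a constant, independent of $n$, that bounds $\norm{v_n}_{\CC^{1,\beta}(K)}$. In particular the functions $v_n$ and their gradients $\D v_n$ are uniformly bounded and equi-H\"older-continuous on $K$.

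Next I would invoke the Arzel\`a--Ascoli theorem: from any subsequence of $(v_n)$ one can extract a further subsequence converging in $\CC^1(K)$ to some limit. Because $v_n \to v$ uniformly, the $\CC^0$-part of this limit must coincide with $v$; hence $v\in\CC^1(K)$ and the extracted subsequence satisfies $\D v_n \to \D v$ uniformly on $K$. Since the limit $v$ is thereby determined independently of the subsequence, the standard subsequence principle promotes this to convergence of the full sequence, so that $\D v_n \to \D v$ uniformly on every $K \Subset U$. In particular $v$ is locally Lipschitz, whence $v \in W^{1,p}_{\loc}(U)$.

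Finally, fix $\psi \in \CC_c^\infty(U)$ and set $K = \spt \psi$. Since $\D v_n \to \D v$ uniformly on $K$ with values in a fixed bounded set, and $\xi \mapsto \abs{\xi}^{p-2}\xi$ is (uniformly) continuous on bounded sets, the vector fields $\abs{\D v_n}^{p-2}\D v_n$ converge uniformly to $\abs{\D v}^{p-2}\D v$ on $K$. Passing to the limit in the identity
\[
\int_U \left\langle \abs{\D v_n}^{p-2}\,\D v_n \, \vert \, \D \psi \right\rangle \dd\mu = 0,
\]
valid for each $n$ by $p$-harmonicity, I obtain
\[
\int_U \left\langle \abs{\D v}^{p-2}\,\D v \, \vert \, \D \psi \right\rangle \dd\mu = 0,
\]
so that $v$ satisfies \eqref{def-p-harm} and is $p$-harmonic. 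As anticipated, the main obstacle is precisely this passage to the limit in the nonlinear term, which is resolved by the uniform gradient convergence furnished by \cref{thm:regularityp-potential}.
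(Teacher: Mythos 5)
Your argument is correct, but note that the paper does not actually prove this statement: its ``proof'' is a one-line citation of Theorem 3.2 in the reference of Heinonen et al., so what you have written is a genuinely different, self-contained route. The classical argument behind the cited result works at the Sobolev level: one combines local uniform boundedness with a Caccioppoli inequality to get local $W^{1,p}$ bounds, extracts a weakly convergent subsequence of gradients, and then handles the nonlinear term $\abs{\D v_n}^{p-2}\D v_n$ via the monotonicity of the $p$-Laplace operator (or via strong $L^p$ convergence of the gradients obtained by testing with $v_n-v$); this needs nothing beyond energy estimates. Your route instead imports the full strength of the $\CC^{1,\beta}$ interior estimates of \cref{thm:regularityp-potential} --- i.e.\ the DiBenedetto--Tolksdorf--Lieberman regularity theory --- which upgrades local uniform convergence to local $\CC^{1}$ convergence (via Arzel\`a--Ascoli plus the subsequence principle, correctly applied) and makes the passage to the limit in the nonlinear term elementary, since $\xi\mapsto\abs{\xi}^{p-2}\xi$ is continuous for every $p>1$. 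Given that the paper has already recorded those estimates, this is a perfectly legitimate and arguably shorter argument; the price is that it rests on much heavier machinery than the statement requires, and it would not survive in settings where $\CC^{1,\beta}$ interior regularity is unavailable but the compactness theorem still holds. The only cosmetic point worth fixing is that you should carry out the Arzel\`a--Ascoli extraction on a slightly larger open set $U'$ with $K\Subset U'\Subset U$, so that ``$v\in\CC^{1}$'' and the identity $\D v=\lim\D v_n$ are asserted on an open set rather than on the compact $K$ itself.
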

\proof See \cite[Theorem 3.2]{Heinonen1988}.\endproof

\begin{remark}\label{rmk:compactness_convergence_of_metric}
Suppose that $(U_n)_{n \in \N}$ is a sequence of open subsets converging to $U$ open subset as $n \to +\infty$. Let $g_n$ be a metric on $U_n$ for every $n \in \N$ that locally uniformly converges to some metric $g$ on $U$ as $n \to +\infty$. The above theorem still holds if $v_n$ is $p$-harmonic with respect to the metric $g_n$. Consequently, $v$ is $p$-harmonic on $U$ with respect to $g$.
\end{remark}

\subsection{\texorpdfstring{$p$}{p}-nonparabolic manifolds and the \texorpdfstring{$p$}{p}-capacitary potential}
We analyse here the existence and uniqueness of solution $u_p$ to \eqref{pb-intro} on complete Riemannian manifolds. Given a noncompact Riemannian manifold $M$, we consider the $p$-capacitary potential of a bounded set with smooth boundary $\Omega \subset M$, that is a function $u \in W^{1, p}_{\mathrm{loc}}(M \smallsetminus \Omega)$ solving \eqref{pb-intro}. The regularity results previously discussed ensure that $u$ belongs to $\CC^{1,\beta}_{\mathrm{loc}}(M\smallsetminus \Omega)$ and it is smooth near the points where the gradient does not vanish. In particular, by Hopf Maximum Principle \cite{Tolksdorf1983} the datum on $\partial \Omega$ is attained smoothly.

We now focus on some classical sufficient conditions to ensure the existence of the $p$-capacitary potential, which turns out to be related to the notion of $p$-Green's function we are going to recall.

\begin{definition}[$p$-Green's function]
Let $(M, g)$ be a complete Riemannian manifold. Let $\mathrm{Diag}(M) = \{(x, x) \in M \times M \, \vert \, x \in M\}$. For $p \geq 1$, we say that $G_p : M \times M \smallsetminus \mathrm{Diag}(M) \to \R$ is a \emph{$p$-Green's function} for $M$ if
it weakly satisfies $\Delta_p G(o, \,\cdot\,) = -\delta_o$ for any $o \in M$, where $\delta_o$ is the Dirac delta centred at $o$, that is, if it holds
\begin{equation}
\label{green-def}
\int\limits_M \Big\langle \abs{\D \, G_p{(o,\, \cdot\,)}}^{p-2} \, \D \,G_p{(o, \,\cdot\,)} \, \Big\vert \, \D \psi\Big\rangle \dd\mu =  \psi (o)
\end{equation}
for any $\psi \in \CC^{\infty}_c (M)$.
\end{definition}
The notion of $p$-Green's function immediately calls for that of $p$-nonparabolic Riemannian manifold.

\begin{definition}[$p$-nonparabolicity]\label{def:pnop}
We say that a complete noncompact Riemannian manifold $(M, g)$ is $p$-nonparabolic if, for any $o \in M$, there exists a \emph{positive} $p$-Green's function $G_p : M  \smallsetminus \{o\} \to \R$. With the expression \emph{$p$-Green function} we are in fact referring to the positive minimal one.
\end{definition} 
The notion of $p$-nonparabolicity is intimately related to existence of a solution to \eqref{pb-intro}, in that if the $p$-Green's function of a $p$-nonparabolic Riemannian manifold vanishes at infinity, then such solution exists for any open bounded subset $\Omega \subset M$ with smooth boundary.  A complete and self contained proof of this fact is provided in the Appendix of \cite{Fogagnolo2020a}. We report the statement of such basic though fundamental result.
\begin{theorem}[Existence of the $p$-capacitary potential]\label{thm:existence_of_p_potential}
Let $(M, g)$ be a complete noncompact $p$-nonparabolic Riemannian manifold. Assume also that the $p$-Green's function $G_p$ satisfies $G_p(o, x) \to 0$ as $\d(o, x) \to +\infty$ for some $o \in M$. Let $\Omega \subset M$ be an open bounded subset with smooth boundary. Then, there exists a unique solution $u_p$ to \eqref{pb-intro}. 
\end{theorem}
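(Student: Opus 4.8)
The plan is to separate the two assertions: uniqueness will be a direct consequence of the Comparison Principle, while existence I would obtain by a monotone exhaustion procedure complemented with a barrier built from the $p$-Green's function. For uniqueness, suppose $u_1,u_2$ both solve \eqref{pb-intro} and fix $\epsilon>0$. Since each $u_i\to 0$ at infinity, for $R$ large enough $u_i<\epsilon$ on $\partial B(o,R)$, while $u_1=u_2=1$ on $\partial\Omega$. As $u_2+\epsilon$ is again $p$-harmonic, the Weak Comparison Principle of \cref{thm:comparison_principle} on the bounded domain $B(o,R)\smallsetminus\overline\Omega$ gives $u_1\leq u_2+\epsilon$ there; letting $R\to+\infty$ and then $\epsilon\to 0^+$ yields $u_1\leq u_2$, and exchanging the roles gives equality.

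For existence, I would fix a smooth bounded exhaustion $\overline\Omega\subset\Omega_1\Subset\Omega_2\Subset\cdots$ with $\bigcup_j\Omega_j=M$, and on each annular domain $A_j=\Omega_j\smallsetminus\overline\Omega$ solve the Dirichlet problem with $u_j=1$ on $\partial\Omega$ and $u_j=0$ on $\partial\Omega_j$. Existence and uniqueness of each $u_j$ follow from the direct method applied to the strictly convex $p$-energy, and a truncation argument forces $0\leq u_j\leq 1$. By the Comparison Principle the sequence is monotone nondecreasing in $j$ (comparing $u_{j+1}$ with $u_j$ on $A_j$, where $u_{j+1}\geq 0=u_j$ on $\partial\Omega_j$), so it converges pointwise to some $u$ with $0\leq u\leq 1$. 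The interior gradient estimates of \cref{thm:regularityp-potential} make $\{u_j\}$ locally bounded in $\CC^{1,\beta}$, hence the convergence is locally uniform and, by \cref{thm:compactness_uniform_convergence_p_harmonic}, the limit $u$ is $p$-harmonic on $M\smallsetminus\overline\Omega$; the uniform boundary estimates recalled in \cref{boundary-regularity} provide equicontinuity up to $\partial\Omega$, so that $u=1$ there.

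It remains to check the decay $u\to 0$ at infinity, and this is where the two standing hypotheses enter and where I expect the only genuine difficulty. I would place the pole $o\in\Omega$, so that the positive minimal $p$-Green's function $G_p(o,\cdot)$ is $p$-harmonic on $M\smallsetminus\overline\Omega$, vanishes at infinity by assumption, and attains a strictly positive minimum $m=\min_{\partial\Omega}G_p(o,\cdot)>0$ on the compact set $\partial\Omega$. Then $v=G_p(o,\cdot)/m$ is $p$-harmonic with $v\geq 1=u_j$ on $\partial\Omega$ and $v>0=u_j$ on $\partial\Omega_j$, so the Comparison Principle gives $u_j\leq v$ on each $A_j$ and hence $u\leq v$ on $M\smallsetminus\overline\Omega$. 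Since $v\to 0$ at infinity, so does $u$, and the same bound guarantees $u\not\equiv 1$, so the limit is a genuine nontrivial solution. The delicate point throughout is that the exhausting solutions could a priori converge to the constant $1$ (which is exactly what happens on $p$-parabolic manifolds); it is precisely the existence of a decaying Green's function, that is, the combination of $p$-nonparabolicity with the vanishing assumption, that rules this out and simultaneously forces the correct behaviour at infinity.
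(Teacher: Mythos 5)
The paper does not prove this theorem itself: it defers entirely to the Appendix of \cite{Fogagnolo2020a}, and the argument there is essentially the one you propose — uniqueness by comparison with $u_2+\epsilon$ on large balls, existence by monotone exhaustion with Dirichlet data $1$ and $0$, local $\CC^{1,\beta}$ compactness, and the rescaled minimal Green's function as an upper barrier forcing the decay at infinity. Your write-up is correct, with one small point to address: the hypothesis only gives $G_p(o,\cdot)\to 0$ for \emph{some} $o\in M$, whereas your barrier requires a pole lying inside $\Omega$; you should either record that the vanishing at infinity is independent of the pole (compare $G_p(o',\cdot)$ with a constant multiple of $G_p(o,\cdot)$ on the boundary of a ball containing both poles and use the minimality of the Green's function along an exhaustion), or run the comparison on $A_j\smallsetminus B(o,r)$, where $u_j\leq 1$ is dominated by the suitably rescaled $G_p(o,\cdot)$ on the extra boundary component near its singularity.
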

We want to underline that, the existence of a $p$-Green's function does not guarantee that it vanishes at infinity. This last property is related to the geometry of all ends. We refer the reader to \cite{Holopainen1990,Holopainen1999} for a detailed discussion on this topic.

It is convenient to recall here the definition of $p$-capacity of an open bounded subset $\Omega \subset M$ together with a normalised version of it which turns out to be more convenient for our computations.
\begin{definition}[$p$-capacity and normalised $p$-capacity]\label{def:capandnormpcap}
Let $(M, g)$ be a complete noncompact Riemannian manifold, and let $\Omega$ be an open bounded subset of $M$.
The \emph{$p$-capacity} of $\Omega$ is defined as
\begin{equation}
\label{cap}
\mathrm{Cap}_p (\Omega) = \inf \set{\,\int\limits_M \abs{\D v}^p \dd \mu\st v \in \CC_c^\infty(M),\, v \geq 1\text{ on } \Omega}.
\end{equation}
On the other hand, the \emph{normalised $p$-capacity} of $\Omega$ is defined as 
\begin{equation}\label{eq:normalisedp-cap}
\Cn_p(\Omega)= \inf \set{\left(\frac{p-1}{n-p}\right)^{p-1}\frac{1}{\abs{\S^{n-1}}}\int\limits_M \abs{\D v}^p \dd \mu\st v \in \CC_c^\infty(M),\, v \geq 1\text{ on } \Omega}.
\end{equation}
\end{definition}

% Given an end $E\subset M$ and a set $\Omega \subset M$ such that $ \partial E \subset \Omega$ we define the capacity of $\Omega$ restricted to $E$ as 
% \begin{equation}
%     \capa_p(\Omega;E) = \inf \set{\,\int\limits_E \abs{\D v}^p \dd \mu\st v \in \CC_c^\infty(M),\, v \geq 1\text{ on } \Omega}.
% \end{equation}
% % This capacity shares all property with the one in the previous proposition.

A function $u$ solving \eqref{pb-intro} realises the $p$-capacity of the initial set $\Omega$, and actually one can also characterise such quantity with a suitable integral on $\partial \Omega$. We resume these facts in the following statements, whose proof can be found in \cite[Section 2.2]{Benatti2021}

\begin{proposition}
\label{cap-u-prop}
Let $(M, g)$ be a complete noncompact $p$-nonparabolic Riemannian manifold. Assume also that the $p$-Green's function $G_p$ satisfies $G_p(o, x) \to 0$ as $\d(o, x) \to +\infty$ for some $o \in M$. Let $\Omega \subset M$ be an open bounded subset with smooth boundary. Then the solution $u_p$ to \eqref{pb-intro} realises
\begin{equation}
\label{cap-u}
\Cn_p(\Omega) = \left(\frac{p-1}{n-p}\right)^{p-1} \frac{1}{\abs{\S^{n-1}}} \int\limits_{M \smallsetminus \overline{\Omega}} \abs{\D u_p}^p \dd \mu
\end{equation}
Moreover, we have that
\begin{equation}
\label{p-cap-u}
\Cn_p(\Omega) = \left(\frac{p-1}{n-p}\right)^{p-1} \frac{1}{\abs{\S^{n-1}}} \int\limits_{\set{u_p=1/t}} \abs{\D u_p}^{p-1} \dd\sigma.
\end{equation}
holds for almost every $t\in [1,+\infty)$, including any $1/t$ regular value for $u_p$. 
\end{proposition}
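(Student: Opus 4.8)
The plan is to prove the two displayed identities separately: \eqref{cap-u} as a variational energy-minimisation statement, and \eqref{p-cap-u} as a consequence of the divergence theorem combined with the coarea formula. Throughout I exploit that $u_p$ exists, is $\CC^{1,\beta}_{\loc}$, decays at infinity, and has finite $p$-Dirichlet energy by \cref{thm:existence_of_p_potential} and the hypothesis $G_p(o,\,\cdot\,)\to 0$.

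For \eqref{cap-u}, I would show that $u_p$, extended by the constant $1$ on $\overline{\Omega}$, is exactly the minimiser of the energy in \eqref{cap}. The device is the pointwise convexity inequality $\abs{\xi}^p\geq \abs{\eta}^p+p\,\abs{\eta}^{p-2}\langle \eta, \xi-\eta\rangle$ for tangent vectors $\xi,\eta$. Given any competitor $v\in\CC^\infty_c(M)$ with $v\geq 1$ on $\Omega$, after truncation one may assume $0\leq v\leq 1$ and $v=1$ on $\overline{\Omega}$; then $\phi:=v-u_p$ vanishes on $\partial\Omega$ and is compactly supported, hence is an admissible test function in the weak formulation \eqref{def-p-harm}, yielding $\int \abs{\D u_p}^{p-2}\langle \D u_p,\D \phi\rangle \dd\mu=0$. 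Applying the convexity inequality with $\xi=\D v$, $\eta=\D u_p$ and integrating, the cross term vanishes and gives $\int_M \abs{\D v}^p\dd\mu\geq \int_{M\smallsetminus\overline{\Omega}}\abs{\D u_p}^p\dd\mu$. Taking the infimum over $v$ produces one inequality; the reverse follows by approximating $u_p$ in energy by admissible compactly supported functions, which is where the decay of $G_p$ (equivalently, that $u_p$ belongs to the correct energy space) is used. Dividing by $\big(\tfrac{p-1}{n-p}\big)^{p-1}\abs{\S^{n-1}}^{-1}$ then gives \eqref{cap-u}.

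For \eqref{p-cap-u}, I would use that the vector field $X:=\abs{\D u_p}^{p-2}\D u_p$ is divergence-free on $M\smallsetminus\overline{\Omega}$, since $u_p$ is $p$-harmonic. Fixing two regular values $s_1<s_2$ of $u_p$ (which form a full-measure set by Sard's theorem on the region where $\D u_p\neq 0$) and applying the divergence theorem to $X$ on $\{s_1<u_p<s_2\}$, one observes that on a level set $\{u_p=s\}$ the unit normal is $\pm\D u_p/\abs{\D u_p}$, so the flux reduces, up to sign, to $\int_{\{u_p=s\}}\abs{\D u_p}^{p-1}\dd\sigma$; the vanishing divergence then forces this quantity to be independent of the regular value $s$, which already shows the right-hand side of \eqref{p-cap-u} (and of \eqref{eq:pcapi}) is well defined. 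To identify it with the energy, I would invoke the coarea formula, writing $\abs{\D u_p}^p=\abs{\D u_p}^{p-1}\abs{\D u_p}$ and slicing over the range $(0,1)$ of $u_p$ to get $\int_{M\smallsetminus\overline{\Omega}}\abs{\D u_p}^p\dd\mu=\int_0^1\big(\int_{\{u_p=s\}}\abs{\D u_p}^{p-1}\dd\sigma\big)\dd s$. Since the inner integral equals the same constant for a.e.\ $s\in(0,1)$, integrating over a unit interval returns that constant, and comparison with \eqref{cap-u} yields \eqref{p-cap-u}.

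The main obstacle is the analytic bookkeeping on the noncompact manifold rather than the formal computation. One must justify that $u_p$ genuinely lies in the energy space in which the infimum \eqref{cap} is attained and can be approximated by admissible test functions; this is handled by the exhaustion construction underlying \cref{thm:existence_of_p_potential}, realising $u_p$ as a limit of potentials on bounded annuli whose energies converge, together with the Green's function decay hypothesis. One must also control the critical set $\{\D u_p=0\}$, which is dealt with by restricting the divergence-theorem argument to regular values and noting that the coarea formula assigns no mass to critical levels, so that almost-everywhere constancy of the slice integral suffices both for \eqref{p-cap-u} and for the well-posedness of the definition in \eqref{eq:pcapi}.
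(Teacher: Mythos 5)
Your argument is correct and is the standard one: the paper itself does not prove \cref{cap-u-prop} but defers to \cite[Section 2.2]{Benatti2021}, where precisely this combination of the convexity inequality $\abs{\xi}^p\geq\abs{\eta}^p+p\abs{\eta}^{p-2}\langle\eta,\xi-\eta\rangle$ (to identify the infimum in \eqref{eq:normalisedp-cap} with the energy of $u_p$) and of the weak divergence theorem plus the coarea formula (to obtain \eqref{p-cap-u}) is carried out. The only point requiring genuine care, which you correctly flag, is that $\phi=v-u_p$ is not compactly supported, so the vanishing of the cross term must be justified by cutoffs using the finiteness of the energy and the decay of $u_p$ coming from the exhaustion construction behind \cref{thm:existence_of_p_potential}; likewise the a.e.\ constancy of $t\mapsto\int_{\set{u_p=1/t}}\abs{\D u_p}^{p-1}\dd\sigma$ is cleanest via testing with $\chi_\epsilon(u_p)$ and coarea rather than via Sard, since $u_p$ is only $\CC^{1,\beta}$ across its critical set.
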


In particular, evaluating \eqref{p-cap-u} at $t=1$, which is a regular value by the Hopf Maximum Principle, we have that
\begin{equation}
\label{cap-boundary}
\Cn_p (\Omega) = \left(\frac{p-1}{n-p}\right)^{p-1} \frac{1}{\abs{\S^{n-1}}} \int\limits_{\partial \Omega} \abs{\D u_p}^{p-1} \dd\sigma.
\end{equation}

Moreover, one can actually relate the capacity of $\Omega_t= \set{u>1/t}\cup \Omega$ to the capacity of $\Omega$.

\begin{proposition}\label{prop:scaling-invariant-cap}
Let $(M, g)$ be a complete noncompact $p$-nonparabolic Riemannian manifold, for some $p > 1$. Let $\Omega \subset M$ be an open bounded subset with smooth boundary. Then the solution $u_p$ to \eqref{pb-intro} realises
\begin{equation}\label{eq:scaling-invariant-cap}
    \Cn_p(\Omega_t)= t^{p-1}\Cn_p(\Omega)
\end{equation}
for every $t \in [1,+\infty)$, where $\Omega_t= \set{u>1/t}\cup \Omega$. In particular, the map $t\mapsto \Cn_p(\Omega_t)$ is smooth.
\end{proposition}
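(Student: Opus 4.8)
The plan is to identify the $p$-capacitary potential of $\Omega_t$ as an explicit rescaling of $u$, and then to read off \eqref{eq:scaling-invariant-cap} from the boundary representation of the capacity obtained in \cref{cap-u-prop}. The key structural fact is that the $p$-Laplacian is positively $(p-1)$-homogeneous, namely $\Delta^{\!(p)}_g(\lambda v)=\lambda^{p-1}\Delta^{\!(p)}_g v$ for every constant $\lambda>0$; in particular, multiplying a $p$-harmonic function by a positive constant produces again a $p$-harmonic function. First I would fix $t\in[1,+\infty)$ and consider $v:=t\,u$ on the exterior domain $M\smallsetminus\overline{\Omega_t}=\set{u<1/t}$. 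Since $u=1$ on $\partial\Omega$ and $1/t\leq 1$, we have $\partial\Omega_t=\set{u=1/t}$, and one checks at once that $v$ is $p$-harmonic on $\set{u<1/t}$, that $v\equiv 1$ on $\partial\Omega_t$, and that $v\to 0$ at infinity because $u$ does. Hence $v$ solves problem \eqref{pb-intro} relative to $\Omega_t$, and so it is the $p$-capacitary potential $u_{\Omega_t}$ of $\Omega_t$.

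For uniqueness on the unbounded exterior domain, I would invoke the (weak) Comparison Principle of \cref{thm:comparison_principle} on the bounded sets $\set{\epsilon<u<1/t}$: two solutions of the $\Omega_t$-problem agree on the inner boundary $\set{u=1/t}$ and both stay below $\eta(\epsilon):=\sup_{\set{u=\epsilon}}\max\{v,\tilde v\}$ on the outer boundary, with $\eta(\epsilon)\to 0$ as $\epsilon\to0^+$ by the common decay at infinity; letting $\epsilon\to0^+$ forces the two solutions to coincide. Once $u_{\Omega_t}=v=t\,u$ is established, I would apply the boundary representation \eqref{cap-boundary} to $u_{\Omega_t}$. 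Using $\abs{\D v}=t\abs{\D u}$ together with $\partial\Omega_t=\set{u=1/t}$, this yields
\[
\Cn_p(\Omega_t)=\left(\frac{p-1}{n-p}\right)^{p-1}\frac{1}{\abs{\S^{n-1}}}\int\limits_{\set{u=1/t}}t^{p-1}\abs{\D u}^{p-1}\dd\sigma = t^{p-1}\,\Cn_p(\Omega),
\]
where the last equality is exactly \eqref{p-cap-u} for $u$ evaluated at the level $1/t$. Since the right-hand side $t^{p-1}\Cn_p(\Omega)$ is a smooth function of $t$, the asserted smoothness of $t\mapsto\Cn_p(\Omega_t)$ is then immediate.

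The hard part will be the regularity bookkeeping at the non-regular values of $u$. The representations \eqref{p-cap-u} and \eqref{cap-boundary}, as well as the smoothness of $\partial\Omega_t$ needed to call $\Omega_t$ a domain with smooth boundary, are only guaranteed when $1/t$ is a regular value of $u$, which by Sard's theorem holds for almost every $t$. To upgrade the identity to \emph{all} $t\in[1,+\infty)$ I would exploit that $t\mapsto\Cn_p(\Omega_t)$ is monotone nondecreasing, since $\Omega_t$ increases with $t$; a monotone function can differ from the continuous function $t\mapsto t^{p-1}\Cn_p(\Omega)$ on at most a countable set, and as the two agree on the dense set of regular values they must agree everywhere. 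This simultaneously establishes \eqref{eq:scaling-invariant-cap} for every $t$ and the smoothness of $t\mapsto\Cn_p(\Omega_t)$.
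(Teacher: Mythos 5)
Your proposal is correct and follows the argument the paper relies on (the proposition is stated without an explicit proof, with the surrounding facts referred to \cite[Section 2.2]{Benatti2021}): identify $u_{\Omega_t}=t\,u$ via the $(p-1)$-homogeneity of $\Delta^{\!(p)}_g$ and uniqueness from the Comparison Principle on exhausting annuli, then read off \eqref{eq:scaling-invariant-cap} from the level-set representation \eqref{p-cap-u}. Your final upgrade from regular values to all $t$ is also sound, though the cleaner justification is the squeeze: for $s<t<r$ with $1/s,1/r$ regular values one has $s^{p-1}\Cn_p(\Omega)=\Cn_p(\Omega_s)\leq\Cn_p(\Omega_t)\leq\Cn_p(\Omega_r)=r^{p-1}\Cn_p(\Omega)$ by monotonicity of capacity under inclusion, and letting $s\uparrow t$, $r\downarrow t$ along regular values gives the identity everywhere.
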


\noindent {\em From now on, unless where it is necessary, we fix $1<p$ and we drop the subscript $p$ when we consider a solution $u_p$ to the problem \eqref{pb-intro}.}

Given an end $E\subset M$ and a set $\Omega \subset M$ such that $ \partial E \subset \Omega$ we define the capacity of $\Omega$ restricted to $E$ as 
\begin{equation}
    \capa_p(\Omega;E) = \inf \set{\,\int\limits_E \abs{\D v}^p \dd \mu\st v \in \CC_c^\infty(M),\, v \geq 1\text{ on } \Omega}.
\end{equation}
The set function $\capa_p(\,\cdot\,;E)$ has the same properties of the standard $p$-capacity in \cref{def:capandnormpcap}.

\subsection{Preliminary properties of Asymptotically Conical manifolds}
\label{subsec:prel}

We give here the precise definition of Asymptotically Conical Riemannian manifolds according to \cite{Chodosh2017}. For better comprehension, we recall the definition of the H\"{o}lder seminorm of a tensor field. A tensor field  $T \in \mathcal{T}_s^q(M)$ is $\alpha$-H\"{o}lder continuous at $x$ for some $\alpha\in[0,1]$ if there exists a geodesically convex open neighbourhood $U_x$ centred at $x$ such that 
\begin{align}
\sup_{y\in U_x \smallsetminus \set{x}} \frac{\abs{T(x)-T(y)}_{g}}{\left(\d(x,y)\right)^\alpha}<+\infty
\end{align}
is finite, where, to compute the difference between $T(x)$ and $T(y)$, we parallel transport $T(y)$ onto $x$. The tensor field $T$ is said to be $\alpha$-H\"{o}lder continuous on $U\subset M$ if it is $\alpha$-H\"{o}lder continuous at every $x\in U$. We sometimes omit the subscript $g$ if it is clear the metric we are referring to.

Consider a \textit{cone with link $L$}, namely $((0,+\infty)\times L, \hat{g})$ where $\hat{g}=\dd \rho^2 + \rho^2 g_L$. In this case, let $s>0$ be such that $B_s(x)$ is geodesically convex in $((0,+\infty)\times L, \hat{g})$ for every $x \in\set{1} \times L $. Then, for every $x \in  (0,+\infty)\times L$ the ball of radius $s\rho(x)$ centred at $x$ is still geodesically convex, where $\rho: (0,+\infty)\times L\to (0,+\infty)$ is the projection onto the first coordinate. Given an $\alpha$-H\"{o}lder continuous tensor field $T$, we define the $\alpha$-H\"{o}lder seminorm of $T$ at $x$ as 
\begin{align}
\seminorm{T}^{(s)}_{\alpha,\hat{g}}(x)=\sup_{y\in B_{s\rho(x)}(x) \smallsetminus \set{x}} \frac{\abs{T(x)-T(y)}_{\hat{g}}}{\left(\dist(x,y)\right)^\alpha}.
\end{align}
Observe that, if $T$ is bounded (with respect to $\abs{}_{\hat{g}}$) and $s,t>0$ satisfy the above assumptions, $\seminorm{T}^{(s)}_{\alpha,\hat{g}}(x)=\seminorm{T}^{(t)}_{\alpha,\hat{g}}(x)$ for any $x \in (R,+\infty)\times L$ provided $R$ is large enough. Then, the following definition is well-posed and we can drop the superscript $(s)$.

\begin{definition}[$\CC^{k,\alpha}$-Asymptotically Conical Riemannian manifolds] \label{def:C0ACintro}
Let $(M,g)$ be a Riemannian manifold, $k \in \N$ and $\alpha\in [0,1)$. $M$ is said to be \emph{$\CC^{k,\alpha}$-Asymptotically Conical}  if there exists an open bounded subset $K\subseteq M$, a closed smooth hypersurface $L$ and a diffeomorphism $\pi:M\smallsetminus K\to [1,+\infty)\times L$ such that
\begin{equation}
\label{eq:C0AC-topology}
\sum_{i=0}^k\rho^{i}\abs{\D^{(i)}_{\hat{g}} (\pi_* g - \hat{g})}_{\hat{g}}+ \rho^{k+\alpha}\seminorm{\D^{(k)}_{\hat{g}} (\pi_* g - \hat{g})}_{\alpha,\hat{g}}=o(1) ,
\end{equation}
as $\rho \to +\infty$, where $\rho:[1,+\infty) \times L \to [1,+\infty)$ is the projection map onto the first component and $\hat{g}=\dd \rho^2 +\rho^2 g_L$ is the cone metric. We use the convention $\CC^{k}= \CC^{k,0}$.
\end{definition}

\cref{def:C0ACintro} says that in a $\CC^{k,\alpha}$-Asymptotically Conical Manifold the metric $g$ approaches the metric $\hat{g}$ of a \textit{truncated cone with link $L$} with respect to a scaling invariant $\CC^{k,\alpha}$-norm. The diffeomorphism $\pi:M \smallsetminus K \to [1,+\infty)\times L$ identifies the boundary of $K$ with the link $L$. With abuse of notation, $\pi_*\rho:M\smallsetminus K\to [1,+\infty)$ will be denoted by $\rho$ and $\pi_*\hat{g}=\dd \rho ^2 + \rho^2 g^{}_{L}$ by $\hat{g}$. Moreover, by convention the set $\set{\rho<1}$ is used to denote $K$ and accordingly $\set{\rho \leq r} =M \smallsetminus \set{\rho>r}$ and $\set{1 \leq \rho \leq r} = M \smallsetminus ( \set{\rho>r}\cup K)$. Given any coordinate system $(\vartheta^1, \ldots, \vartheta^{n-1})$ on an open subset $U$ of $L$, $(\rho, \vartheta^1, \ldots, \vartheta^{n-1})$ are coordinates on $(1,+\infty) \times U\subset M\smallsetminus \overline{K}$. The condition $\abs{g- \hat{g}}_{\hat{g}} = o(1)$ as $\rho \to +\infty$ is equivalent to a condition on the coordinates that can be read as
\begin{align}
    g_{\rho \rho}=1+o(1)&& g_{\rho j}=o(\rho) && g_{ij}= \rho^2 g^L_{ij}+ o (\rho^2)
\end{align}
for every $i,j =1, \ldots,n-1$ as $\rho \to +\infty$. By using Cramer's rule to solve the system and Laplace expansion to compute determinants, we obtain 
\begin{align}
    g^{\rho \rho}=1+o(1)&& g^{\rho j}=o(\rho^{-1}) && g^{ij}= {\rho^{-2}} g_L^{ij}+ o (\rho^{-2}).
\end{align}
The $\CC^{0,\alpha}$-Asymptotically Conical condition for $\alpha>0$ gives, in addition, information on the H\"{o}lder seminorm of the components. Indeed, arguing as before we get that 
\begin{align}
  \seminorm{g_{\rho\rho}-1}_{\alpha,\hat{g}} = o(\rho^{-\alpha})&&\seminorm{g_{\rho j}}_{\alpha,\hat{g}}=o(\rho^{1-\alpha})&&\seminorm{g_{ij} - \rho^2 g^L_{ij}}_{\alpha,\hat{g}} = o(\rho^{2-\alpha})
\end{align}
for every $i,j=1,\ldots, n-1$ as $\rho \to +\infty$. Increasing $k$ in the $\CC^{k,\alpha}$-Asymptotically Conical assumption we gain knowledge about the $k$-th derivative of the components of $g$. Increasing $k$ in the $\CC^{k,\alpha}$-Asymptotically Conical assumption we gain knowledge about the $k$-th derivative of $(g-\hat{g})$. 

Consider for every $s > 0$ the family of diffeomorphism on $(0,+\infty) \times L$ defined as
\begin{equation}\label{eq:dilatation}
\begin{array}{rlcl}
\omega_s:&(0,+\infty)\times L&\longrightarrow&(0,+\infty) \times L\\[.2cm]
&(\rho, \vartheta^1,\ldots,\vartheta^{n-1}) &\longmapsto&(s\rho,\vartheta^1,\ldots,\vartheta^{n-1}),
\end{array}
\end{equation}
With abuse of language we will also denote by $\omega_s$ any restriction of it to some truncated cone. Since $\omega_s$ induces a family of diffeomorphisms from $(1/s, +\infty) \times L$ onto $\set{\rho \geq 1} \subset M$ through the composition with $\pi$ in \cref{def:C0ACintro}, we will also denote by $\omega_s$ such map. The condition \eqref{eq:C0AC-topology} can be also interpreted as the convergence of the family of metrics on the cone $(0,+\infty)\times L$, built for every $s\geq 1$ by pulling the metric $g$ back through the diffeomorphism $\omega_s$ and properly rescaling them. This is the content of the following lemma.

\begin{lemma}\label{lem:convergence}
A complete Riemannian manifold $(M,g)$ is $\CC^{k,\alpha}$-Asymptotically Conical if and only if the metric $g_{(s)}=s^{-2}\omega_s^*g$ satisfies
\begin{equation}
\sum_{i=0}^k\rho^i\abs{\D^{(i)}_{\hat{g}} (g_{(s)} - \hat{g})}_{\hat{g}}+ \rho^{k+\alpha}\seminorm{\D^{(k)}_{\hat{g}} (g_{(s)} - \hat{g})}_{\alpha,\hat{g}} =o(1),
\end{equation}
as $s\to +\infty$ on $[R,+\infty) \times L$ for every $R>0$.
\end{lemma}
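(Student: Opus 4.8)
The plan is to prove both implications simultaneously by establishing a single pointwise scaling identity: for every point $x$ of the cone and every $s>0$, the quantity on the left-hand side of the lemma evaluated for $g_{(s)}$ at $x$ coincides with the quantity on the left-hand side of \eqref{eq:C0AC-topology} evaluated for $g$ at the dilated point $\omega_s(x)$. Everything then reduces to the fact that $\rho(\omega_s(x)) = s\,\rho(x)$, so that sending $s\to+\infty$ with $x$ in a fixed region is the same as sending $\rho\to+\infty$.

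First I would record the homogeneity of the cone metric. Writing $T = \pi_* g - \hat{g}$ and using $\omega_s^*(\dd\rho^2) = s^2\dd\rho^2$ together with $\omega_s^*(\rho^2 g_L) = s^2\rho^2 g_L$, one gets $\omega_s^*\hat{g} = s^2\hat{g}$; hence $\hat{g} = s^{-2}\omega_s^*\hat{g}$ and consequently
\[
g_{(s)} - \hat{g} = s^{-2}\omega_s^* g - s^{-2}\omega_s^*\hat{g} = s^{-2}\omega_s^* T .
\]
In particular $\omega_s$ is a homothety of the cone $((0,+\infty)\times L,\hat{g})$ scaling $\hat{g}$-lengths by $s$, so that $\abs{\dd\omega_s(v)}_{\hat{g}} = s\abs{v}_{\hat{g}}$ and $\dist(\omega_s x,\omega_s y) = s\,\dist(x,y)$.

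Next I would exploit the three structural features of a homothety. Being an affine map for the Levi-Civita connection of $\hat{g}$, the map $\omega_s$ commutes with covariant differentiation, $\D^{(i)}_{\hat{g}}(\omega_s^* S) = \omega_s^*(\D^{(i)}_{\hat{g}} S)$, and it preserves $\hat{g}$-geodesics and $\hat{g}$-parallel transport, carrying each geodesic ball $B_r(x)$ onto $B_{sr}(\omega_s x)$. Since $\dd\omega_s$ multiplies $\hat{g}$-norms of vectors by $s$, for any $(0,m)$-tensor $S$ one obtains the pointwise identities
\[
\abs{\omega_s^* S}_{\hat{g}}(x) = s^{m}\abs{S}_{\hat{g}}(\omega_s x),\qquad \seminorm{\omega_s^* S}_{\alpha,\hat{g}}(x) = s^{m+\alpha}\seminorm{S}_{\alpha,\hat{g}}(\omega_s x).
\]
The first is immediate from the dual scaling of $\dd\omega_s$; for the second I would combine the intertwining of $\hat{g}$-parallel transport with $\dd\omega_s$ (which turns the compared difference at $x$ into $s^{m}$ times the corresponding difference at $\omega_s x$), the distance scaling $\dist(x,y) = s^{-1}\dist(\omega_s x,\omega_s y)$ contributing the extra factor $s^{\alpha}$, and the fact that $\omega_s$ carries the ball $B_{s'\rho(x)}(x)$ of the seminorm definition onto $B_{s'\rho(\omega_s x)}(\omega_s x)$, so that the two suprema are taken over corresponding sets.

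Applying these with $S = \D^{(i)}_{\hat{g}} T$ (a $(0,i+2)$-tensor) and using $\D^{(i)}_{\hat{g}}(g_{(s)} - \hat{g}) = s^{-2}\omega_s^*(\D^{(i)}_{\hat{g}} T)$ gives
\[
\rho(x)^{i}\abs{\D^{(i)}_{\hat{g}}(g_{(s)} - \hat{g})}_{\hat{g}}(x) = (s\rho(x))^{i}\abs{\D^{(i)}_{\hat{g}} T}_{\hat{g}}(\omega_s x) = \rho(\omega_s x)^{i}\abs{\D^{(i)}_{\hat{g}}(g - \hat{g})}_{\hat{g}}(\omega_s x),
\]
and likewise for the Hölder term, where the weight $k+\alpha$ exactly absorbs the factor $s^{k+\alpha}$. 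Summing over $i$, the full left-hand side of the lemma at $x$ equals the full left-hand side of \eqref{eq:C0AC-topology} at $\omega_s x$. To conclude, if $M$ is $\CC^{k,\alpha}$-Asymptotically Conical then for $x\in[R,+\infty)\times L$ the radius $\rho(\omega_s x) = s\rho(x)\geq sR$ tends to $+\infty$ uniformly as $s\to+\infty$, so the lemma's expression is $o(1)$; conversely, writing a point of large radius as $z = \omega_{\rho(z)}(1,\theta)$ with $(1,\theta)\in\{1\}\times L$ and applying the hypothesis (uniform on $[1,+\infty)\times L$, hence on the slice $\{1\}\times L$) with $s = \rho(z)\to+\infty$ recovers \eqref{eq:C0AC-topology}. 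The point that requires care is precisely the Hölder-seminorm identity above: verifying that parallel transport intertwines with $\dd\omega_s$ and that the defining balls correspond exactly, which rests on the homothety property together with the earlier remark ensuring that the seminorm is independent of the convexity-radius superscript.
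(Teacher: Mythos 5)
Your proof is correct and follows essentially the same route as the paper's: the identity $s^{-2}\omega_s^*\hat{g}=\hat{g}$, the commutation of $\omega_s^*$ with $\D_{\hat{g}}$, and the scalings $\abs{\omega_s^*S}_{\hat{g}}(x)=s^m\abs{S}_{\hat{g}}(\omega_s x)$ and $\dist(\omega_s x,\omega_s y)=s\dist(x,y)$, combined with $\rho(\omega_s x)=s\rho(x)$. You simply spell out in full the pointwise scaling identities that the paper's one-paragraph proof leaves implicit, including the careful treatment of the H\"older seminorm via parallel transport and the correspondence of the defining balls.
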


\proof Since $\omega^*_s\dd \rho = r \dd \rho$ and $\omega^*_s\dd \vartheta^i= \dd \vartheta^i$, is clear that $s^{-2} \omega^*_s \hat{g} = \hat{g}$. Thus the the case of $\CC^0$-Asymptotically Conical manifold follows from algebra operations on tensors. The result for $k\in \N$ and $\alpha \in [0,1]$ follows in the same way from the fact that $\D_{\hat{g}}s^{-2}\omega^*_sg=s^{-2} \omega^*_s(\D_{\hat{g}} g)$ and $\dist_{\hat{g}}(x,y)=\dist_{\hat{g}} (\omega_s(x),\omega_s(y))/s$ for every $x,y \in (1/s,+\infty)\times L$.\endproof

We want to highlight the relation between the coordinate $\rho$ and the distance induced by $g$ on $M$.

\begin{lemma}\label{lem:asymptoticaldistance}
Let $(M,g)$ be a complete $\CC^0$-Asymptotically Conical Riemannian manifold and $o \in M$. Then
\begin{equation}\label{eq:asymptoticaldistance}
    \lim_{\d(o,x)\to +\infty} \frac{\d(o,x)}{\rho(x)}=1.
\end{equation}
\end{lemma}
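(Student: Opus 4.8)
The plan is to prove the two one-sided estimates $\limsup \d(o,x)/\rho(x) \le 1$ and $\liminf \d(o,x)/\rho(x) \ge 1$ as $\rho(x)\to+\infty$, after noting that the two limiting regimes agree: by the conventions above, $\set{\rho\le R}=\overline K\cup\set{1\le\rho\le R}$ is compact for every $R$, so $\d(o,x)\to+\infty$ forces $\rho(x)\to+\infty$, while the reverse implication will follow from the lower bound below; hence it suffices to control the ratio as $\rho(x)\to+\infty$. The two inputs I would use are the coordinate consequences of the $\CC^0$-Asymptotically Conical condition recorded just before the statement: $g_{\rho\rho}=1+o(1)$, which bounds the $g$-length of radial curves from above, and $g^{\rho\rho}=\abs{\D\rho}_g^2=1+o(1)$, which says that $\rho$ is asymptotically $1$-Lipschitz for $\d$. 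Fix $\epsilon>0$ and choose $R_0>1$ so large that $o\in\set{\rho<R_0}$ and both $\sqrt{g_{\rho\rho}}\le 1+\epsilon$ and $\abs{\D\rho}_g\le 1+\epsilon$ hold on $\set{\rho\ge R_0}$.

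For the upper bound, fix $x$ with $\rho(x)=R>R_0$ and angular coordinate $\theta_x$, so that $x=(R,\theta_x)$ in the cone chart. I would first join $o$ to the point $(R_0,\theta_x)\in\set{\rho=R_0}$: since $\set{\rho=R_0}\cong L$ is compact and $y\mapsto\d(o,y)$ is continuous, this costs at most $C_0:=\sup_{\set{\rho=R_0}}\d(o,\cdot)<+\infty$, a constant independent of $x$. Concatenating the radial segment $t\mapsto(t,\theta_x)$, $t\in[R_0,R]$, whose $g$-length is $\int_{R_0}^{R}\sqrt{g_{\rho\rho}}\,\dd t\le(1+\epsilon)(R-R_0)$, gives $\d(o,x)\le C_0+(1+\epsilon)(R-R_0)$. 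Dividing by $\rho(x)=R$ and letting $R\to+\infty$ yields $\limsup \d(o,x)/\rho(x)\le 1+\epsilon$.

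For the lower bound, let $\gamma\colon[0,1]\to M$ be any curve from $o$ to $x$ and set $t_1=\sup\set{t:\gamma(t)\in\set{\rho\le R_0}}$; this is well defined and $t_1<1$ because $o$ lies in the compact core $\set{\rho\le R_0}$ whereas $\rho(x)=R>R_0$. By continuity $\rho(\gamma(t_1))=R_0$ and $\gamma$ stays in $\set{\rho\ge R_0}$ on $[t_1,1]$, where $\abs{\D\rho}_g\le 1+\epsilon$, so
\[
R-R_0=\rho(\gamma(1))-\rho(\gamma(t_1))\le\int_{t_1}^{1}\abs{\D\rho(\dot\gamma)}\,\dd t\le(1+\epsilon)\int_{t_1}^{1}\abs{\dot\gamma}_g\,\dd t\le(1+\epsilon)\,L_g(\gamma).
\]
Taking the infimum over $\gamma$ gives $\d(o,x)\ge(R-R_0)/(1+\epsilon)$, hence $\liminf \d(o,x)/\rho(x)\ge 1/(1+\epsilon)$ and, incidentally, $\rho(x)\to+\infty\Rightarrow\d(o,x)\to+\infty$. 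Since $\epsilon>0$ is arbitrary, the two bounds combine to give $\lim \d(o,x)/\rho(x)=1$.

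The only genuine subtlety, and the step I would treat most carefully, is the lower bound: an arbitrary competitor $\gamma$ may oscillate in and out of the compact core before reaching $x$, so one cannot naively compare $\d$ with the cone distance $\d[\hat g]$ globally. The last-crossing-time device sidesteps this by isolating the final arc lying entirely in the good region $\set{\rho\ge R_0}$, on which the almost-$1$-Lipschitz estimate $\abs{\D\rho}_g\le 1+\epsilon$ is available; the full increase of $\rho$ from $R_0$ to $R$ must be accumulated on that arc alone, which is exactly what is needed. Everything else reduces to the elementary length estimates above together with the compactness of $\set{\rho\le R_0}$.
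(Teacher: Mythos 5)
Your proof is correct and follows essentially the same strategy as the paper's: both directions rest on $\abs{\D\rho}_g=1+o(1)$ (equivalently $g^{\rho\rho}=1+o(1)$) together with elementary length comparisons, the paper using the flow of $\D\rho/\abs{\D\rho}^2$ where you use the coordinate radial curve, and a geodesic from $\set{\rho=R_\varepsilon}$ where you use an arbitrary competitor. Your last-crossing-time device in the lower bound is in fact slightly more careful than the paper's argument, which implicitly assumes the minimizing geodesic stays in the region where the gradient bound holds.
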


Observe that since $K$ is compact there exist a $R>0$ such that $\d(o,x)>R$ implies $x\in M\smallsetminus K$, hence \eqref{eq:asymptoticaldistance} makes sense. Since $\pi$ is a diffeomorphism, taking the limit for $\d(o,x)\to+ \infty$ is the same of taking it for $\rho(x)\to +\infty$.

\proof Since $\abs{\D \rho }_g=1+o(1)$, for every $\varepsilon>0$ we can find $R_\varepsilon>1$ such that $1- \varepsilon \leq \abs{\D \rho}_g\leq 1+\varepsilon$ on $\set{\rho\geq R_\varepsilon}$. Pick $x \in \set{\rho\geq R_\varepsilon}$ and a curve $\gamma:[R_\varepsilon,\rho(x)]\to M$ which is the solution to the problem
\begin{equation}
    \begin{cases}
    \dot{\gamma}(s)= \frac{\D \rho}{\abs{\D \rho}^2}(\gamma(s)),\\
    \gamma(\rho(x))= x.
    \end{cases}
\end{equation}
Computing the length of $\gamma$ we get
\begin{align}
    L(\gamma) &= \int^{\rho(x)}_{R_\varepsilon}\abs{\dot{\gamma}(s) }\dd s =\int^{\rho(x)}_{R_\varepsilon}\frac{1}{\abs{\D \rho}_g}(\gamma(s)) \dd s \leq\frac{{\rho(x)}-R_\varepsilon}{1-\varepsilon},
\end{align}
which ensures that
\begin{align}
    \limsup_{\rho(x)\to+\infty}\frac{\d(o,x)}{\rho(x)}&\leq \limsup_{\rho(x)\to +\infty}\frac{L(\gamma)+2\diam( \set{\rho\leq R_\varepsilon})}{\rho(x)}\leq \frac{1}{1-\varepsilon}.
\end{align}

Conversely, consider any geodesic $\sigma:[0,L]\to M$, parametrised by arc length, joining $\sigma(0)\in\set{\rho =R_\varepsilon}$ and $\sigma(L)=x$. Then we obtain
\begin{align}
    \rho(x)-R_\varepsilon &= \int_0^{L}\ip{\D \rho, \dot{\sigma}(s)} \dd s  \leq \int_0^{L} \abs{\D \rho}_g(\sigma(s)) \dd s\leq(1+ \varepsilon)L
\end{align}
which yields
\begin{align}
    \liminf_{\rho(x)\to +\infty}\frac{\d(x,o)}{\rho(x)}&\geq \liminf_{\rho(x)\to +\infty}\frac{L- 2\diam( \set{\rho\leq R_\varepsilon})}{\rho(x)}\\&\geq \liminf_{\rho(x)\to +\infty} \frac{\rho(x)-R_\varepsilon-2\diam( \set{\rho\leq R_\varepsilon})}{(1+{\varepsilon})\rho(x)}=\frac{1}{1+{\varepsilon}}.
\end{align}
By the arbitrariness of $\varepsilon>0$ we can conclude.
\endproof

In Riemannian manifolds with nonnegative Ricci curvature, in virtue of Bishop-Gromov theorem, one can define an Asymptotic Volume Ratio since \begin{equation}
    \AVR(g)=\lim_{r \to +\infty} \frac{ \abs{B(o,R)}}{ \abs{\B^n}R^n}
\end{equation}
exists and does not depend on $o\in M$. Here we relaxed the condition on Ricci curvature so we can not apply Bishop-Gromov theorem, but on the other side we require an asymptotic behaviour for the metric, that allows to define an Asymptotic Volume Ratio as well.

\begin{lemma}\label{lem:AVRconico}
Let $(M,g)$ be a complete $\CC^0$-Asymptotically Conical Riemannian manifold. Then
\begin{equation}\label{eq:AVRconico}
    \frac{ \abs{L}}{\abs{\S^{n-1}}}=\lim_{R\to +\infty}\frac{\abs{ \set{1 \leq \rho \leq R}}}{\abs{\B^{n}}R^n}= \lim_{R \to + \infty} \frac{ \abs{\set{\rho =R}}}{\abs{\S^{n-1}} R^{n-1}}
\end{equation}
where $L$ is the link of the cone $(M,g)$ is asymptotic to.
\end{lemma}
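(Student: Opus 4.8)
The plan is to reduce both equalities to the pointwise closeness $\abs{g-\hat g}_{\hat g}=o(1)$ recorded in \cref{def:C0ACintro} and to compare the Riemannian volume measure of $g$ with that of the cone metric $\hat g$. Writing $g=\hat g+h$ with $\abs{h}_{\hat g}=o(1)$ as $\rho\to+\infty$ (uniformly in the angular variables, since $L$ is compact), the ratio of the two volume densities equals $\sqrt{\det(\mathrm{Id}+\hat g^{-1}h)}=1+o(1)$, because the $\hat g$-selfadjoint endomorphism $\hat g^{-1}h$ has operator norm $o(1)$. As $\hat g=\dd\rho^2+\rho^2 g_L$ carries the volume density $\rho^{n-1}\dd\rho\,\dd V_{g_L}$, this determines the volume element of $g$ to leading order, and both limits then reduce to elementary integrations, bearing in mind that $\abs{L}=\int_L\dd V_{g_L}$ and that $\abs{\B^n}=\abs{\S^{n-1}}/n$.

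For the rightmost limit I would first analyse the level sets $\set{\rho=R}$, whose induced metric is the tangential block $g^\top$ of $g$. Since the cone metric splits orthogonally as $\hat g=\dd\rho^2\oplus\rho^2 g_L$, passing to a $\hat g$-orthonormal frame adapted to this splitting gives $\abs{h^\top}_{\hat g^\top}\leq\abs{h}_{\hat g}=o(1)$, so the induced densities satisfy $\dd V_{g^\top}=(1+o(1))\,\dd V_{R^2 g_L}=(1+o(1))\,R^{n-1}\dd V_{g_L}$ uniformly on $L$. Integrating over $L$ gives $\abs{\set{\rho=R}}=R^{n-1}\abs{L}(1+o(1))$, and dividing by $\abs{\S^{n-1}}R^{n-1}$ yields the third equality.

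For the middle equality I would integrate the volume element over $\set{1\leq\rho\leq R}$: by Fubini and the estimate above, $\abs{\set{1\leq\rho\leq R}}=\int_1^R\rho^{n-1}\abs{L}\,(1+\delta(\rho))\,\dd\rho$ with $\delta(\rho)\to0$. Alternatively, and perhaps more transparently, one can link the two limits directly: since $\abs{\D\rho}_g=\sqrt{g^{\rho\rho}}=1+o(1)$, the coarea formula shows that the derivative of $R\mapsto\abs{\set{1\leq\rho\leq R}}$ equals $(1+o(1))\,\abs{\set{\rho=R}}$, whence the Stolz--Ces\`aro (de l'H\^opital) rule forces $\abs{\set{1\leq\rho\leq R}}/(R^n/n)$ and $\abs{\set{\rho=R}}/R^{n-1}$ to share the limit $\abs{L}$; dividing by $\abs{\B^n}R^n=\abs{\S^{n-1}}R^n/n$ then concludes.

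The determinant manipulations are routine Laplace-expansion or Schur-complement computations, and the patching of the angular coordinate charts on $L$ is automatic once one works with the coordinate-free volume densities instead of $\sqrt{\det}$ in a single chart. I therefore expect the only genuinely delicate point to be the uniformity of the $o(1)$ over the compact link $L$, together with the control of the integrated error term $\int_1^R\rho^{n-1}\delta(\rho)\,\dd\rho=o(R^n)$; the latter is handled by a standard Ces\`aro-type splitting of the integral at a large threshold $R_\varepsilon$.
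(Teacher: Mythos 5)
Your proposal is correct and follows essentially the same route as the paper: both rest on the volume-density comparison $\det(g)=(1+o(1))\det(\hat g)=(1+o(1))\rho^{2(n-1)}\det(g_L)$, an $\varepsilon$-splitting of the annulus at a large threshold $R_\varepsilon$ for the first equality, and the coarea identity $\tfrac{\dd}{\dd R}\abs{\set{1\leq\rho\leq R}}=(1+o(1))\abs{\set{\rho=R}}$ combined with de l'H\^opital for the second. The only (harmless) difference is that you additionally offer a direct estimate of the induced area of the cross-sections via the tangential block of $g-\hat g$, whereas the paper obtains $\abs{\set{\rho=R}}$ only indirectly from the volume limit; both arguments are sound.
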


\proof One can easily show that $\det(g)= \det(\hat{g})(1+o(1))=\rho^{2(n-1)}\det(g_L)(1+o(1))$. Hence, for every $\varepsilon>0$ there exists $R_\varepsilon\geq 1 $ such that
\begin{equation}
    \abs{L}(1-\varepsilon)\frac{(R^n-R_\varepsilon^n)}{n}\leq \abs{\set{R_\varepsilon \leq \rho \leq R}}\leq \abs{L}(1+ \varepsilon)\frac{(R^n-R_\varepsilon^n)}{n}
\end{equation}
Dividing each term by $\abs{\B^n}R^n $ one get
\begin{equation}
    \abs{L}(1-\varepsilon) \frac{(R^n-R_\varepsilon^n)}{n\abs{\B^n}R^n}\leq \frac{\abs{ \set{ R_\varepsilon\leq \rho \leq R}}}{\abs{\B^n}R^n}\leq \abs{L}(1+\varepsilon) \frac{(R^n-R_\varepsilon^n)}{n\abs{\B^n}R^n}.
\end{equation}
Since $\abs{\set{ 1\leq \rho\leq R_\varepsilon}}/(\abs{ \B^n}R^n) $ vanishes as $R\to +\infty$ we obtain
\begin{equation}
    (1-\varepsilon) \frac{\abs{L}}{n \abs{ \B^n}} \leq \lim_{R \to +\infty}\frac{\abs{ \set{ 1\leq \rho \leq R}}}{\abs{\B^n}R^n}\leq (1+\varepsilon) \frac{\abs{L}}{n \abs{ \B^n}}
\end{equation}
which in turn gives the first identity in \eqref{eq:AVRconico} by arbitrariness of $\varepsilon$. We turn to prove the second identity. Since De L'H\^{o}pital rule gives
\begin{equation}
    \lim_{R\to +\infty}\frac{ \frac{\dd}{\dd R} \abs{ \set{1 \leq \rho \leq R}}}{\abs{ \S^{n-1}}R^{n-1}}= \lim_{R\to +\infty}\frac{ \abs{\set{1 \leq \rho \leq R}}}{\abs{ \B^n}R^n}= \frac{\abs{L}}{\abs{\S^{n-1}}}
\end{equation}
and
\begin{equation}
    \frac{\dd}{\dd R}\abs{ \set{1 \leq \rho \leq R}} = \frac{\dd}{\dd R}\int\limits_1^R\int\limits_{\set{\rho =s}} \frac{1}{\abs{ \D \rho}} \dd \sigma \dd s = \abs{\set{\rho=R}}(1+ o(1)),
\end{equation}
we conclude the proof. \endproof

Coupling this result with \cref{lem:asymptoticaldistance} one gets that 
\begin{equation}
    \lim_{R \to +\infty}\frac{\abs{B(o,R)}}{R^n \abs{ \B^n}} =  \lim_{R\to +\infty}\frac{\abs{ \set{1 \leq \rho \leq R}}}{\abs{\B^{n}}R^n}= \frac{ \abs{L}}{\abs{\S^{n-1}}}
\end{equation}
for every $o \in M$. Hence the left hand side limit exists and does not depend on the point $o\in M$. We can finely give the following definition.

\begin{definition}\label{def:AVRconico}
Let $(M,g)$ be a complete $\CC^0$-Asymptotically Conical Riemannian manifold. The Asymptotic Volume Ratio of $(M,g)$ is defined as
\begin{equation}
    \AVR(g)= \frac{\abs{L}}{\abs{\S^{n-1}}}
\end{equation}
where $L$ is the link of the cone $(M,g)$ is asymptotic to.
\end{definition}

In this case $0<\AVR(g)$, but in general $\AVR(g)$ could exceed $1$ and $\AVR(g)=1$ does not imply that $(M,g)$ is isometric to the flat Euclidean space.

As already observed in the Introduction, a complete $\CC^0$-Asymptotically Conical Riemannian manifold is forced to have a finite number of ends.

\begin{lemma}\label{lem:finiteends}
A complete $\CC^0$-Asymptotically Conical Riemannian manifold $(M,g)$ has finitely many ends with connected boundary each of them is differomorphic to $[1,+\infty) \times L_i $ where $L_i$ is a connected component of the link of the asymptotic cone.
\end{lemma}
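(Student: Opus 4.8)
The plan is to read the ends off directly from the asymptotic structure encoded in \cref{def:C0ACintro}, the whole point being that outside the compact piece the manifold is \emph{already} split into product pieces. First I would record that, since $(M,g)$ is complete and $K$ is open and bounded, $\overline{K}$ is compact and hence $\partial K$ is a compact smooth hypersurface; under the diffeomorphism $\pi : M \smallsetminus K \to [1,+\infty)\times L$ it is identified with $\set{1}\times L \cong L$. Thus $L$ is a closed (compact, boundaryless) hypersurface, and a compact manifold has only finitely many connected components, say $L_1,\ldots,L_N$. This already pins down the candidate ends as $E_i = \pi^{-1}([1,+\infty)\times L_i)$, each diffeomorphic to $[1,+\infty)\times L_i$ via $\pi$.

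Next I would set up a compact exhaustion adapted to $\rho$. By \cref{lem:asymptoticaldistance} the function $\rho$ is proper (its sublevel sets are bounded, hence have compact closure by completeness), so $K_R := \set{\rho \le R}$ is a compact exhaustion of $M$. For every $R \ge 1$ one has
\[
M \smallsetminus K_R \;=\; \pi^{-1}\big((R,+\infty)\times L\big) \;=\; \bigsqcup_{i=1}^{N} \pi^{-1}\big((R,+\infty)\times L_i\big),
\]
where each factor $(R,+\infty)\times L_i$ is connected (a product of connected sets) and noncompact. Hence $M \smallsetminus K_R$ has exactly $N$ unbounded connected components. Since for $R' \ge R$ the piece $\pi^{-1}((R',+\infty)\times L_i)$ is contained in $\pi^{-1}((R,+\infty)\times L_i)$, the maps $\pi_0(M\smallsetminus K_{R'}) \to \pi_0(M\smallsetminus K_R)$ restrict to bijections on unbounded components. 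Passing to the inverse limit, $M$ has exactly $N$ ends, namely the $E_i$, each with boundary $\pi^{-1}(\set{1}\times L_i) \cong L_i$ connected.

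The only point requiring genuine care, as opposed to bookkeeping, is ensuring that the $N$ unbounded components really are the ends in the inverse-limit sense, i.e. that no two of them merge at infinity and that no extra end is hidden by the choice of exhaustion. This is exactly what the stability of the decomposition in $R$ provides: the $N$ pieces are pairwise disjoint and individually connected for \emph{every} $R$, so the connecting maps are bijective and the inverse limit neither collapses nor grows. The connectedness of each boundary component is then immediate from the connectedness of the corresponding $L_i$, which completes the argument.
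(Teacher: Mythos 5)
Your argument is correct and follows essentially the same route as the paper, whose proof is simply the observation that the compact link $L$ has finitely many connected components and that each end is a cone over one of them; you have merely made explicit the exhaustion-and-inverse-limit bookkeeping that the paper leaves implicit. No gap.
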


\proof Since $L$ is a compact hypersurface, it has a finite number of connected component. Each end with respect to $K$ is therefore diffeomorphic to a cone on a connected component of $L$. \endproof

As already mentioned in the Introduction, given $E_1, \ldots, E_N$ the ends of $M$, each $E_i$ is $\CC^0$-Asymptotically Conical and we can define the Asymptotic Volume ratio of $E_i$ as 
\begin{equation}
\label{eq:avrendtext}
    \AVR(g; E_i)= \lim_{R\to +\infty}\frac{\abs{B(o,R)\cap E_i}}{ \abs{\mathbb{B}^n}R^n} = \frac{ \abs{L_i}}{\abs{\S^{n-1}}},
\end{equation}
for every $i=1,\ldots, N$, where as a above $L_i$ denotes a connected component of the link of the asymptotic cone. Moreover, the Asymptotic Volume Ratio of $(M,g)$ splits as 
\begin{equation}\label{eq:avrsumtext}
    \AVR(g)= \sum_{i=1}^N\AVR(g;E_i)
\end{equation}
where $\AVR(g)>0$ is the Asymptotic Volume Ratio of $(M,g)$.

If $\Ric_L\geq (n-2) g_L$, then the cone $([1,+\infty) \times L, \hat{g})$ has nonnegative Ricci curvature and in particular $\AVR(g;E_i)\leq 1$ for every $i=1, \ldots, k$. This condition is automatically true if the manifold is $\CC^{0}$-Asymptotically Conical and $\Ric$ satisfies \eqref{eq:Ricci-intro}, thanks to the following lemma.

\begin{lemma}\label{prop:Ricci-limit-cone}
Let $(M,g)$ be a complete $\CC^0$-Asymptotically Conical Riemannian manifold. Suppose that $\Ric_g\geq -f(\dist(x,o))$ for some nonnegative smooth function $f(t)=o(1)$ as $t \to +\infty$, for some $o \in M$. Then $\Ric_{\hat{g}}\geq 0$, where $\hat{g}= \dd \rho^2 + \rho^2 g_L$ is the asymptotical conic metric of $g$ and $L$ is the link of the limit cone. In particular, $\AVR(g)\leq N$ where $N$ is the number of connected component of the link $L$.
\end{lemma}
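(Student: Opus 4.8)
The plan is to reduce the statement to a property of the link metric $g_L$ via the rescaling of \cref{lem:convergence}, and then to exploit the stability of lower Ricci bounds under convergence. The natural starting point is the standard computation of the Ricci tensor of a metric cone $\hat g = \dd\rho^2 + \rho^2 g_L$: its radial Ricci curvature vanishes, while in the directions tangent to the link one has $\Ric_{\hat g} = \Ric_{g_L} - (n-2)g_L$. Hence $\Ric_{\hat g}\geq 0$ is equivalent to the intrinsic condition $\Ric_{g_L}\geq(n-2)g_L$ on the compact hypersurface $L$, and this is the reformulation I would aim for.

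To access $g_L$, I would use the family $g_{(s)} = s^{-2}\omega_s^* g$, which by \cref{lem:convergence} converges to $\hat g$ in $\CC^0_{\loc}$ on $[R,+\infty)\times L$ as $s\to+\infty$. Since the Ricci tensor is invariant under constant rescalings of the metric and natural under pullback, $\Ric_{g_{(s)}} = \omega_s^*\Ric_g$; transporting the hypothesis $\Ric_g\geq -f(\dist(\cdot,o))g$ through $\omega_s$ and using $\omega_s^* g = s^2 g_{(s)}$ yields
\begin{equation}
\Ric_{g_{(s)}} \geq -\,s^2\, f\!\left(\dist(\omega_s(\cdot),o)\right) g_{(s)}
\end{equation}
on each annulus $\{\rho\in[a,b]\}$. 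By \cref{lem:asymptoticaldistance} one has $\dist(\omega_s(x),o) = s\rho(x)(1+o(1))$, so that the coefficient $s^2 f(\dist(\omega_s(\cdot),o))$ is governed by the rate of decay of $f$ at infinity and becomes negligible as $s\to+\infty$. Thus $(M,g_{(s)})$ carries, on fixed annuli, a Ricci lower bound whose constant tends to zero.

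The decisive step is to pass this asymptotically nonnegative Ricci bound to the limit $\hat g$. The difficulty is that \cref{lem:convergence} only provides $\CC^0$ convergence, which controls the metric but not its curvature, so one cannot simply take limits in the curvature tensor. I would circumvent this by invoking the stability of synthetic lower Ricci bounds under measured Gromov--Hausdorff convergence: the $\CC^0$ convergence yields metric-measure convergence of the annuli $(\{\rho\in[a,b]\}, g_{(s)})$ to $(\{\rho\in[a,b]\}, \hat g)$, and the vanishing of the Ricci lower bound constants forces the limit to be an $\mathrm{RCD}(0,n)$ space. Since the limit is the smooth cone, this synthetic bound upgrades to the classical $\Ric_{\hat g}\geq 0$, equivalently $\Ric_{g_L}\geq(n-2)g_L$. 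I expect this passage, reconciling the weak $\CC^0$ hypothesis with a genuinely second-order conclusion, to be the main obstacle.

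Finally, the volume statement follows by comparison geometry on the link. Once $\Ric_{g_{L_i}}\geq(n-2)g_{L_i}$ is known on each connected component $L_i$ of $L$, the Bishop--Gromov volume comparison against the round sphere (whose Ricci equals $(n-2)$) gives $\abs{L_i}\leq\abs{\mathbb{S}^{n-1}}$. Recalling from \eqref{eq:avrendtext} that $\AVR(g;E_i)=\abs{L_i}/\abs{\mathbb{S}^{n-1}}$, this reads $\AVR(g;E_i)\leq 1$, and summing over the $N$ ends through \eqref{eq:avrsumtext} yields $\AVR(g)\leq N$.
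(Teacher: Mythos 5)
Your overall route is the same as the paper's: rescale by $\omega_s$, transport the Ricci lower bound to $g_{(s)}=s^{-2}\omega_s^*g$, pass to the blow-down limit using stability of synthetic lower Ricci bounds under measured Gromov--Hausdorff convergence (the paper invokes De Philippis--Gigli to upgrade pointed GH to pointed measured GH convergence, and Gigli's stability theorem), and finish with Bishop's inequality on the link. The reduction $\Ric_{\hat g}\geq 0\Leftrightarrow \Ric_{g_L}\geq (n-2)g_L$, the identity $\Ric_{g_{(s)}}=\omega_s^*\Ric_g$, and the final volume step via \eqref{eq:avrendtext} and \eqref{eq:avrsumtext} are all correct.

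The gap is the assertion that the coefficient $s^2 f(\dist(\omega_s(\cdot),o))$ ``becomes negligible as $s\to+\infty$''. Since $\dist(\omega_s(x),o)=s\rho(x)(1+o(1))$ by \cref{lem:asymptoticaldistance}, on a fixed annulus $\set{a\leq \rho\leq b}$ this coefficient is $s^2 f\bigl(s\rho(1+o(1))\bigr)$. Under the stated hypothesis $f(t)=o(1)$ it need not even stay bounded (take $f(t)=1/\log(2+t)$), and even under the quadratic decay \eqref{eq:Ricci-intro} it converges to $(n-1)\kappa^2\rho^{-2}$ rather than to $0$; it vanishes only if $f(t)=o(t^{-2})$, which is strictly stronger than anything assumed. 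So the rescaled annuli do not acquire an almost-nonnegative Ricci lower bound, and the stability argument can at best yield $\Ric_{\hat g}\geq -\Cn\rho^{-2}\hat g$ --- a bound that \emph{every} cone over a compact link satisfies automatically, since in the link directions $\Ric_{\hat g}=\Ric_{g_L}-(n-2)g_L\geq -c\,g_L=-c\rho^{-2}\cdot\rho^2 g_L$; it therefore carries no information about the sign. The obstruction is not cosmetic: an exact cone over a small generic perturbation of the round sphere, smoothly capped off, is $\CC^\infty$-Asymptotically Conical and satisfies $\Ric_g\geq -B(1+\dist(x,o))^{-1}g$ for a suitable constant $B$, yet $\Ric_{g_L}\not\geq(n-2)g_L$ and hence $\Ric_{\hat g}\not\geq 0$; so the step cannot be repaired from $f=o(1)$ alone. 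For fairness, the paper's own proof passes over exactly this point --- it asserts $\Ric_{\hat g}\geq -f(s)$ for every $s$ without tracking how the rescaling by $s^{-2}$ inflates the lower bound --- so your proposal reproduces, rather than fills, the missing step; but as written the argument does not go through.
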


\proof By \cref{lem:convergence} we can assume that $f$ is a function of $\rho$. Let $\omega_s$ be as in \eqref{eq:dilatation}, we denote by $g_{(s)}$ the metric $s^{-2}\omega_s^* g$ on $[1/s,+\infty) \times L$. It is easy to prove that $( [1/s,+\infty) \times L, \dist_{g_{(s)}}, x)$ converges in the pointed-Gromov-Hausdorff topology to $([0,+\infty) \times L, \dist_{\hat{g}},x)$ for some $x \in \set{\rho =2}$. Since 
\begin{equation}
    \lim_{s \to +\infty} \abs{ B(x,1)}_{g_{(s)}} = \abs{B(x,1)}_{\hat{g}}
\end{equation}
by \cite[Theorem 1.2]{DePhilippis2018}  $( [1/s,+\infty) \times L, \dist_{g_{(s)}}, \mu_{g_{(s)}}, x)$ converges to $( [0,+\infty)\times L, \dist_{\hat{g}},\mu_{\hat{g}},x)$ in the pointed-measured-Gromov-Hausdorff topology. By \cite[Theorem 7.2]{Gigli2015b} $\Ric_{\hat{g}}\geq -f(s)$ for every $s$, hence $\Ric_{\hat{g}}\geq 0$. In particular, $\abs{L_i}\leq \abs{ \S^{n-1}}$, hence $\AVR(g)\leq N$ by \cref{eq:avrsumtext,eq:avrendtext}.\endproof

In \cite[Theorem 1.7]{Mari2019} the authors guarantee the existence of the (weak) IMCF starting at $\Omega \subseteq M$ with smooth boundary whenever the Ricci curvature satisfies a  nondecreasing lower bound and a global $L^1$-Sobolev Inequality is in force, that is 
\begin{equation}\label{eq:L1Sobolev}
    \left(\;\int\limits_M \abs{\varphi}^{\frac{n}{n-1}} \dd \mu\right)^{\frac{n-1}{n}} \leq \Cn_{S}\int\limits_M \abs{\D \varphi} \dd \mu
\end{equation}
for every $\phi \in \Lip_c(M)$ where $\Cn_S$ is some finite constant depending only on the geometry of the manifold.

It is well known that the existence of a finite constant $\Cn_S$ for the $L^1$-Sobolev Inequality is equivalent to the existence of a positive Isoperimetric constant, which is a $\Cn_{\rm Iso}>0$ such that
\begin{equation}\label{eq:Isoperimetric}
    \Cn_{\rm Iso}  \leq \frac{\abs{\partial K}^n}{\abs{K}^{n-1}}
\end{equation}
for every compact domain $K$ (see \cite[Remark 6.6]{Federer1960} or \cite[pp. 89-90]{Schoen1994}).

Riemannian manifolds with nonnegative Ricci curvature and Euclidean Volume Growth satisfies both \eqref{eq:L1Sobolev} and \eqref{eq:Isoperimetric} as observed by \cite{Varopoulos1985} (see also \cite{Carron1994} and \cite[Theorem 8.4]{Hebey1999}). It is then plausible that these inequalities hold also when the manifold asymptotically behaves as a cone with nonnegative Ricci curvature.

\begin{proposition}\label{prop:Sobolev_in_AC}
Let $(M,g)$ be a $\CC^0$-Asymptotically Conical Riemannian manifold. Suppose that $\Ric_g\geq - f(\dist(x,o))$ for some nonnegative smooth function $f(t)=o(1)$ as $t \to +\infty$, for some $o \in M$. Then $(M,g)$ admits a global $L^1$-Sobolev Inequality \eqref{eq:L1Sobolev} for some finite constant $\Cn_S$ or equivalently it has a positive isoperimetric constant $\Cn_{\rm Iso}>0$.
\end{proposition}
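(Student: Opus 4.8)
The plan is to establish the equivalent statement that $(M,g)$ has a strictly positive isoperimetric constant $\Cn_{\rm Iso}$, arguing by contradiction through a blow-down analysis built on \cref{lem:convergence} and \cref{prop:Ricci-limit-cone}. Suppose $\Cn_{\rm Iso}=0$. Then there is a sequence of bounded sets of finite perimeter $E_j\subset M$ with $0<\abs{E_j}<+\infty$ and $\abs{\partial E_j}/\abs{E_j}^{(n-1)/n}\to 0$. Since the isoperimetric ratio $\abs{\partial E}^n/\abs{E}^{n-1}$ is scale invariant, the natural scale attached to $E_j$ is $s_j:=\abs{E_j}^{1/n}$, and I would discuss separately the possible behaviours of $s_j$ along subsequences.

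If $s_j\to+\infty$, I rescale the metric as in \cref{lem:convergence}, setting $g_{(s_j)}=s_j^{-2}\omega_{s_j}^*g$. By that lemma the rescaled manifolds converge, locally in the $\CC^0$-topology and in the pointed measured Gromov--Hausdorff sense, to the asymptotic cone $(C(L),\hat g)$, which by \cref{prop:Ricci-limit-cone} satisfies $\Ric_{\hat g}\geq 0$ and has $\AVR(\hat g)=\AVR(g)>0$. After this rescaling the $E_j$ have unit volume and perimeter tending to $0$ in $(M,g_{(s_j)})$. Passing to the limit via compactness of sets of finite perimeter and lower semicontinuity of the perimeter under this convergence would produce a set of unit volume and zero perimeter on the cone, contradicting the isoperimetric inequality valid there: the cone is a metric measure cone with nonnegative Ricci curvature and Euclidean volume growth, for which positivity of the isoperimetric constant is classical (see \cite{Varopoulos1985}, and, in the synthetic framework, \cite{DePhilippis2018,Gigli2015b}).

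If instead $s_j\to 0$, I would blow the metric up at scale $s_j$ around points $x_j\in E_j$. Since $M$ is smooth and, on the ends, asymptotically conical with curvature decaying to $0$, the rescaled pointed manifolds converge to flat $\R^n$ whether $x_j$ remains in a compact region or diverges; the rescaled sets again carry unit volume and vanishing perimeter, contradicting the Euclidean isoperimetric inequality. Finally, if $s_j$ stays bounded away from $0$ and $+\infty$, then $\abs{E_j}$ converges to a finite positive value while $\abs{\partial E_j}\to 0$; recentering to prevent loss of mass at infinity and using $BV$-compactness on the limiting (smooth, unit-scale) geometry yields a limit set of positive finite volume and zero perimeter, which is impossible on the connected noncompact manifold $M$, whose total volume is infinite. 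In every case we reach a contradiction, so $\Cn_{\rm Iso}>0$, and the claimed global $L^1$-Sobolev inequality follows from the equivalence recalled before the statement.

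The main obstacle is the rigorous passage to the limit in the blow-down regime $s_j\to+\infty$: one must justify compactness and lower semicontinuity of the perimeter functional along a sequence of manifolds whose metrics vary and converge only in $\CC^0$, while simultaneously controlling the possible escape of volume towards the vertex or towards infinity. This is exactly where the measured Gromov--Hausdorff convergence underlying \cref{prop:Ricci-limit-cone}, together with the stability of perimeters and volumes in that setting, carries the essential weight; the bounded-scale regime requires the analogous care in the recentering step to rule out mass escaping to infinity.
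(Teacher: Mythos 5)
Your route is genuinely different from the paper's: instead of transferring the $L^1$-Sobolev inequality from the asymptotic cone via the $\CC^0$-closeness of $g$ to $\hat g$ (which makes the two Sobolev quotients $(1\pm\varepsilon)$-comparable for functions supported far out) and then invoking \cite[Theorem 3.2]{Pigola2014} to pass from ``Sobolev off a compact set'' to a global inequality, you attack the isoperimetric constant directly by contradiction and a blow-up/blow-down compactness argument. As written, this has a genuine gap. A sequence $E_j$ with $\abs{\partial E_j}/\abs{E_j}^{(n-1)/n}\to 0$ is in no way localised at the single scale $s_j=\abs{E_j}^{1/n}$ around a single point: $E_j$ may split its volume among pieces sitting at wildly different locations and scales, partly in the compact core and partly far out on the end(s). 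Your trichotomy on the behaviour of $s_j$, combined with one rescaling or one recentering, then captures at most one such piece, and the rest of the volume is lost in the limit; you cannot conclude that the limit set has unit (or even positive) volume, so no contradiction is reached. Ruling this out is exactly the concentration--compactness difficulty that makes generalized existence results for isoperimetric problems on noncompact manifolds delicate, and it is not supplied by $BV$-compactness and lower semicontinuity alone. A secondary, more fixable, issue is that lower semicontinuity of the perimeter and convergence of volumes along pointed measured Gromov--Hausdorff limits with metrics converging only in $\CC^0$ need a precise justification or citation.

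The paper's proof is built to sidestep all of this: the $L^1$-Sobolev functional involves only $\abs{\D\varphi}$ and $\dd\mu$, hence is pointwise $(1\pm\varepsilon)$-comparable under a $\CC^0$-perturbation of the metric, so the inequality valid on the cone (which has $\Ric_{\hat g}\geq 0$ by \cref{prop:Ricci-limit-cone}) transfers verbatim to $E\smallsetminus K_\varepsilon$; the gluing across the compact core is then delegated entirely to \cite[Theorem 3.2]{Pigola2014}. If you wish to keep the blow-down picture you would still need an analogue of that gluing step plus a splitting analysis of the degenerating sequence, at which point the transfer argument is both shorter and more robust.
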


\proof In virtue of \cite[Theorem 3.2]{Pigola2014} it is enough to prove that a $L^1$-Sobolev Inequality is satisfied outside some compact set. By \cref{lem:finiteends} $(M,g)$ has only a finite number of ends each of them corresponding to one connected component of the link $L$. Thus, we can assume that $(M,g)$ has only one Asymptotically Conical end $E$. By \cref{prop:Ricci-limit-cone}, $E$ asymptotically behaves as a cone with nonnegative Ricci curvature, that satisfies the $L^1$-Sobolev Inequality for some constant $\hat{C}$. Suppose by contradiction that for every compact $K\subset E$, the $L^1$-Sobolev Inequality is not satisfied on $E \smallsetminus K$. Since the metric $g$ converges to the metric $\hat{g}= \dd \rho ^2 + \rho^2 g_L$, for every $\varepsilon>0$ there exists a compact $K_\varepsilon$ such that
\begin{align}
    \abs{\;\int\limits_M \abs{ \varphi}^{\frac{n}{n-1}}\dd { \mu_g}-\int\limits_M \abs{ \varphi}^{\frac{n}{n-1}}\dd { \mu_{\hat{g}}}}\leq \varepsilon \int\limits_M \abs{ \varphi}^{\frac{n}{n-1}}\dd { \mu_{\hat{g}}}
    \intertext{and}
        \abs{\; \int\limits_M \abs{ \D\varphi}_g\dd { \mu_g}-\int\limits_M \abs{\D \varphi}_{\hat{g}}\dd { \mu_{\hat{g}}}}\leq \varepsilon \int\limits_M \abs{\D \varphi}_{\hat{g}}\dd { \mu_{\hat{g}}}
\end{align}
for every $\varphi\in \Lip_c(E \smallsetminus K_\varepsilon)$. Moreover, for every $\Cn$ there exists a function $\varphi \in \Lip_c(E \smallsetminus K_\varepsilon)$ such that
\begin{equation}
        \left(\;\int\limits_M \abs{\varphi }_g^{\frac{n}{n-1}} \dd {\mu_g}\right)^{\frac{n-1}{n}} > \Cn \int\limits_M \abs{\D \varphi}_g \dd {\mu_g}
\end{equation}
Then for every $\varepsilon<1$ and $\Cn$ we have $\varphi\in \Lip_c(E \smallsetminus K_\varepsilon)$ that satisfies
\begin{align}
    \left(\; \int\limits_M \abs{ \varphi}_{\hat{g}}^{\frac{n}{n-1}} \dd { \mu_{\hat{g}}}\right)^{\frac{n-1}{n}}&\geq\frac{1}{ (1+ \varepsilon)^{ \frac{n-1}{n}}} \left(\; \int\limits_M \abs{ \varphi}_{g}^{\frac{n}{n-1}} \dd { \mu_{g}}\right)^{\frac{n-1}{n}} >\frac{\Cn}{ (1+ \varepsilon)^{ \frac{n-1}{n}}} \int\limits_M \abs{\D \varphi}_g \dd {\mu_g}\\
    & \geq \Cn\,\frac{ (1-\varepsilon)}{ (1+ \varepsilon)^{ \frac{n-1}{n}}}\int\limits_M \abs{\D \varphi}_{\hat{g}} \dd {\mu_{\hat{g}}}.
\end{align}
It is enough to choose $\varepsilon<1$ and $\Cn$ such that
\begin{equation}
   \Cn\, \frac{(1-\varepsilon)}{ (1+ \varepsilon)^{ \frac{n-1}{n}}}> \hat{\Cn}
\end{equation}
to obtain a contradiction to the $L^1$-Sobolev Inequality on the asymptotic cone.\endproof

\smallskip

With some analogy,  considering $\Omega\subseteq M$ some open subset with smooth boundary and $u_p:M\smallsetminus \Omega \to \R$ the $p$-capacitary potential associated to $\Omega$, $1 <p <n $, we recall from the Introduction that
\begin{equation}\label{eq:ends_capacity_decomposition}
    \Cn^{(i)}_p(\Omega) =\left(\frac{p-1}{n-p}\right)^{p-1} \frac{1}{\abs{\S^{n-1}}} \int\limits_{\set{u_p=1/t}\cap E_i} \abs{\D u_p}^{p-1} \dd\sigma = \frac{\Cn_p\left(\set{u\leq \frac{1}{t}} \cap E_i; E_i\right) }{t^{p-1}}.
\end{equation}
This is well defined since for every $T \in [1,+\infty)$ large enough, $\set{ u_p>1/T}$ contains the compact $K$ in \cref{def:C0ACintro} and the quantity in \cref{eq:ends_capacity_decomposition} does not depend on $t \geq T$. Moreover, it is readily checked that, by \eqref{p-cap-u}, the $p$-capacity of $\Omega$ splits as
\begin{equation}
    \Cn_p(\Omega)= \sum_{i=1}^{N} \Cn^{(i)}_p(\Omega).
\end{equation}
The quantity $\Cn^{(i)}_p(\Omega)$ represents the portion of $\Omega$ that contributes to its $p$-capacity under the influence of the end $E_i$. In the case $\Omega$ is already contains $K$ in \cref{def:AVRconico} it is exactly the $p$-capacity of $\Omega \cap E_i$.

On cones, the $p$-capacity of the cross section $\set{\rho=r}$ can be easily computed since the function $u=(\rho/r)^{-(n-p)/(p-1)}$ is the $p$-capacitary potential associated to these sets. In Asymptotically Conical Riemannian manifold one might expect that the $p$-capacity of $\set{\rho =r}$ approaches the model one for large $r$. Despite the definition of the $p$-capacity involves the first order derivatives of the $p$-capacitary potential, the convergence is also true even if the metric converges only in the $\CC^0$-topology.

\begin{lemma}\label{lem:limit_p_cap_cross_sections}
Let $(M,g)$ be a $\CC^0$-Asymptotically Conical Riemannian manifold. Let $\rho$ be the radial coordinate. Then
\begin{equation}\label{eq:limit_p_cap_cross_sections}
    \lim_{r \to +\infty}\frac{ \capa_p\left(\set{\rho \leq r}\right)}{r^{n-p}\abs{\S^{n-1}}}= \left( \frac{n-p}{p-1}\right)^{p-1} \AVR(g).
\end{equation}
\end{lemma}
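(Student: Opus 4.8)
\emph{The plan is} to compare the $p$-capacity of $\set{\rho\le r}$ with the one computed on the exact asymptotic cone, exploiting the fact that, for $r$ large, the metric $g$ is uniformly close to $\hat{g}$ on the whole region $\set{\rho\ge r}$ that is \emph{seen} by the capacity. Indeed, by the $\CC^0$-Asymptotically Conical assumption in \cref{def:C0ACintro} we have $\abs{g-\hat{g}}_{\hat{g}}=o(1)$ as $\rho\to+\infty$, so that for every $\e>0$ there exists $R_\e\geq 1$ with
\begin{equation*}
(1-\e)\,\hat{g}\leq g\leq (1+\e)\,\hat{g}\qquad\text{as quadratic forms on }\set{\rho\geq R_\e}.
\end{equation*}
The first observation I would make is that $\capa_p(\set{\rho\le r})$ depends only on the geometry of the exterior region $\set{\rho\ge r}$: replacing an admissible $v$ by its truncation $\min(\max(v,0),1)$ shows that the infimum defining the $p$-capacity is unchanged if one restricts to test functions with $0\le v\le 1$ and $v\equiv 1$ on $\set{\rho\le r}$, for which $\D v$ is supported in $\set{\rho\ge r}$ and $v=1$ on $\set{\rho=r}$ in the sense of traces.

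\emph{Next I would compute the model quantity.} As recalled just before the statement, on the cone $([r,+\infty)\times L,\hat{g})$ the radial function $u=(\rho/r)^{-(n-p)/(p-1)}$ is $p$-harmonic, equals $1$ on $\set{\rho=r}$ and vanishes at infinity, hence it realises the exterior capacity. Using $\abs{\D\rho}_{\hat g}=1$ and $\dd\mu_{\hat g}=\rho^{n-1}\dd\rho\,\dd\mu_{g_L}$, a direct integration (convergent since $p<n$) gives
\begin{equation*}
\int\limits_{\set{\rho\geq r}}\abs{\D u}^p_{\hat g}\,\dd\mu_{\hat g}=\left(\frac{n-p}{p-1}\right)^{p-1}\abs{L}\,r^{n-p},
\end{equation*}
so that, recalling $\AVR(g)=\abs{L}/\abs{\S^{n-1}}$ from \cref{def:AVRconico}, the model exterior capacity of $\set{\rho\le r}$ equals $(\tfrac{n-p}{p-1})^{p-1}\AVR(g)\abs{\S^{n-1}}\,r^{n-p}$.

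\emph{Finally I would transfer this to $g$ by a monotonicity argument.} For a fixed admissible $w$ (supported in $\set{\rho\ge r}$, with $r\ge R_\e$), the sandwiching of the metrics makes $\abs{\D w}_{\hat g}$ and $\dd\mu_{\hat g}$ comparable to $\abs{\D w}_{g}$ and $\dd\mu_{g}$ up to factors $(1\pm\e)^{\pm p/2}$ and $(1\pm\e)^{n/2}$; integrating and passing to the infimum over the common class of competitors yields
\begin{equation*}
\Lambda_-(\e)\leq \frac{\capa_p(\set{\rho\le r})}{(\tfrac{n-p}{p-1})^{p-1}\AVR(g)\abs{\S^{n-1}}\,r^{n-p}}\leq \Lambda_+(\e)\qquad\text{for all }r\geq R_\e,
\end{equation*}
with $\Lambda_\pm(\e)\to 1$ as $\e\to0^+$. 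Letting first $r\to+\infty$ and then $\e\to 0^+$ gives \eqref{eq:limit_p_cap_cross_sections}.

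\emph{The main point} to get right is the combination of the reduction showing that the capacity is blind to everything inside $\set{\rho\le r}$ with the fact that the $\CC^0$ convergence is \emph{uniform on the whole tail} $\set{\rho\ge r}$ (equivalently, that $g_{(r)}\to\hat{g}$ uniformly on $\set{\rho\ge 1}$, as in \cref{lem:convergence}). This is exactly what lets the metric comparison be carried out globally on the supports of the competitors, so that no control on the derivatives of $g$, and no separate argument ruling out loss of capacity at infinity, is needed.
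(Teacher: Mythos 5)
Your proposal is correct and follows essentially the same route as the paper: both reduce to comparing the Dirichlet $p$-energies of a common class of exterior competitors under the uniform $\CC^0$-closeness of $g$ to $\hat g$ on $\set{\rho\geq R_\e}$, identify the resulting infimum with the exterior capacity on the exact cone, and use the explicit radial potential $(\rho/r)^{-\frac{n-p}{p-1}}$ to evaluate the model quantity as $(\tfrac{n-p}{p-1})^{p-1}\abs{L}\,r^{n-p}$. The only cosmetic difference is that you spell out the truncation of competitors and the integral computation, which the paper leaves implicit.
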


\proof Since the metric $g$ converges to the metric $\hat{g}$, for every $\varepsilon>0$ there exists a $R_\varepsilon>0$ such that for every $r \geq R_\varepsilon$ 
\begin{equation}
    \abs{\; \int\limits_{M} \abs{\D \varphi}_g^p \dd{\mu_g} - \int\limits_{M} \abs{\D \varphi}_{\hat{g}}^p \dd{\mu_{\hat{g}}}} \leq \varepsilon   \int\limits_{M} \abs{\D \varphi}_{\hat{g}}^p \dd{\mu_{\hat{g}}}
\end{equation}
holds for every function $\varphi \in \CC^\infty_c(\set{\rho\geq r})$ such that $\varphi=1$ on $\set{\rho =r}$. In particular, we have that 
\begin{equation}
    (1-\varepsilon) \int\limits_{M} \abs{\D \varphi}_{\hat{g}}^p \dd{\mu_{\hat{g}}} \leq  \int\limits_{M} \abs{\D \varphi}_g^p \dd{\mu_g}  \leq (1+\varepsilon)\int\limits_{M} \abs{\D \varphi}_{\hat{g}}^p \dd{\mu_{\hat{g}}}.
\end{equation}
The set $\set{\rho \geq r}$ is diffeomorphic to $[r, +\infty)\times L$ where $L$ is the cross section of the cone $(M,g)$ is asymptotic to. Hence, the family of $\varphi$ considered above are in one-to-one correspondence with the competitors for the $p$-capacity of $\set{\rho \leq r}$ in the Riemannian cone $[r, +\infty)\times L$. Dividing each side by $\abs{\S^{n-1}}$, recalling the characterisation of $\AVR(g)$ in \cref{def:AVRconico} and taking the infimum on each side of the previous chain of inequalities we are left with
\begin{equation}
    (1- \varepsilon) r^{n-p} \AVR(g)\left( \frac{n-p}{p-1} \right)^{p-1} \leq \frac{\capa_p(\set{\rho \leq r})}{\abs{\S^{n-1}}} \leq (1+\varepsilon) r^{n-p} \AVR(g)\left( \frac{n-p}{p-1} \right)^{p-1}.
\end{equation}
dividing each term by $r^{n-p}$ and sending to the limit as $r \to +\infty$ we have that
\begin{equation}
    (1- \varepsilon) \AVR(g)\left( \frac{n-p}{p-1} \right)^{p-1} \leq\lim_{r\to +\infty} \frac{ \capa_p(\set{\rho \leq r})}{r^{n-p} \abs{\S^{n-1}}} \leq (1+\varepsilon)  \AVR(g)\left( \frac{n-p}{p-1} \right)^{p-1}
\end{equation}
which in turns gives \eqref{eq:limit_p_cap_cross_sections} by arbitrariness of $\varepsilon$. \endproof

\subsection{Li-Yau-type estimates and gradient bound}
We will provide Li-Yau-type estimates for the $p$-Green function $G_p$ with a controlled constant as $p\to 1^+$. These estimates will be the starting point for the proof of both \cref{thm:asymptoticbehaviourofppotentialC0,thm:asymptotic_behaviour_of_IMCF}. We highlight that in \cite{Mari2019} the authors provided Li-Yau-type estimates for $p$-harmonic functions. The upper bound in \cite[Theorem 3.6]{Mari2019} carried out in a broader setting is actually in term of the distance. Conversely, the lower bound in \cite[Corollary 2.8]{Mari2019} is in terms of the distance in a model which has the same radial sectional curvature of the lower bound in \cref{eq:Ricci-intro}. 
This estimate does not seem sufficient for our aims. 

However, since our setting disposes of a precise asymptotic structure we can improve such lower bound by inheriting some techniques coming from \cite{Holopainen1999}, and using the natural foliaton of ends induced by the cross-sections of the asymptotic cone. To accomplish this program we first need a global Harnack Inequality holding on each end of $(M,g)$.

\begin{proposition}[Harnack's Inequality]\label{prop:harnack}
Let $(M,g)$ be a $\CC^0$-Asymptotically Conical Riemannian manifold with Ricci curvature satisfying \eqref{eq:Ricci-intro} and let $p>1$. A uniform Harnack Inequality holds on every $\set{\rho =R}\cap E$, that is there exists a constant $\Cn_H>0$ that depends only on the dimension and $p$ but not on $R$ such that
\begin{equation}\label{eq:harnackend}
    \inf_{\set{\rho =R}\cap E} u \geq \Cn_H^{\frac{1}{p-1}} \sup_{\set{\rho =R}\cap E} u
\end{equation}
for every positive $p$-harmonic function $u$ on $E$. Moreover, $\Cn_H$ is bounded as $p\to 1^+$
\end{proposition}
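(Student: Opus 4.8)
The plan is to reduce the estimate on the far-away cross-section $\set{\rho = R}\cap E$ to a Harnack inequality on the \emph{fixed} slice $\set{\rho = 1}\cap E$ for a family of metrics that $\CC^0$-converge to the cone metric $\hat{g}$, and then to run a standard local-Harnack-plus-chaining argument whose constant is frozen by the uniform ellipticity coming from this convergence. First I would exploit the scaling structure recorded in \cref{lem:convergence}. Since the $p$-Laplace equation is invariant both under diffeomorphisms and under multiplication of the metric by a positive constant, if $u$ is a positive $p$-harmonic function on $E$ with respect to $g$, then $v := u \circ \omega_R$ is positive and $p$-harmonic with respect to $g_{(R)} = R^{-2}\omega_R^* g$ on the region corresponding to $\set{\rho \geq 1/R}$. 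Because $\omega_R$ maps $\set{\rho = 1}$ onto $\set{\rho = R}$, one has $\inf_{\set{\rho=R}\cap E} u = \inf_{\set{\rho=1}\cap E} v$ and the analogous identity for the supremum, so it suffices to produce a Harnack inequality for $v$ across $\set{\rho = 1}\cap E$ with a constant independent of $R$.

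By \cref{lem:convergence} the metrics $g_{(R)}$ converge to $\hat{g}$ in $\CC^0$ on the fixed annulus $\set{1/2 \leq \rho \leq 2}$ as $R \to +\infty$. In particular, for all $R$ large their components satisfy the uniform ellipticity bounds \eqref{eq:uniformellipticity} on this annulus with constants $\lambda,\Lambda$ independent of $R$ (shrinking $\lambda$ and enlarging $\Lambda$ to also absorb the finitely many bounded values of $R$ by compactness). Written in the coordinates $(\rho,\vartheta^1,\ldots,\vartheta^{n-1})$, the $p$-Laplace equation for $v$ is a divergence-form quasilinear equation whose structure constants are controlled solely by $n$, $p$, $\lambda$ and $\Lambda$. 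Hence the classical local Harnack inequality for positive $p$-harmonic functions (Serrin--Trudinger type) applies on each metric ball $B \subset \set{1/2 \leq \rho \leq 2}$ of a fixed radius, yielding $\sup_B v \leq A \inf_B v$ with $A = A(n,p,\lambda,\Lambda)$ independent of $R$.

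Since $E$ corresponds to a single connected component $L_i$ of the link (\cref{lem:finiteends}), the slice $\set{\rho=1}\cap E = \set{1}\times L_i$ is compact and connected, and the $\CC^0$-closeness of $g_{(R)}$ to $\hat{g}$ makes its intrinsic geometry stabilise. I would therefore cover it by a \emph{fixed} finite number $m$ of the balls above, with consecutive balls overlapping and $m$ depending only on $L_i$, not on $R$. Chaining the local Harnack inequality along these balls gives $\sup_{\set{\rho=1}\cap E} v \leq A^{m} \inf_{\set{\rho=1}\cap E} v$; undoing the scaling and setting $\Cn_H := (A^{-m})^{p-1}$ turns this into \eqref{eq:harnackend}. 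The exponent $1/(p-1)$ is exactly what renders $\Cn_H = (A^{-m})^{p-1}$ bounded (indeed tending to $1$) as $p \to 1^+$, as long as $A(n,p,\lambda,\Lambda)$ does not grow faster than exponentially in $1/(p-1)$, which is ensured by the explicit $p$-dependence of the $p$-Laplacian Harnack estimate together with the fact that $m$ is independent of $p$. The main obstacle is precisely this uniformity in $R$: a naive Harnack inequality on $\set{\rho=R}$ carries a constant that a priori degenerates as $R\to+\infty$, and the crux of the argument is that the scaling reduction freezes both the ellipticity constants $\lambda,\Lambda$ and the chaining number $m$; the only genuinely delicate bookkeeping is tracking the dependence of $A$ on $p$ to certify that $\Cn_H$ stays bounded as $p\to 1^+$.
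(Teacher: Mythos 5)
Your argument is correct in its overall architecture but follows a genuinely different route from the paper. You blow the cross-section down to the fixed slice $\set{\rho=1}\cap E$ via $\omega_R$, observe that $v=u\circ\omega_R$ is $p$-harmonic for $g_{(R)}=R^{-2}\omega_R^*g$, and then extract a uniform-in-$R$ constant from the uniform ellipticity of $g_{(R)}$ on a fixed annulus (via \cref{lem:convergence}) together with the classical local Harnack inequality and a chaining over a fixed finite cover of the compact connected link. The paper instead stays at scale $R$: it invokes the intrinsic, scale-invariant Harnack inequality of \cite[Theorem 3.4]{Mari2019} on balls of radius comparable to $R$, whose hypotheses (volume doubling and a weak $(1,p)$-Poincar\'e inequality) are supplied by \cref{prop:Sobolev_in_AC} and \cite[Example 2.20]{Holopainen1999}, and then chains finitely many such large balls along $\set{\rho=R}\cap E$, using that the diameter of the slice grows linearly in $R$. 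Both mechanisms legitimately freeze the $R$-dependence; yours trades the doubling/Poincar\'e input for uniform ellipticity in fixed coordinates, which is arguably more elementary and self-contained on the geometric side.

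The one step you have not actually justified is the final claim about the $p\to1^+$ behaviour, and it is precisely the part of the statement that asserts $\Cn_H$ is bounded as $p\to 1^+$. You write that the local Harnack constant $A(n,p,\lambda,\Lambda)$ grows at most exponentially in $1/(p-1)$ and that this ``is ensured by the explicit $p$-dependence of the $p$-Laplacian Harnack estimate''; but the classical Serrin--Trudinger statements do not record this dependence, and a naive run of the Moser iteration for the $p$-Laplacian does not obviously produce a constant of the form $A=\Cn^{1/(p-1)}$ with $\Cn$ bounded as $p\to1^+$. Establishing exactly this form of the constant is the nontrivial content of \cite{Moser2007} and of \cite[Theorem 3.4]{Mari2019}, which is why the paper routes the whole proof through the latter. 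To close your argument you should either cite one of these results --- applied on your fixed annulus, where the structure constants are uniform in $R$ --- in place of the generic local Harnack inequality, or redo the iteration while tracking the $p$-dependence; as written, the uniformity in $p$ is asserted rather than proved.
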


\proof \cref{prop:Sobolev_in_AC} and \cite[Example 2.20]{Holopainen1999} guarantees that the hypotheses of \cite[Theorem 3.4]{Mari2019} are satisfied on $B(x,6r)$ for every $x\in M\smallsetminus B(o,2R)$. Hence we have that
\begin{equation}\label{eq:Harnack-balls}
    \sup_{B(x,R)} u \leq \Cn^{\frac{1}{p-1}} \inf_{B(x,R)} u
\end{equation}
for every positive $p$-harmonic function $u$ on $E$, for some constant which is bounded for large $R$ and in $p$ as $p\to 1^+$. Since the $\set{\rho =R}\cap E$ is connected and its diameter increases linearly in $R$ by the asymptotic assumption, it can be covered with $N$ balls of radius $R$, for some $N$ not depending on $R$. By chaining \cref{eq:Harnack-balls} $N$ times we obtain \cref{eq:harnackend}. \endproof

As a consequence we can overcome the main issue in \cite[Proposition 5.9]{Holopainen1999}, that is a control on the bounded components of $E \smallsetminus B(o,r)$. The proof of \cite[Proposition 5.9]{Holopainen1999} suggest also an explicit value for the constant.

\begin{proposition}\label{prop:Holopainen}
Let $(M,g)$ be a $\CC^0$-Asymptotically Conical Riemannian manifold with Ricci curvature satisfying \eqref{eq:Ricci-intro} and $o \in M$. For every end $E$ of $M$ there exists a constant $\Cn>0$ such that
\begin{align}%\label{eq:upper_bound_Green} \sup_{E(r)} G(o,x) &\leq \Cn_U^{\frac{1}{p-1}}   \int\limits_{2r}^{+\infty} \left( \frac{t}{\abs{B(o,t) \cap E}} \right)^{\frac{1}{p-1}} \dd t\\\intertext{and}
    \sup_{\partial E(r)} G(o,x)& \geq \Cn^{\frac{1}{p-1}} \int\limits_{2r}^{+\infty} \left( \frac{t}{\abs{B(o,t) \cap E}} \right)^{\frac{1}{p-1}} \dd t \label{eq:lower_bound_Green}
\end{align}
holds for every $r>0$ where $E(r)$ is the unbounded component of $E\smallsetminus \overline{B(o,r)}$. Moreover, the constant $\Cn= a\, \capa_p(\overline{K};E)$ where $a>0$ is such that $G_p(x,o)\geq a^{1/(p-1)}$ on $M\smallsetminus K$ and $K$ is the compact set in \cref{def:C0ACintro}.
\end{proposition}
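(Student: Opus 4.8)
The plan is to transfer the estimate from the $p$-Green function $G=G_p(o,\cdot)$ to the $p$-capacitary potential $u_K$ of $\overline K$ relative to the end $E$, and then to establish it for $u_K$ by comparing its super-level sets with metric balls. Since $\partial K$ is compact and $G$ is positive and continuous there, I set $a^{1/(p-1)}=\min_{\partial K}G>0$. As $a^{1/(p-1)}u_K$ and $G$ are both $p$-harmonic on $E\smallsetminus K$, both vanish at infinity by $p$-nonparabolicity, and satisfy $a^{1/(p-1)}u_K=a^{1/(p-1)}\leq G$ on $\partial K$, the Comparison Principle (\cref{thm:comparison_principle}) gives $G\geq a^{1/(p-1)}u_K$ on $E\smallsetminus K$, whence $\sup_{\partial E(r)}G\geq a^{1/(p-1)}\sup_{\partial E(r)}u_K$. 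It therefore suffices to prove $\sup_{\partial E(r)}u_K\geq \capa_p(\overline K;E)^{1/(p-1)}\int_{2r}^{+\infty}(t/\abs{B(o,t)\cap E})^{1/(p-1)}\dd t$.

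Next I would compute the $p$-capacity of the super-level sets of $u_K$. For a regular value $s\in(0,1)$ the truncation $\min(u_K/s,1)$ is admissible for $\capa_p(\set{u_K\geq s}\cap E;E)$ and is $p$-harmonic where $u_K<s$; combining the coarea formula with the constancy (by the divergence theorem and $p$-harmonicity) of the flux $\int_{\set{u_K=\tau}\cap E}\abs{\D u_K}^{p-1}\dd\sigma=\capa_p(\overline K;E)$ yields $\capa_p(\set{u_K\geq s}\cap E;E)=\capa_p(\overline K;E)\,s^{1-p}$.

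The crucial step — and the point where the uniform Harnack inequality of \cref{prop:harnack} is indispensable — is to show that the super-level sets of $u_K$ are essentially contained in metric balls. Writing $n(r)=\sup_{\partial E(r)}u_K$, the maximum principle bounds $u_K\leq n(r)$ on the unbounded component $E(r)$. The difficulty, which is exactly the gap in \cite[Proposition 5.9]{Holopainen1999}, is the possible presence of bounded components of $E\smallsetminus\overline{B(o,r)}$: each such component is capped by a portion of $\partial B(o,r)$, and applying \cref{prop:harnack} on the cross-sections $\set{\rho=R}\cap E$ with $R$ comparable to $r$ controls $\sup u_K$ over all of $\partial B(o,r)\cap E$, and hence over the bounded components by the maximum principle, by a fixed multiple $\Cn_H^{-1/(p-1)}n(r)$. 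Thus $\set{u_K>\Cn_H^{-1/(p-1)}n(r)}\cap E\subseteq\overline{B(o,r)}\cap E$, and the monotonicity of $\capa_p(\,\cdot\,;E)$ combined with the previous step gives $\capa_p(\overline K;E)\,(\Cn_H^{-1/(p-1)}n(r))^{1-p}\leq\capa_p(\overline{B(o,r)}\cap E;E)$.

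Finally I would bound $\capa_p(\overline{B(o,r)}\cap E;E)$ from above by inserting a radial test function $h(\rho)$ with $h(r)=1$ and $h(+\infty)=0$; since $\abs{\D\rho}_g=1+o(1)$, the coarea formula and optimisation in $h$ give $\capa_p(\overline{B(o,r)}\cap E;E)\leq \Cn(\int_r^{+\infty}\mathcal A(s)^{-1/(p-1)}\dd s)^{-(p-1)}$ with $\mathcal A(s)=\abs{\set{\rho=s}\cap E}$. A standard volume-doubling computation — doubling being granted by the conical structure through \cref{lem:AVRconico} — converts $\int_r^{+\infty}\mathcal A^{-1/(p-1)}$ into a quantity bounded below by $\int_{2r}^{+\infty}(t/\abs{B(o,t)\cap E})^{1/(p-1)}\dd t$, the drop from $r$ to $2r$ absorbing the doubling constant. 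Substituting this into the previous inequality, rearranging (recalling $1-p<0$), and multiplying through by $a^{1/(p-1)}$ produces \eqref{eq:lower_bound_Green} with the stated constant $\Cn=a\,\capa_p(\overline K;E)$, the Harnack factor $\Cn_H$ being absorbed. I expect the third step to be the main obstacle: without the global, scale-invariant Harnack inequality on the cross-sections one cannot rule out that $u_K$ (equivalently $G$) is large on bounded components far out in the end, which would destroy the containment of super-level sets in balls on which the whole comparison rests.
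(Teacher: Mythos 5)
Your proof is correct in substance, but it follows a genuinely different route from the paper's. The paper's proof is essentially a verification-and-citation: it checks that the end is $p$-large, satisfies volume doubling and a weak $(1,p)$-Poincar\'e inequality, and obeys the volume comparison condition \eqref{eq:PVC}, and then invokes \cite[Proposition 5.9]{Holopainen1999} ``restricted to the given end'', tracking the constant through Holopainen's computation. In particular, the paper does \emph{not} use the Harnack inequality of \cref{prop:harnack} inside this proof at all: since the conclusion only concerns $\sup_{\partial E(r)}G$ over the boundary of the \emph{unbounded} component, Holopainen's argument goes through without ever touching the bounded components of $E\smallsetminus\overline{B(o,r)}$; the Harnack inequality is deployed only later, in the proof of \cref{thm:li-yau}, precisely to pass from $\sup_{\partial E(2R(t))}G$ to $\inf_{\set{\rho=t}}G$. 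Your argument instead reconstructs the estimate from scratch — reduction to the capacitary potential $u_K$ of $\overline K$ relative to $E$, the exact identity $\capa_p(\set{u_K\geq s}\cap E;E)=s^{1-p}\capa_p(\overline K;E)$, containment of superlevel sets in balls, and the upper bound on $\capa_p(\overline{B(o,r)}\cap E;E)$ via a radial test function and doubling — and in doing so it effectively merges \cref{prop:Holopainen} with the Harnack step of \cref{thm:li-yau}, proving the slightly stronger containment statement on whole cross-sections. What your approach buys is self-containedness and an estimate that already controls the bounded components; what the paper's buys is brevity and a cleaner separation of the two difficulties.

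Two minor repairs are needed in your third step. First, the Harnack comparison should be run on cross-sections $\set{\rho=R}$ with $R\geq(1+\varepsilon)r$: for such $R$ the cross-section necessarily meets $\overline{E(r)}$ (it is connected, contains $\partial E(r)\subset\set{\rho\leq(1+\varepsilon)r}$ and points with $\rho\to+\infty$), whence $\inf_{\set{\rho=R}}u_K\leq n(r)$ and \cref{prop:harnack} gives $\sup_{\set{\rho=R}}u_K\leq \Cn_H^{-1/(p-1)}n(r)$; for a point of $\partial B(o,r)$ with $\rho$ close to $(1-\varepsilon)r$ the corresponding cross-section need not meet $\overline{E(r)}$, so controlling $\sup_{\partial B(o,r)\cap E}u_K$ directly, as you state it, is not quite justified. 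This yields $\set{u_K>\Cn_H^{-1/(p-1)}n(r)}\cap E\subseteq\set{\rho<(1+\varepsilon)r}\subseteq B(o,Cr)$ rather than $\overline{B(o,r)}$; by doubling the dilated radius only costs a multiplicative constant, which is harmless. Second, your final constant is $a\,\capa_p(\overline K;E)$ only up to a factor involving $\Cn_H$ and geometric constants independent of $p$; this is consistent with how the paper actually uses the proposition (in the proof of \cref{thm:li-yau} the constant appears as $\Cn_5=\Cn\, a\,\capa_p(\partial K;E)$ with an unspecified $\Cn$), but it is worth being explicit that the literal equality $\Cn=a\,\capa_p(\overline K;E)$ is not what either argument produces.
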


\proof  We already observed that $(M,g)$ as a finite number of ends with connected boundary in \cref{lem:finiteends}. Denote by $E$ one of them. Since $E$ is $\CC^0$-Asymptotically Conical, the volume of $B(o,r) \cap E$ grows like $r^n$, then for some large $R>0$
\begin{equation}
    \int\limits_{R}^{+\infty} \left(\frac{r}{\abs{B(o,r) \cap E}} \right)^{\frac{1}{p-1}} \dd r \leq \Cn R^{-\frac{n-p}{p-1}} < +\infty
\end{equation}
that is $E$ is $p$-large. $E$ satisfies a weak $(1,p)$-Poincaré Inequality and a volume-doubling property (see \cite[Example 2.20]{Holopainen1999}. Moreover, $E$ satisfies the volume comparison condition which means that there exists a constant $\Cn_v$ such that
\begin{equation}\label{eq:PVC}
    \abs{E\cap B(o,r)}\leq \Cn_v \abs{B(x,r/8)}
\end{equation}
holds for any $r \geq R$ and $x \in \partial B(o,r) \cap E$. Observe that \eqref{eq:PVC} holds on the cones. Indeed, since $L$ is compact $\abs{B(x,r/8)}\geq \mu r^n/8^n$ for some positive $\mu>0$. If $E$ is merely $\CC^0$-Asymptotically Conical, both the volume of $E \cap B(o,r)$ and $B(x,r/8)$ are approaching the corresponding ones on the cone, which proves \eqref{eq:PVC}. 

With the very same arguments as in \cite[Proposition 5.9]{Holopainen1999} one can prove \eqref{eq:lower_bound_Green}, by restricting all quantities to the given end. A close look to \cite[Proposition 5.9]{Holopainen1999} gives also that the constant can be chosen as above, firstly choosing $a>0$ and following the computations accordingly. \endproof

% It remains to prove the uniform Harnack's Inequality. The main tool we are employing is the Yau-type estimate 
% \begin{equation}
% \label{yau-cinesi}    
% \sup_{B(x, R)}\frac{\abs{\D u}}{u} \leq \Cn\frac{1 + \sqrt{\tilde{K}}R}{R}
% \end{equation}
% in force for any $p$-harmonic function defined in a ball $B(x, 2R)$ where $\Ric \geq - \tilde{K}$, with a constant $C$ depending only on $n$ and $p$. It has been provided in \cite{Wang2010}. Fixing $o \in M$, and applying \eqref{yau-cinesi} in $B(y, R/4)$ for some $y\in \partial B(o,R)$, we get
% \begin{equation}
% \label{gradient-decay}
% \sup_{B(y, R/4)} \frac{\abs{\D u}}{u } \leq \Cn_1 \frac{\sqrt{1 +R^2} + R}{R\sqrt{1 +R^2}} \leq\frac{ \Cn_2}{R}
% \end{equation}
% for other constants $\Cn_1$ and $\Cn_2$ again depending only on $n$ and $p$. Here, we employed $B(y, R/4) \subset M \setminus B(o, R/4)$ and the lower bound on the Ricci curvature given by \eqref{eq:Ricci-intro}. Because of \eqref{eq:asymptoticaldistance}, we can safely replace $R$ with $\rho (y)$ in \eqref{gradient-decay}, possibly changing the constants. As the diameter of $\{\rho = R\}$ grows with $R$ by the assumed asymptotic conicality, we deduce from \eqref{gradient-decay} that
% \[
% \log (u(x)) -\log(u(y)) \leq \sup_{\set{ \rho =R } \cap E} \abs{\D \log u } \diam \set{\rho=R} \leq \Cn
% \]
% for some new constant $\Cn$ not depending on $R$.
\endproof

% \begin{remark}
% We want to underline that the above proposition can be improved. 
% \begin{enumerate}
%     \item By \cite[Theorem 3.6]{Mari2019} the $\Cn_U$ in \cref{eq:upper_bound_Green} can be chosen to be bounded as $p\to 1^+$. Indeed, the assumption are satisfies since a weighted $p$-Sobolev Inequality by \cref{prop:Sobolev_in_AC} and standard arguments.
%     \item A close look to \cite[Proposition 5.9]{Holopainen1999} shows that the constant in \cref{eq:lower_bound_Green} can be chosen as 
%     \begin{equation}
%         \Cn_L=a \; \capa_p(\partial K; E)
%     \end{equation}
%     where $a>0$ is such that $\set{G(o,x)\geq a^{1/(p-1)}}\supset \partial K$.
%     \item The constant $\Cn_H$ in \cref{eq:harnackend} can be chosen to be bounded as $p\to 1^+$. Indeed, \cref{prop:Sobolev_in_AC} and \cite[Example 2.20]{Holopainen1999} grants that hypothesis of \cite[Theorem 3.4]{Mari2019} are satisfied.
% \end{enumerate}
% \end{remark}

The above Proposition implies the following double-sided bound on $G_p$, that will in turn imply an analogous one for $u_p$. Such double bound will be key for working out the asymptotics.
\begin{theorem}[Li-Yau-type estimates]\label{thm:li-yau}
    Let $(M,g)$ be a $\CC^0$-Asymptotically Conical Riemannian manifold with $\Ric$ satisfying \eqref{eq:Ricci-intro}. Let $p>1$, each end of $M$ is $p$-nonparabolic and, given $G_p$ the $p$-Green function, then there exist two constants $\Cn_L,\Cn_U>0$
    \begin{equation}\label{eq:bound_pgreen}
        \Cn_L^{\frac{1}{p-1}} \d(o,x)^{-\frac{n-p}{p-1}} \leq G_p(o,x) \leq \Cn_U^{\frac{1}{p-1}} \d(o,x)^{-\frac{n-p}{p-1}}.
    \end{equation}
    for every $x \in M\smallsetminus\{o\}$. Moreover, the constant $\Cn_U$ is bounded as $p \to 1^+$ and there exists $R>0$ not depending on $p$ such that the lower bound in \cref{eq:bound_pgreen} holds with constant $\Cn_L= \Cn\, a \,\capa_p( \overline{K};E)$ on $M\smallsetminus B(o,R)$, where $a>0$ is such that $G_p(x,o)\geq a^{1/(p-1)}$ on $M\smallsetminus K$, $\Cn$ does not depend on $p$ and $K$ is the bounded set in \cref{def:C0ACintro}.
\end{theorem}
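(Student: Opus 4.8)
The plan is to establish the three assertions of the statement separately — that each end is $p$-nonparabolic, the upper bound, and the lower bound in \eqref{eq:bound_pgreen} — devoting most of the effort to the lower bound, where both the explicit form of $\Cn_L$ and the behaviour of the constants as $p\to 1^+$ must be monitored. Throughout I use that $1<p<n$, so that the exponent $(n-p)/(p-1)$ is positive and the relevant integrals converge.

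\emph{$p$-nonparabolicity and the upper bound.} Each end $E$ has Euclidean volume growth $\abs{B(o,t)\cap E}\asymp t^n$ by \cref{lem:AVRconico,lem:asymptoticaldistance}, so the integral $\int^{+\infty}(t/\abs{B(o,t)\cap E})^{1/(p-1)}\dd t$ converges; this is exactly the $p$-largeness already recorded in the proof of \cref{prop:Holopainen}. Since $E$ additionally enjoys volume doubling and a weak $(1,p)$-Poincaré inequality (through \cref{prop:Sobolev_in_AC}), the standard nonlinear potential theory produces a positive minimal $p$-Green function on $E$, i.e. $E$ is $p$-nonparabolic. For the upper bound I would invoke the Li--Yau-type estimate of \cite[Theorem 3.6]{Mari2019}, applicable under \eqref{eq:Ricci-intro} together with the Sobolev inequality of \cref{prop:Sobolev_in_AC}, which bounds $G_p(o,x)$ by a multiple of $\int_{\d(o,x)}^{+\infty}(t/\abs{B(o,t)})^{1/(p-1)}\dd t$. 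Inserting $\abs{B(o,t)}\geq(1-\e)\AVR(g)\abs{\B^n}t^n$ for large $t$ and integrating yields the decay $\d(o,x)^{-(n-p)/(p-1)}$, the coefficient behaving polynomially in the data so that its $(p-1)$-th power $\Cn_U$ stays bounded as $p\to 1^+$.

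\emph{The lower bound.} I would start from \cref{prop:Holopainen}, which for each end gives $\sup_{\partial E(r)}G(o,\cdot)\geq \Cn^{1/(p-1)}\int_{2r}^{+\infty}(t/\abs{B(o,t)\cap E})^{1/(p-1)}\dd t$ with $\Cn=a\,\capa_p(\overline{K};E)$. Evaluating the integral with the Euclidean volume growth produces $\sup_{\partial E(r)}G(o,\cdot)\geq \Cn_L^{1/(p-1)}r^{-(n-p)/(p-1)}$ for $r$ large, with $\Cn_L$ of the announced structure. To pass from this supremum over a sphere to the pointwise bound claimed at \emph{every} point, I would invoke the uniform Harnack inequality of \cref{prop:harnack} on the cross-section $\set{\rho=R}\cap E$, which controls $\inf_{\set{\rho=R}\cap E}G$ by $\sup_{\set{\rho=R}\cap E}G$ with a constant $\Cn_H$ independent of $R$ and bounded as $p\to 1^+$. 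Combining this with the maximum principle (so that cross-sectional suprema of $G(o,\cdot)$ dominate its values on exterior geodesic spheres) and with \cref{lem:asymptoticaldistance}, which interchanges the coordinate spheres $\set{\rho=R}$ and the geodesic spheres $\partial B(o,r)$ for large radii, I obtain $G_p(o,x)\geq (\Cn_H\,\Cn_L)^{1/(p-1)}\d(o,x)^{-(n-p)/(p-1)}$ for all $x$ with $\rho(x)\geq R$. Since the threshold $R$ and the Harnack constant can be taken $p$-independent, this delivers the lower bound on $M\smallsetminus B(o,R)$ with the stated form $\Cn_L=\Cn\,a\,\capa_p(\overline{K};E)$.

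\emph{Main obstacle.} The delicate point is the passage from the supremum estimate of \cref{prop:Holopainen} to a genuine pointwise lower bound while keeping the constant controlled as $p\to 1^+$. The difficulty is that every multiplicative error of the form $(1+o(1))$ — arising both in replacing $\abs{B(o,t)\cap E}$ by the model volume and in matching the two foliations $\set{\rho=R}$ and $\partial B(o,r)$ via \cref{lem:asymptoticaldistance} — gets raised to the power $1/(p-1)$, which blows up as $p\to 1^+$. Taming these factors forces one to exploit that the errors vanish for large radii at a rate that, together with the $p$-stable and $R$-independent Harnack constant of \cref{prop:harnack}, can be absorbed into a single $p$-independent $\Cn$; this is precisely where the uniform Harnack inequality, designed to remain bounded as $p\to 1^+$, is indispensable and constitutes the heart of the argument.
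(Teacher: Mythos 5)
Your proposal is correct and follows essentially the same route as the paper: the upper bound via \cite[Theorem 3.6]{Mari2019} together with the Sobolev inequality of \cref{prop:Sobolev_in_AC}, and the lower bound by feeding \cref{prop:Holopainen} into the uniform Harnack inequality of \cref{prop:harnack} on the cross-sections $\set{\rho=t}$, the maximum principle, and \cref{lem:asymptoticaldistance} to exchange coordinate and geodesic spheres while keeping the constant in the form $\Cn\, a\, \capa_p(\overline{K};E)$ with $\Cn$ independent of $p$. The only detail you leave implicit is the one-line extension of the lower bound from $M\smallsetminus B(o,R)$ to all of $M\smallsetminus\set{o}$ (near the pole, at the price of a constant that may degenerate as $p\to 1^+$), which is exactly how the paper closes its argument.
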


\proof The upper bound in \cref{eq:bound_pgreen} with $\Cn_U$ bounded as $p \to 1^+$ follows from \cite[Theorem 3.6]{Mari2019}. Indeed, the assumptions are satisfied since a  $p$-Sobolev Inequality holds true as a standard consequence of the $L^1$-Sobolev inequality in \cref{prop:Sobolev_in_AC}.
For what it concerns the lower bound, we are in position to apply \cref{prop:Holopainen}. The main issue is that we do not have control on the bounded components of $E \smallsetminus \overline{B(o,R)}$. Consider the function $R:[1,+\infty) \to \R$ defined as 
\begin{equation}
    R(t) = \max \set{ \d(o,x) \st x \in \set{\rho =t}}.
\end{equation}
Observe that $\set{\rho \geq t} \supset E(2R(t))$. Then applying the Harnack's Inequality \eqref{eq:harnackend}, Comparison Principle and \eqref{eq:lower_bound_Green} one gets that
\begin{align}
    \inf_{\set{\rho=t}} G_p(o,x) &\geq \Cn_H^{\frac{1}{p-1}}\sup_{\set{\rho =t}} G_p(o,x) \geq \Cn_H^{\frac{1}{p-1}} \sup_{\partial E(2R(t))} G_p(o,x)\\&\geq \Cn_4^{\frac{1}{p-1}} \int\limits_{4 R(t)}^{+\infty}\left( \frac{r}{\abs{B(o,r)\cap E}}\right)^{\frac{1}{p-1}} \dd r\geq \Cn_5^{\frac{1}{p-1}}  R(t)^{-\frac{n-p}{p-1}}.
\end{align}
for every $t \geq T$, for some $T$ large enough not depending on $p$ and $\Cn_5=\Cn a \capa_p(\partial K;E)$.  By \cref{lem:asymptoticaldistance} there exists $R_1 \geq R(T)$ such that $\partial B(o,r) \subset \set{\rho \leq 2r}$ and $R(2r) \leq 4r$ hold for every $r\geq R_1$. Then, by the Maximum Principle, 
\begin{equation}
    \inf_{\partial B(o,r)} G_p(o,x) \geq  \inf_{\set{\rho = 2r}} G_p(o,x) \geq \Cn_5^{\frac{1}{p-1}} R(2r)^{-\frac{n-p}{p-1}}\geq \left(\frac{\Cn_5}{4^{n-p}}\right)^{\frac{1}{p-1}} r^{-\frac{n-p}{p-1}},
\end{equation}
holds for every $r\geq 1$, since $R_1$ does not depend on $p$. The global lower bound follows since it is satisfied near the pole $o$, but the new constant $\Cn_L$ might go to $0$ as $p\to 1^+$ \endproof

Arguing as done in the proof of \cite[Theorem 2.15]{Benatti2021}, it is easy to derive analogous estimates for $u_p$. We do not take trace of the best constant as in the previous result, since this tool will be only used to derive \cref{thm:asymptoticbehaviourofppotentialC0} where $1<p<n$ is fixed. 

\begin{corollary}[Li-Yau-type estimates for the $p$-capacitary potential]\label{thm:Li-Yau-estimates}
 Let $(M,g)$ be a $\CC^0$-Asymptotically Conical Riemannian manifold with $\Ric$ satisfying \eqref{eq:Ricci-intro}. Let be $1<p<n$. There exists a unique solution $u_p$ to \eqref{pb-intro} and there exists a positive and finite constant $\Cn$ such that
 \begin{equation}\label{eq:LiYau}
     \Cn^{-1} \d(x,o)^{-\frac{n-p}{p-1}} \leq u_p(x) \leq \Cn \d(x,o)^{-\frac{n-p}{p-1}}
 \end{equation}
 for every $x \in M \smallsetminus \Omega$.
\end{corollary}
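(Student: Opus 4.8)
The plan is to deduce the two-sided bound \eqref{eq:LiYau} for $u_p$ from the corresponding bounds \eqref{eq:bound_pgreen} on the $p$-Green function $G_p$ already established in \cref{thm:li-yau}, using the Comparison Principle to sandwich $u_p$ between two multiples of $G_p$. First I would settle existence and uniqueness of $u_p$: \cref{thm:li-yau} asserts that each end is $p$-nonparabolic, whence $M$ is $p$-nonparabolic, and the upper bound in \eqref{eq:bound_pgreen} forces $G_p(o,x)\to 0$ as $\d(o,x)\to+\infty$. Thus the hypotheses of \cref{thm:existence_of_p_potential} are met and $u_p$ exists and is unique.

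For the estimate itself, I would fix the base point $o$ inside $\Omega$; this is harmless, since the distance functions to different base points are comparable at infinity (their ratio tends to $1$) and bounded away from $0$ and $+\infty$ on compact sets avoiding the pole, so the final inequality is insensitive to this choice up to enlarging the constant. With $o\in\Omega$, the function $G_p(o,\cdot)$ is $p$-harmonic on all of $M\smallsetminus\overline\Omega$, with no singularity there. Since $\partial\Omega$ is compact and $G_p(o,\cdot)$ is positive and continuous on it, I may set $\gamma=\min_{\partial\Omega}G_p(o,\cdot)>0$ and $\Gamma=\max_{\partial\Omega}G_p(o,\cdot)<+\infty$, so that $\Gamma^{-1}G_p(o,\cdot)\leq 1=u_p\leq \gamma^{-1}G_p(o,\cdot)$ holds on $\partial\Omega$.

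Both $u_p$ and $G_p(o,\cdot)$ are $p$-harmonic on the exterior domain and vanish at infinity, so comparing them on the bounded domains $(M\smallsetminus\overline\Omega)\cap B(o,R)$ via \cref{thm:comparison_principle} and letting $R\to+\infty$ — the discrepancy on $\partial B(o,R)$ being controlled by the common decay — yields $\Gamma^{-1}G_p(o,x)\leq u_p(x)\leq\gamma^{-1}G_p(o,x)$ on $M\smallsetminus\overline\Omega$. Inserting the bound \eqref{eq:bound_pgreen} converts this into
\[
\frac{\Cn_L^{\frac{1}{p-1}}}{\Gamma}\,\d(o,x)^{-\frac{n-p}{p-1}} \leq u_p(x) \leq \frac{\Cn_U^{\frac{1}{p-1}}}{\gamma}\,\d(o,x)^{-\frac{n-p}{p-1}},
\]
valid outside a large ball. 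On the remaining compact collar between $\partial\Omega$ and that ball, $u_p$ is bounded between positive constants by the maximum principle while $\d(o,x)^{-(n-p)/(p-1)}$ is comparable to a constant, so the inequalities persist after enlarging the constant; absorbing everything into a single $\Cn$ gives \eqref{eq:LiYau}. This is precisely the scheme of \cite[Theorem 2.15]{Benatti2021}, to which the statement alludes.

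The main obstacle I expect is the rigorous justification of the Comparison Principle on the \emph{unbounded} exterior domain, since \cref{thm:comparison_principle} is stated only for bounded $U$; this requires the exhaustion argument above, exploiting that both competitors decay at infinity so that their difference on the outer sphere $\partial B(o,R)$ becomes negligible. A minor technical point is the normalization $o\in\Omega$, which must be reconciled with the fixed $o$ appearing in \eqref{eq:Ricci-intro} by the comparability of distance functions remarked above.
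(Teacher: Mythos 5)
Your proposal is correct and follows essentially the same route as the paper: existence and uniqueness via \cref{thm:existence_of_p_potential} once \eqref{eq:bound_pgreen} gives $p$-nonparabolicity and decay of $G_p$, then sandwiching $u_p$ between constant multiples of $G_p(o,\cdot)$ by the Comparison Principle on the exhausting domains $B(o,R)\smallsetminus\overline{\Omega}$ (with the small additive shift to control the outer boundary), and finally inserting \eqref{eq:bound_pgreen}. The only cosmetic difference is that the paper does not need your separate "compact collar" adjustment, since the Green's function bounds of \cref{thm:li-yau} already hold on all of $M\smallsetminus\{o\}$.
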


\proof The existence follows from \eqref{eq:bound_pgreen} and \cref{thm:existence_of_p_potential}. Moreover, In light of \eqref{eq:bound_pgreen}, it suffices to show that there exists a positive finite constant $\Cn$ such that $\Cn^{-1} G_p \leq u_p \leq \Cn G_p$. Choose any $ \sup_{\partial \Omega} u_p< \Cn$. Then, $\Cn^{-1} G_p < u_p$ on $\partial \Omega$. Moreover, since both $u_p$ and $G_p$ vanish at infinity, for any $\delta > 0$ we have $\Cn^{-1} G_p < u_p + \delta$ on $\partial B(o, R)$ for any $R$ big enough. The Comparison Principle applied to the $p$-harmonic functions $u_p + \delta$ and $G_p$ in $B(o, R) \smallsetminus \overline{\Omega}$ shows that $\Cn^{-1} G_p < u_p + \delta$ in the latter subset. The radius $R$ being arbitrarily big, this implies that, by passing to the limit as $R \to + \infty$, that $\Cn^{-1} G_p < u_p + \delta$  in the whole $M \smallsetminus \Omega$. Letting $\delta \to 0^+$ leaves with $\Cn^{-1} G_p \leq u_p$, and consequently with the lower bound in \eqref{eq:LiYau}. The inequality $u_p \leq \Cn G_p$, yielding the upper bound, is shown the same way. \endproof

We recall the following Cheng-Yau-type estimate, proved in \cite{Wang2010}, together with the consequent gradient bound for the $p$-capacitary potential. 

\begin{theorem}[Cheng-Yau-type estimate]\label{thm:p-cheng-yau}
Let $(M,g)$ a complete Riemannian manifold. Let $v\in W^{1,p}_{\loc}B(o,2R)$ be a positive $p$-harmonic function on a geodesic ball $B(o,2R)$ for some $R>0$ where $\Ric \geq -(n-1)\kappa^2$. Then there exists a constant $\Cn=\Cn(p,n)$ such that
\begin{equation}\label{eq:p-cheng-yau}
    \sup_{B(o,R)} \abs{ \D \log(v)} \leq \Cn\left( \frac{1}{R}+ \kappa\right).
\end{equation}
\end{theorem}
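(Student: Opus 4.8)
The statement is the $p$-harmonic analogue of the classical Cheng--Yau gradient estimate, and the plan is to adapt that scheme to the degenerate operator $\Delta^{\!(p)}_g$. First I would pass to $f=\log v$, which is legitimate since $v>0$; a direct computation from $\Delta^{\!(p)}_g v = 0$ shows that $f$ solves $\Delta^{\!(p)}_g f = -(p-1)\abs{\D f}^p$ on $B(o,2R)$, and since $\abs{\D\log v}=\abs{\D f}$ the problem reduces to bounding $\abs{\D f}$. By \cref{thm:moreregularityp-potential} the function $f$ is smooth on the regular set $\{\D v\neq 0\}$, where the equation is uniformly elliptic; I would carry out all differential computations there and deal with the critical set $\{\D v=0\}$ at the end.

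Next I would introduce $w=\abs{\D f}^2$ and the linearised operator $\mathcal L(\psi)=\div\!\big(\abs{\D f}^{p-2}\big(\D\psi+(p-2)\abs{\D f}^{-2}\langle\D f,\D\psi\rangle\,\D f\big)\big)$, and derive a Bochner--Weitzenb\"ock inequality for $\mathcal L(w)$. Differentiating the equation $\Delta^{\!(p)}_g f=-(p-1)\abs{\D f}^p$ feeds a favourable term of order $w^2$ (weighted by $\abs{\D f}^{p-2}$) into the Bochner identity, the curvature term is bounded below using $\Ric\geq -(n-1)\kappa^2$ by $-(n-1)\kappa^2\,\abs{\D f}^{p-2} w$, and the Hessian term is retained through a refined Kato inequality tailored to $p$-harmonic functions. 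After absorbing the indefinite first-order contributions by Cauchy--Schwarz and Young, I expect an inequality of the schematic form $\mathcal L(w)\geq \alpha(n,p)\,\abs{\D f}^{p-2} w^2-\beta(n,p)\,\kappa^2\abs{\D f}^{p-2} w-(\text{terms in }\langle\D w,\D f\rangle)$ on $\{\D v\neq 0\}$, with $\alpha(n,p)>0$.

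Then I would localise in the Cheng--Yau manner. Using the Laplacian comparison theorem (available thanks to the Ricci lower bound) I would fix a cutoff $\phi$ with $\phi\equiv 1$ on $B(o,R)$, $\spt\phi\subset B(o,2R)$ and $\abs{\D\phi}^2/\phi,\ \abs{\Delta\phi}\leq C(n)/R^2$, and examine $\phi^2 w$. At an interior maximum point $x_0$ of $\phi^2 w$, which I may assume lies in the regular set (if the maximum were attained where $\D v=0$ then $\phi^2 w$ would vanish identically and $v$ would be constant), the conditions $\D(\phi^2 w)(x_0)=0$ and $\mathcal L(\phi^2 w)(x_0)\leq 0$ hold. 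Substituting the Bochner inequality, using $\D w=-w\,\D(\phi^2)/\phi^2$ at $x_0$ to trade every derivative of the cutoff for a factor $C/R$, one is left with an algebraic inequality $\alpha(n,p)\,(\phi^2 w)^2(x_0)\leq C(n,p)\,(R^{-2}+\kappa^2)\,(\phi^2 w)(x_0)$. This yields $(\phi^2 w)(x_0)\leq C(n,p)(R^{-2}+\kappa^2)$, and restricting to $B(o,R)$ where $\phi\equiv 1$ gives $\sup_{B(o,R)}\abs{\D f}^2\leq C(n,p)(R^{-2}+\kappa^2)$; taking square roots and using $\sqrt{a+b}\leq\sqrt a+\sqrt b$ produces \eqref{eq:p-cheng-yau}.

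The main obstacle is the degeneracy of $\Delta^{\!(p)}_g$ along the critical set $\{\D v=0\}$, where $\mathcal L$ loses ellipticity and $w$ need not be $\CC^2$: both the Bochner computation and the second-order maximum-principle test are licit only in the regular set. I would handle this either by showing the relevant maximum is attained where $\D v\neq 0$, or by regularising the operator (replacing $\abs{\D v}^{p-2}$ with $(\abs{\D v}^2+\varepsilon)^{(p-2)/2}$), running the argument with uniform constants, and letting $\varepsilon\to 0^+$. A secondary point is to keep all constants depending only on $n$ and $p$ while correctly balancing the Young inequalities in the Bochner step, since the Kato constant and the weight $\abs{\D f}^{p-2}$ make the bookkeeping genuinely $p$-dependent.
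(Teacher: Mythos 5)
The paper does not prove this statement at all: it is quoted verbatim from \cite{Wang2010} (Wang--Zhang), so the only fair comparison is with the proof given there. Your overall scheme --- pass to $f=\log v$, note $\Delta^{\!(p)}_g f=-(p-1)\abs{\D f}^p$, derive a Bochner-type inequality for $w=\abs{\D f}^2$ under the linearised operator $\mathcal L$, and handle the critical set by regularisation --- matches the first half of that proof, and your computation of the equation for $f$ is correct. The divergence occurs at the localisation step, and there the gap is genuine.

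Your maximum-principle localisation is the Kotschwar--Ni argument, and it does not close under a Ricci lower bound alone. When you evaluate $\mathcal L(\phi^2w)\leq 0$ at the interior maximum and expand, the second-order part of $\mathcal L$ is not the Laplacian but $\abs{\D f}^{p-2}a^{ij}\nabla_i\nabla_j$ with $a^{ij}=g^{ij}+(p-2)\abs{\D f}^{-2}\nabla^if\,\nabla^jf$, so the cutoff enters through $a^{ij}\nabla_i\nabla_j(\phi^2)$, i.e.\ through the \emph{full Hessian} of $\phi$ contracted against the direction $\D f$ --- not merely through $\Delta\phi$. A Ricci lower bound gives Laplacian comparison, hence $\abs{\Delta\phi}\leq C/R^2$, but it gives no pointwise control on $\nabla^2 r(X,X)$ for a single direction $X$, so no cutoff with $\abs{\nabla^2\phi}\leq C/R^2$ can be produced; this is exactly why Kotschwar--Ni had to assume a \emph{sectional} curvature lower bound for the maximum-principle version, and why the statement as written (Ricci only, constant depending only on $n$ and $p$) is proved in \cite{Wang2010} by Moser iteration instead: one multiplies the differential inequality for $w$ by $w^{b}\phi^2$ and integrates by parts, so only $\abs{\D\phi}$ ever appears and the Hessian of the cutoff is never needed. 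Your regularisation $(\abs{\D v}^2+\varepsilon)^{(p-2)/2}$ addresses the degeneracy on $\{\D v=0\}$ but does nothing for this obstruction. To repair the proof you must either replace the pointwise maximum-principle step by the iteration argument, or strengthen the hypothesis to a sectional curvature bound, which would no longer be the theorem being claimed.
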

The above local estimate enables us to provide a global gradient bound for $u_p$, that will be key for its asymptotic analysis.
\begin{proposition}\label{prop:gradient-bound-p-armoniche}
Let $(M,g)$ be a complete $\CC^0$-Asymptotically Conical manifold with $\Ric$ satisfying \eqref{eq:Ricci-intro}. Let $\Omega \subset M$ open bounded with smooth boundary and let $u_p$ be its $p$-capacitary potential. Then, there exists a constant $\Cn>0$ such that
\begin{equation}\label{eq:Asymptotically-gradientbound}
    \abs{\D \log u_p }\leq \frac{\Cn}{\dist(x,o)}
\end{equation}
holds on the whole $M \smallsetminus \Omega$.
\end{proposition}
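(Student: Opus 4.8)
The plan is to split $M\smallsetminus\Omega$ into a compact collar around $\Omega$ and a far region $\set{\dist(x,o)\geq R_1}$ for a suitable fixed $R_1$, control each separately, and then match the two constants. On the collar the bound is soft and follows from the Schauder theory, while the far region is where the hypothesis \cref{eq:Ricci-intro} really enters: there I would apply the Cheng--Yau estimate of \cref{thm:p-cheng-yau} at a scale comparable to $\dist(x,o)$, exploiting the quadratic decay of the Ricci lower bound.

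First I would fix $R_0$ with $\overline{\Omega}\subset B(o,R_0)$ and set $R_1=2R_0$. On the compact set $\overline{(M\smallsetminus\Omega)}\cap\set{\dist(x,o)\leq R_1}$ the potential $u_p$ is continuous and strictly positive (it equals $1$ on $\partial\Omega$ and does not vanish, by the strong maximum principle or directly by the lower bound in \cref{thm:Li-Yau-estimates}), and it is of class $\CC^{1,\beta}$ up to $\partial\Omega$ by \cref{thm:regularityp-potential} together with the boundary regularity recalled in \cref{boundary-regularity}. Hence $\abs{\D\log u_p}=\abs{\D u_p}/u_p$ is bounded by a constant $\Cn_0$ on this compact region. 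Since $\dist(x,o)\leq R_1$ there, this already yields $\abs{\D\log u_p}\leq \Cn_0 R_1/\dist(x,o)$, which is of the desired form.

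The core of the argument is the far region. Given $x$ with $r:=\dist(x,o)\geq R_1$, I would work on the geodesic ball $B(x,r/2)$. Every $y\in B(x,r/2)$ satisfies $\dist(y,o)\geq r/2>R_0$, so $B(x,r/2)$ avoids $\overline{\Omega}$ and $u_p$ is a positive $p$-harmonic function there. The quadratic decay in \cref{eq:Ricci-intro} then gives, for all such $y$,
\begin{equation}
\Ric(y)\geq -\frac{(n-1)\kappa^2}{(1+\dist(y,o))^2}\geq -\frac{4(n-1)\kappa^2}{r^2}=-(n-1)\tilde\kappa^2,\qquad \tilde\kappa:=\frac{2\kappa}{r}.
\end{equation}
Applying \cref{thm:p-cheng-yau} on $B(x,2R)$ with $2R=r/2$ and curvature parameter $\tilde\kappa$ produces
\begin{equation}
\abs{\D\log u_p}(x)\leq \sup_{B(x,R)}\abs{\D\log u_p}\leq \Cn\left(\frac{1}{R}+\tilde\kappa\right)=\Cn\left(\frac{4}{r}+\frac{2\kappa}{r}\right)=\frac{\Cn(4+2\kappa)}{\dist(x,o)}.
\end{equation}
Combining the two regimes and taking the larger constant delivers the bound on all of $M\smallsetminus\Omega$.

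The step I expect to be decisive, and the reason the sharp $1/\dist(x,o)$ rate holds, is the scale matching in the last display: the quadratic decay of the Ricci lower bound forces $\tilde\kappa\sim\kappa/r$, so that the two terms $1/R$ and $\tilde\kappa$ generated by Cheng--Yau decay at the same rate $1/r$. Had the curvature lower bound decayed more slowly, the $\tilde\kappa$ contribution would dominate and the estimate would deteriorate; the hypothesis \cref{eq:Ricci-intro} is calibrated precisely so that the rescaled balls $B(x,r/2)$ are uniformly almost nonnegatively curved at their own scale. The only routine care needed beyond this is keeping the ball inside the region of $p$-harmonicity (guaranteed by $r\geq R_1>2R_0$) and using the positivity of $u_p$ required to form $\log u_p$.
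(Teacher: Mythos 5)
Your proof is correct and follows essentially the same strategy as the paper's: a soft bound on a compact collar via $\CC^{1,\beta}$ regularity and positivity of $u_p$, and, in the far region, the Cheng--Yau estimate of \cref{thm:p-cheng-yau} applied on a ball centred at $x$ of radius comparable to $\dist(x,o)$, where the quadratic decay in \cref{eq:Ricci-intro} makes the effective curvature parameter of order $\kappa/\dist(x,o)$. If anything, your choice of the radius $r/2$ (so that every point of the ball stays at distance at least $r/2$ from $o$, justifying $\tilde\kappa=2\kappa/r$ on the whole ball) is slightly more careful than the paper's, which uses the ball $B(x,\dist(o,x)-R)$ and quotes the curvature bound at the centre only.
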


\begin{proof}
By the $\mathscr{C}^{1}$-regularity of $u$, it clearly suffices to show that \eqref{eq:Asymptotically-gradientbound} holds true outside some compact set containing ${\Omega}$. Let then $o \in \Omega$ and $R > 0$ be such that $\Omega \subset B(o, R)$, and let $x \in M \smallsetminus \overline{B(o, 2R)}$. With this choice, we have $B(x, \dist(o, x) - R) \subset M \smallsetminus \overline{B(o, R)}$. Thus, applying inequality \eqref{eq:p-cheng-yau} to the function $u_p$ in the ball $B(x, \dist(o, x) - R)$ we get
\begin{align}
    \frac{\abs{\D u_p}}{u_p}&\leq 2 \Cn \left(\frac{1}{\dist(o,x)-R}+\frac{\kappa}{\dist(o,x)+1}\right) \leq 2\Cn\frac{(2+ \kappa)}{\dist(o,x)}.
\end{align}
concluding the proof.
\end{proof}

The constant in \cref{eq:p-cheng-yau} and consequently in \cref{eq:Asymptotically-gradientbound} is such that $(p-1)\Cn$ diverges as $p \to 1^+$. Up to the authors' knowledge a Cheng-Yau-type estimate with $(p-1) \mathrm{C}$ controlled in $p$ is yet to be discovered. This should lead to various other insights about the weak IMCF and its relations with $p$-harmonic potentials. 
% In particular, this improvement will permit to extend \cref{thm:asymptotic_behaviour_of_IMCF} to all $\CC^0$-Asymptotically Conical Riemannian manifold, which constitutes the exact analogous of \cref{thm:asymptoticbehaviourofppotentialC0}.

\section{Asymptotic behaviour of the \texorpdfstring{$p$}{p}-capacitary potential}
\label{sec:asy-p}

In this section we prove \cref{thm:asymptoticbehaviourofppotentialC0}. In fact, as anticipated in the Introduction, we prove a more general statement that provides information also about the asymptotic behaviour of the derivatives of $u$, if the asymptotic structure of the underlying metric is suitably reinforced.

\begin{theorem}[Asymptotic behaviour of the derivatives of $p$-capacitary potential]\label{thm:asymptoticbehaviourofpotential}
Let $(M, g)$ be a complete $\CC^{k,\alpha}$-Asymptotically Conical Riemannian manifold for some $\alpha >0$ and $k \in \N$ with $\Ric$ satisfying \cref{eq:Ricci-intro}. Let $E_1, \ldots, E_N$ be the (finitely many) ends of $M$ with respect to the compact $K$ in \cref{def:C0ACintro}. Consider $\Omega \subset M$ be an open bounded subset with smooth boundary and $u:M\smallsetminus \Omega \to \R$ a solution of the problem \eqref{pb-intro}. Then
\begin{equation}\label{eq:asymptotic_p_capacitary_higher_order}
    \abs{\D^\ell u - \left( \frac{ \Cn^{(i)}_p( \Omega)}{ \AVR(g; E_i)}\right)^{\frac{1}{p-1}}\D^\ell \rho^{-\frac{n-p}{p-1}}}_{\hat{g}} =o\left( \rho^{-\frac{n-p}{p-1}-\ell}\right) 
\end{equation}
on $E_i$ as $\dist(o,x)\to +\infty$ for every $i=1, \ldots, N$ and $\ell \leq k+1$.
\end{theorem}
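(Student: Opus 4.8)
The plan is to analyse the blow-down of $u$ along each end. Fix an end $E_i$, write $\beta=\tfrac{n-p}{p-1}$ and $c_i=\big(\Cn^{(i)}_p(\Omega)/\AVR(g;E_i)\big)^{1/(p-1)}$, and for $s\geq 1$ set $u_{(s)}=s^{\beta}\,\omega_s^\ast u$ on the rescaled cone $\big([1/s,+\infty)\times L_i,\,g_{(s)}\big)$, where $g_{(s)}=s^{-2}\omega_s^\ast g$. Since the $p$-Laplace equation is invariant both under constant rescalings of the metric and under multiplication of the solution by constants, each $u_{(s)}$ is $p$-harmonic with respect to $g_{(s)}$; by \cref{lem:convergence} the metrics $g_{(s)}$ converge, as $s\to+\infty$, to the cone metric $\hat g$ in $\CC^{k,\alpha}$ on compact subsets of $(0,+\infty)\times L_i$. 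The model profile is a fixed point of this rescaling: since $\omega_s^\ast\rho^{-\beta}=s^{-\beta}\rho^{-\beta}$, one has $u_{(s)}-c_i\rho^{-\beta}=s^\beta\omega_s^\ast(u-c_i\rho^{-\beta})$.

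First I would extract a limit. By the Li–Yau estimates of \cref{thm:Li-Yau-estimates} together with $\d(o,x)/\rho(x)\to 1$ (\cref{lem:asymptoticaldistance}), the $u_{(s)}$ are bounded above and below away from zero, uniformly in $s$, on every annulus $\{R^{-1}\leq\rho\leq R\}$. The interior Schauder estimates of \cref{thm:regularityp-potential}, applied with the uniformly controlled ellipticity and $\CC^{0,\alpha}$-bounds of $g_{(s)}$, then give uniform $\CC^{1,\beta'}_{\loc}$ bounds, so along any sequence $s\to+\infty$ a subsequence converges in $\CC^{1}_{\loc}$ to some $u_\infty$. By the compactness \cref{thm:compactness_uniform_convergence_p_harmonic} in the form allowing varying metrics (\cref{rmk:compactness_convergence_of_metric}), $u_\infty$ is $p$-harmonic on the cone $\big((0,+\infty)\times L_i,\hat g\big)$, and it inherits the two-sided bound $\Cn^{-1}\rho^{-\beta}\leq u_\infty\leq \Cn\rho^{-\beta}$.

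The core of the argument is to show that every such $u_\infty$ equals $c_i\rho^{-\beta}$. Let $\mathcal L\subset\CC^{1}_{\loc}$ be the set of subsequential limits; from the exact identity $u_{(\lambda s)}=\lambda^\beta\omega_\lambda^\ast u_{(s)}$ it follows that $\mathcal L$ is invariant under $v\mapsto\lambda^\beta\omega_\lambda^\ast v$ for every $\lambda>0$. Let $\overline c=\max_{v\in\mathcal L}\max_{\{\rho=1\}}v$, attained by some $u_\ast\in\mathcal L$ at a point $(1,\vartheta_\ast)$; applying the scaling invariance to $u_\ast$ and using the definition of $\overline c$ yields $u_\ast\leq \overline c\,\rho^{-\beta}$ on the whole cone, with equality at the interior point $(1,\vartheta_\ast)$. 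Since $\overline c\,\rho^{-\beta}$ is smooth with non-vanishing gradient, the Strong Comparison Principle (\cref{thm:comparison_principle}), applied on relatively compact connected pieces and propagated by connectedness, forces $u_\ast\equiv \overline c\,\rho^{-\beta}$; the symmetric argument with the minimum produces a radial $u^\ast\equiv\underline c\,\rho^{-\beta}\in\mathcal L$. Next I would identify the constants through a conserved flux: by $p$-harmonicity and the divergence theorem the quantity $\int_{\{\rho=r\}\cap E_i}\abs{\D u}^{p-2}\langle \D u\,\vert\,\nu\rangle\dd\sigma$ is independent of $r$ and, by \eqref{eq:ends_capacity_decomposition}, equals up to sign $\big(\tfrac{n-p}{p-1}\big)^{p-1}\abs{\Sf^{n-1}}\,\Cn^{(i)}_p(\Omega)$; a scaling computation (for which $\beta(p-1)=n-p$ is crucial) shows the corresponding flux of $u_{(s)}$ for $g_{(s)}$ takes this same value for every $s$, and passing to the limit (the gradients being bounded away from zero near $\{\rho=r\}$) the flux of a radial element $c\,\rho^{-\beta}$ of $\mathcal L$ equals $(c\beta)^{p-1}\abs{L_i}$ up to sign. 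Recalling $\AVR(g;E_i)=\abs{L_i}/\abs{\Sf^{n-1}}$ this gives $c=c_i$, hence $\overline c=\underline c=c_i$; since every $v\in\mathcal L$ satisfies $\underline c\leq\min_{\{\rho=1\}}v\leq\max_{\{\rho=1\}}v\leq\overline c$, each $v$ equals $c_i$ on $\{\rho=1\}$ and therefore, by the Comparison Principle on $\{\rho\geq1\}$ and $\{\rho\leq1\}$ using the two-sided bound, $v\equiv c_i\rho^{-\beta}$. Thus $\mathcal L=\{c_i\rho^{-\beta}\}$ and $u_{(s)}\to c_i\rho^{-\beta}$ in $\CC^{1}_{\loc}$; re-expanding $u_{(s)}(y)=s^\beta u(\omega_s(y))$ on $\{\rho=1\}$ yields the case $\ell=0$ of \eqref{eq:asymptotic_p_capacitary_higher_order}, which is \cref{thm:asymptoticbehaviourofppotentialC0}.

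Finally I would bootstrap to the derivatives. Because $c_i\rho^{-\beta}$ has non-vanishing gradient on each annulus, the $\CC^{1}$-convergence gives $\abs{\D u_{(s)}}_{g_{(s)}}\geq m>0$ there for $s$ large, so the equation is uniformly nondegenerate and the higher-order Schauder estimates of \cref{thm:moreregularityp-potential} (using $\alpha>0$ and the $\CC^{k,\alpha}$-bounds on $g_{(s)}$) provide uniform $\CC^{k+1,\beta'}_{\loc}$ bounds; hence $u_{(s)}\to c_i\rho^{-\beta}$ in $\CC^{k+1}_{\loc}$ and $\D^{\ell}u_{(s)}\to c_i\D^{\ell}\rho^{-\beta}$ uniformly on $\{\rho=1\}$ for every $\ell\leq k+1$. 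Using that $\omega_s$ is a homothety of $\hat g$ with $\omega_s^\ast\hat g=s^2\hat g$, so that $\D$ commutes with $\omega_s^\ast$ and the $\hat g$-norm of a $(0,\ell)$-tensor scales by $s^{\ell}$, the identity $u_{(s)}-c_i\rho^{-\beta}=s^\beta\omega_s^\ast(u-c_i\rho^{-\beta})$ gives, at $x=\omega_s(y)$ with $\rho(y)=1$ and $\rho(x)=s$, the equality $\abs{\D^{\ell}(u-c_i\rho^{-\beta})}_{\hat g}(x)=s^{-\beta-\ell}\abs{\D^{\ell}(u_{(s)}-c_i\rho^{-\beta})}_{\hat g}(y)=o(s^{-\beta-\ell})=o(\rho(x)^{-\beta-\ell})$, which is \eqref{eq:asymptotic_p_capacitary_higher_order}. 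I expect the main obstacle to be the radial-symmetry and uniqueness step: ruling out a non-radial $p$-harmonic blow-down on the cone with the same flux and growth, which is exactly where the scaling invariance of $\mathcal L$, the Strong Comparison Principle, and the conserved flux must be combined; the derivative estimates are then a comparatively routine consequence of nondegenerate Schauder theory and the scaling identity.
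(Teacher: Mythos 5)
Your proposal follows the paper's overall architecture (blow-down $u_{(s)}=s^{(n-p)/(p-1)}\omega_s^\ast u$ on the rescaled cones, compactness from the Li--Yau bounds plus elliptic estimates, radial classification of the limits, identification of the constant, nondegenerate Schauder bootstrap for the derivatives), but the two central steps are carried out by genuinely different arguments. For radiality of the blow-down limits, the paper introduces the eccentricity $R(t)/r(t)$ of the level sets, proves via radial barriers and the comparison principle that it is nonincreasing and scale invariant, and then performs a \emph{second} blow-down of the limit function to force the eccentricity to be identically $1$; you instead exploit the invariance of the set $\mathcal L$ of subsequential limits under $v\mapsto\lambda^{\beta}\omega_\lambda^\ast v$, extract an extremal element of $\mathcal L$ touching the radial barrier $\overline c\,\rho^{-\beta}$ at an interior point, and conclude by the Strong Comparison Principle of \cref{thm:comparison_principle}. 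Both routes hinge on the same barrier and the same strong comparison statement; yours trades the second rescaling for a compactness argument on $\mathcal L$ itself (which does hold, $\mathcal L$ being a closed subset of a compact family). For the constant, the paper sandwiches sublevel sets of the limit between cross-sections and invokes the capacity limit of \cref{lem:limit_p_cap_cross_sections}, which only needs $\CC^0_{\loc}$ convergence; your conserved-flux computation is cleaner and the scaling exponent indeed cancels, but it uses $\CC^1_{\loc}$ convergence of $u_{(s)}$ and hence the hypothesis $\alpha>0$. This is immaterial for \cref{thm:asymptoticbehaviourofpotential}, though it means your argument would not reprove the purely $\CC^0$ statement of \cref{thm:asymptoticbehaviourofppotentialC0}, which the paper's route does in one stroke. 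The derivative bootstrap coincides with the paper's final step, and you are more careful than the paper about how the $\hat g$-norms of $\ell$-th covariant derivatives scale under $\omega_s$.

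One step as written does not go through. Having shown $\overline c=\underline c=c_i$, you deduce $v\equiv c_i\rho^{-\beta}$ for a general $v\in\mathcal L$ ``by the Comparison Principle on $\{\rho\geq1\}$ and $\{\rho\leq1\}$ using the two-sided bound''. On the inner piece $\{\rho\leq 1\}$ both $v$ and the barrier blow up as $\rho\to 0$, and the datum on $\{\rho=\epsilon\}$ is only controlled by $\Cn\,\epsilon^{-\beta}$ with $\Cn$ possibly much larger than $c_i$, so the comparison on $\{\epsilon\leq\rho\leq1\}$ does not close as $\epsilon\to 0$. The repair is immediate and already in your toolkit: apply the scale invariance of $\mathcal L$ once more. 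Since $\lambda^{\beta}\omega_\lambda^\ast v\in\mathcal L$, one has $\underline c\leq \lambda^{\beta}\min_{\{\rho=\lambda\}}v\leq\lambda^{\beta}\max_{\{\rho=\lambda\}}v\leq\overline c$ for every $\lambda>0$, hence $v=c_i\rho^{-\beta}$ on every cross-section and therefore everywhere, with no comparison on the punctured cone needed.
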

For future reference we want to specify the behaviour of partial derivatives coming from \cref{eq:asymptotic_p_capacitary_higher_order}. Given a coordinate system $(\vartheta^1, \ldots, \vartheta^{n-1})$ on $L$ one has that
\begin{equation}
    \frac{\partial^{j+\abs{\alpha}} u }{\partial \rho^j\partial \theta^\alpha} =  \left( \frac{ \Cn^{(i)}_p( \Omega)}{ \AVR(g; E_i)}\right)^{\frac{1}{p-1}}  \frac{\partial^{j+\abs{\alpha}} }{\partial \rho^j\partial \theta^\alpha} \left( \rho^{-\frac{n-p}{p-1}}\right)+ o \left( \rho^{-\frac{n-p}{p-1}- j}\right)
\end{equation}
as $\rho \to +\infty$, where $\alpha$ is a $(n-1)$-dimensional multi-index such that $j+\abs{\alpha}\leq k$.

% A direct byproduct of the proof also yields the following asymptotic behaviour of the $(n-1)$-dimensional Hausdorff measure of the level sets of $u$. 

Along with the proof, we extend \cref{lem:limit_p_cap_cross_sections}, showing that the $p$-capacity of the $p$-capacitary potential behaves like the $p$-capacity of the cross sections approaching infinity.

\begin{proposition}[Asymptotic behaviour of the $p$-capacity of level sets]
\label{prop:p-capacity_level_sets}
In the same assumptions and notations of \cref{thm:asymptoticbehaviourofppotentialC0}, set, for $i = 1, \dots, N$, 
\begin{equation}
v_i = \left(\frac{\Cn^{(i)}_p(\Omega)}{\AVR(g; E_i)}\right)^{\frac{1}{n-p}} u^{-\frac{p-1}{n-p}}.
\end{equation}
Then, we have
\begin{equation}
\lim_{s \to + \infty} \frac{\capa_p(\set{v_i\leq s} \cap E_i;E_i)}{s^{n-p} \abs{\mathbb{S}^{n-p}}} = \left(\frac{n-p}{p-1}\right)^{p-1} \AVR(g; E_i).
\end{equation}
\end{proposition}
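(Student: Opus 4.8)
The plan is to exploit that $v_i$ is a strictly decreasing function of the potential $u$, so that each superlevel set of $v_i$ is a sublevel set of $u$, whose relative $p$-capacity is controlled by the flux identity \eqref{eq:ends_capacity_decomposition} already at our disposal. Set $c_i = (\Cn^{(i)}_p(\Omega)/\AVR(g;E_i))^{1/(n-p)}$, so that $v_i = c_i\,u^{-(p-1)/(n-p)}$. A one-line manipulation then gives, for every $s>0$, that $\set{v_i\leq s}\cap E_i = \set{u\geq 1/t(s)}\cap E_i$ with $t(s)=(s/c_i)^{(n-p)/(p-1)}$. First I would take $s$ large enough that $t(s)\geq T$, so that the bounding level set $\set{u=1/t(s)}$ lies in the conical region of $E_i$, beyond $K$, where all the subsequent flux computations are valid.

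The second step is to evaluate $\capa_p(\set{u\geq 1/t}\cap E_i;E_i)$. Its minimiser is the function equal to $1$ on the inner region $\set{u\geq 1/t}\cap E_i$ and equal to $\min(t\,u,1)=t\,u$ on the outer region $\set{u<1/t}\cap E_i$, obtained exactly as the relative capacitary potential in \cref{cap-u-prop}; since it is constant on the inner region only the outer part contributes and $\capa_p(\set{u\geq1/t}\cap E_i;E_i)=t^p\int_{\set{u<1/t}\cap E_i}\abs{\D u}^p\dd\mu$. By the coarea formula and the constancy of the flux $\int_{\set{u=\tau}\cap E_i}\abs{\D u}^{p-1}\dd\sigma$ for $0<\tau\leq 1/T$ --- the divergence-theorem fact underlying \eqref{eq:pcapi} and \cref{cap-u-prop} --- this equals $t^{p-1}\,(\tfrac{n-p}{p-1})^{p-1}\abs{\S^{n-1}}\,\Cn^{(i)}_p(\Omega)$. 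Equivalently, passing between the normalised and unnormalised relative capacity, this is the end-wise scaling recorded in \eqref{eq:ends_capacity_decomposition}.

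It then remains to substitute $t(s)^{p-1}=(s/c_i)^{n-p}=s^{n-p}\,\AVR(g;E_i)/\Cn^{(i)}_p(\Omega)$: the factor $\Cn^{(i)}_p(\Omega)$ cancels and I am left with $\capa_p(\set{v_i\leq s}\cap E_i;E_i)=(\tfrac{n-p}{p-1})^{p-1}\abs{\S^{n-1}}\,s^{n-p}\,\AVR(g;E_i)$, so that the ratio in the statement equals $(\tfrac{n-p}{p-1})^{p-1}\AVR(g;E_i)$ for every sufficiently large $s$; in particular the claimed limit is attained exactly (with the sphere $\S^{n-1}$ as the normalising factor, as in \cref{lem:limit_p_cap_cross_sections}). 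The choice of the constant $c_i$ is precisely what makes $\Cn^{(i)}_p(\Omega)$ disappear, so that the end-relative capacity of the level sets of $u$ reproduces the cross-section computation of \cref{lem:limit_p_cap_cross_sections}, with $\AVR(g)$ replaced by $\AVR(g;E_i)$; geometrically this is transparent because \cref{thm:asymptoticbehaviourofppotentialC0} yields $v_i=\rho\,(1+o(1))$ on $E_i$, so that $\set{v_i\leq s}$ is a perturbation of the cross-section domain $\set{\rho\leq s}$.

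I do not expect a deep obstacle: the statement is essentially the end-wise scaling/flux identity, rewritten through the algebraic normalisation $c_i$. The two points that need genuine care are the bookkeeping between the normalised capacity $\Cn_p(\,\cdot\,;E_i)$ and the unnormalised $\capa_p(\,\cdot\,;E_i)$ appearing in the statement, and the verification that $\min(t\,u,1)$ is truly the minimiser for the end-relative capacity. The latter requires that the inner region $\set{u\geq 1/t}\cap E_i$ contain a full neighbourhood of $\partial E_i$ inside $E_i$ --- which holds once $t\geq T$, since then $\set{u\geq1/t}$ contains $K$ --- so that no energy is charged across $\partial E_i$, together with the constancy of the flux, valid exactly because the relevant level sets are genuine cross-sections of the conical end.
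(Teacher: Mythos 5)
Your argument is correct, and it reaches the conclusion by a genuinely more direct route than the paper's. You observe that, thanks to the choice of the constant $c_i=\bigl(\Cn^{(i)}_p(\Omega)/\AVR(g;E_i)\bigr)^{1/(n-p)}$, the claimed limit is in fact an \emph{exact identity} for every $s$ large enough that $t(s)\geq T$: rewriting $\set{v_i\leq s}\cap E_i=\set{u\geq 1/t(s)}\cap E_i$ and invoking the end-wise scaling of the relative capacity (the last equality in \eqref{eq:ends_capacity_decomposition}, which is precisely the flux--coarea computation you spell out), the factor $\Cn^{(i)}_p(\Omega)$ cancels and the ratio equals $\left(\tfrac{n-p}{p-1}\right)^{p-1}\AVR(g;E_i)$ identically; no blow-down analysis is needed and the result is independent of \cref{thm:asymptoticbehaviourofppotentialC0}. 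The paper instead obtains the proposition as a byproduct of the third step of the proof of \cref{thm:asymptoticbehaviourofppotentialC0}: the locally uniform convergence $u_{s_k}\to\gamma\rho^{-(n-p)/(p-1)}$ yields the squeeze $\set{\rho\leq s_k/(1+\e)}\subset\set{v\leq s_k}\subset\set{\rho\leq s_k/(1-\e)}$, and monotonicity of the capacity, \cref{lem:limit_p_cap_cross_sections} and the same scaling identity are combined to determine $\gamma$, after which the proposition drops out. The two arguments are two readings of one identity: the paper uses the known asymptotics of $\capa_p(\set{\rho\leq r})$ to find the unknown $\gamma$, while you use the explicit constant built into $v_i$ to compute $\capa_p(\set{v_i\leq s}\cap E_i;E_i)$ exactly. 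What your route buys is a shorter proof of a stronger statement (equality for all large $s$, not merely a limit); what it leans on is the unproved-in-the-paper last equality of \eqref{eq:ends_capacity_decomposition}, and your justification of it --- that $\min(tu,1)$ is the relative capacitary minimiser, together with the constancy of the flux $\int_{\set{u=\tau}\cap E_i}\abs{\D u}^{p-1}\dd\sigma$ for $\tau\leq 1/T$, valid because the level sets then avoid $\partial E_i$ --- is the right one and matches the argument behind \cref{cap-u-prop} (modulo the routine truncation/mollification needed to turn $\min(tu,1)$ into an admissible compactly supported competitor). You are also right to read the normalising factor $\abs{\mathbb{S}^{n-p}}$ in the statement as a typo for $\abs{\mathbb{S}^{n-1}}$, consistently with \cref{lem:limit_p_cap_cross_sections}.
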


Moreover, as a byproduct, we obtain the following uniqueness result on Riemannian cones.

\begin{proposition}\label{prop:uniqueness_on_cones}
Let $((0,+\infty) \times L , \hat{g})$ be a  Riemannian cone with nonnegative Ricci curvature where $L$ is a closed connected smooth hypersurface. Let $u$ be a nonnegative $p$-harmonic function on $(0,+\infty) \times L$ satisfying $u(x) \leq \Cn \rho(x)^{-(n-p)/(p-1)}$ for every $x \in (0,+\infty) \times L$ for some constant $\Cn\geq 0$. Then, there exists a nonnegative $\gamma \in \R$ such that
\begin{equation}
    u(x)= \gamma \rho(x)^{-\frac{n-p}{p-1}}
\end{equation}
holds on $(0,+\infty)\times L$.
\end{proposition}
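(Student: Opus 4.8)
The plan is to isolate the optimal constant in the radial upper barrier and then to exploit the exact scale invariance of the cone, together with the Strong Comparison Principle, to promote the one–sided bound into an identity. Set $\beta=\tfrac{n-p}{p-1}$ and recall that $\rho^{-\beta}$ is $p$-harmonic on $((0,+\infty)\times L,\hat g)$ with $\abs{\D\rho^{-\beta}}_{\hat g}=\beta\rho^{-\beta-1}>0$ everywhere; this is the radial potential already used in \cref{lem:limit_p_cap_cross_sections}. I would then set $\gamma=\sup_{(0,+\infty)\times L}\rho^{\beta}u$, which lies in $[0,\Cn]$ by hypothesis, so that $u\le\gamma\rho^{-\beta}$. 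If $\gamma=0$ then $u\equiv 0$ and the conclusion holds with $\gamma=0$; hence assume $\gamma>0$. Since $u\in\CC^1$, while $v:=\gamma\rho^{-\beta}\in\CC^\infty$ has nonvanishing gradient and $u\le v$, the Strong Comparison Principle (\cref{thm:comparison_principle}) applied on balls yields the dichotomy: \emph{either} $u\equiv v$ on the connected cone, \emph{or} $u<v$ everywhere. It therefore suffices to exhibit a single interior point at which $u$ touches $v$.

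Such a touching point is manufactured by a blow–up/blow–down procedure. Choose $x_k=(\rho_k,\theta_k)$ with $\rho_k^{\beta}u(x_k)\to\gamma$. If the $\rho_k$ remain in a compact subset of $(0,+\infty)$, compactness of $L$ produces an interior maximiser and we are done; otherwise $\rho_k\to 0$ or $\rho_k\to+\infty$, and I would consider the rescalings $\tilde u_k=\rho_k^{\beta}\,(u\circ\omega_{\rho_k})$, with $\omega_s$ as in \eqref{eq:dilatation}, so that $\tilde u_k(\rho,\theta)=\rho_k^{\beta}u(\rho_k\rho,\theta)$. Because $\omega_s^*\hat g=s^2\hat g$ and $p$-harmonicity is preserved both under constant rescalings of the metric and under multiplication by positive constants (cf. \cref{lem:convergence} and \cref{rmk:compactness_convergence_of_metric}), each $\tilde u_k$ is a nonnegative $p$-harmonic function on the cone with $\tilde u_k\le\gamma\rho^{-\beta}$ and $\tilde u_k(1,\theta_k)\to\gamma$. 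The interior Schauder estimates of \cref{thm:regularityp-potential}, whose constants are uniform on a fixed compact annulus since all $\tilde u_k$ solve the same equation for the fixed metric $\hat g$, give $\CC^1_{\loc}$–precompactness; by the Compactness Theorem (\cref{thm:compactness_uniform_convergence_p_harmonic}) a subsequential limit $\bar u$ is $p$-harmonic, satisfies $0\le\bar u\le\gamma\rho^{-\beta}$, and, after $\theta_k\to\theta_\infty$, obeys $\bar u(1,\theta_\infty)=\gamma$. Thus $\bar u$ touches $\gamma\rho^{-\beta}$ at the interior point $(1,\theta_\infty)$, and \cref{thm:comparison_principle} forces $\bar u\equiv\gamma\rho^{-\beta}$.

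The remaining task is to descend from this rigidity of the rescaled limit to $u$ itself, which I expect to be the main obstacle. The mechanism is the exterior comparison used in the proof of \cref{thm:Li-Yau-estimates}: once a cross section $\{\rho=r_0\}$ carries $u\ge(\gamma-\varepsilon)r_0^{-\beta}$, comparing $u$ with $(\gamma-\varepsilon)\rho^{-\beta}$ on $\{\rho\ge r_0\}$ — both $p$-harmonic and vanishing at infinity — propagates it to $u\ge(\gamma-\varepsilon)\rho^{-\beta}$ on all of $\{\rho\ge r_0\}$. When the supremum defining $\gamma$ is approached at the \emph{tip} ($\rho_k\to0$), the convergence $\tilde u_k\to\gamma\rho^{-\beta}$ furnishes such cross sections $\{\rho=\rho_k\}$ with $\rho_k\to0$, so the propagated inequality exhausts the whole cone and gives $u\ge\gamma\rho^{-\beta}$, hence $u\equiv\gamma\rho^{-\beta}$. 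The delicate case is when $\gamma$ is attained only at infinity, for there the very same argument yields only $\rho^{\beta}u\to\gamma$ uniformly as $\rho\to+\infty$, and one must analyse the tip separately.

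To treat the tip I would first note that if $u$ were bounded near $\{\rho=0\}$ its singularity would be removable (as $p<n$), so $u$ would extend to a nonnegative $p$-harmonic function on the complete cone vanishing at infinity and would therefore vanish identically by the maximum principle, contradicting $\gamma>0$; hence $u$ is genuinely singular at the tip. An isolated–singularity analysis of Kichenassamy--Véron type — the Euclidean prototype of which is recalled in the Introduction — then gives $\rho^{\beta}u\to\gamma_0\in(0,\gamma]$ as $\rho\to0$, and the constancy in $r$ of the flux $\int_{\{\rho=r\}}\abs{\D u}^{p-2}\,\partial_\nu u\,\dd\sigma$ (the divergence theorem, as in \cref{cap-u-prop}, with $\nu$ the outer unit normal to $\{\rho=r\}$), evaluated through the matched radial asymptotics at the two ends, forces $\gamma_0=\gamma$. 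A final exterior comparison from the cross sections $\rho_k\to0$, exactly as above, then yields $u\ge\gamma\rho^{-\beta}$ and closes the argument. The heart of the matter is precisely this two–ended bookkeeping: the scale invariance of the cone and the Strong Comparison Principle settle every configuration except the matching of the tip and the infinity constants, which is where the isolated–singularity analysis and the flux identity are indispensable.
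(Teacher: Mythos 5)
Your first two configurations are handled correctly: if $\rho^{\beta}u$ (with $\beta=\tfrac{n-p}{p-1}$) attains its supremum $\gamma$ at an interior point, or if it is approached along $\rho_k\to 0$, your touching-point/Strong Comparison and exterior-comparison arguments do close. The gap is exactly where you locate it, in the case where $\gamma$ is approached only along $\rho_k\to+\infty$, and the tools you invoke there do not fill it. A Kichenassamy--V\'eron isolated-singularity classification is a statement about $p$-harmonic functions near a point of $\R^n$, i.e.\ at the tip of the cone over the \emph{round} sphere; at the tip of a cone over a general link $L$ the metric completion is not a manifold, and no such classification is proved in the paper or available off the shelf, so the existence of the limit $\gamma_0=\lim_{\rho\to 0}\rho^{\beta}u$ cannot be cited. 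Even granting it in $\CC^0$, the flux identity only forces $\gamma_0=\gamma$ if $\D u$ is asymptotically radial with the matching coefficient near the tip, i.e.\ if the tip blow-ups converge in $\CC^1$ to a single radial profile --- which is precisely the uniqueness-of-tangent statement you are trying to prove. (The removability claim for a bounded $u$ near the tip has the same defect, though that step can be repaired by comparison with $\epsilon\rho^{-\beta}$.) As written the proof is therefore incomplete.

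For the record, the paper's own argument is structurally different and never needs a touching point or any tip analysis: it introduces the eccentricity $t\mapsto R(t)/r(t)$ of the smallest annulus $[r(t),R(t)]\times L$ containing $\set{u=1/t}$, shows by the radial barriers that it is nonincreasing and scale invariant, blows up toward the tip to obtain a limit with \emph{constant} eccentricity equal to $\sup_t R(t)/r(t)$, and uses the Strong Comparison Principle to rule out a constant value strictly greater than $1$; hence every level set of $u$ is a cross-section and $u$ is radial. Your remaining case can in fact also be closed without singularity theory: there you already know $\inf_{\set{\rho=s}}s^{\beta}u\to\gamma$ as $s\to+\infty$; if one had $\limsup_{r\to 0}\sup_{\set{\rho=r}}r^{\beta}u=\gamma_0'<\gamma$, comparing $u$ on $\set{r\le\rho\le R}$ with the $p$-harmonic radial barrier $a+b\rho^{-\beta}$ matching the boundary suprema and letting $R\to+\infty$ would give $u\le(\gamma_0'+\epsilon)\rho^{-\beta}$ on $\set{\rho\ge r}$, contradicting the behaviour at infinity; hence the supremum is also approached along $\rho\to 0$ and your second configuration applies. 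Some such argument needs to be supplied before the proof is complete.
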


\proof[Proof of \cref{thm:asymptoticbehaviourofppotentialC0,thm:asymptoticbehaviourofpotential,prop:uniqueness_on_cones,prop:p-capacity_level_sets}] It is enough to prove the theorems in the case $M$ has only one end. The proof of the general case then follows applying the result to each end. We will denote by $g^{}_{(s)}$ the metric $s^{-2} \omega_s^* g$ on $[1/s,+\infty)\times L$, being $\omega_s$ the family of diffeomorphisms defined in \cref{eq:dilatation}. We find convenient to organise the proof in four steps. The first three steps are devoted do prove \cref{thm:asymptoticbehaviourofppotentialC0}. The second and the third one contain the proof of \cref{prop:uniqueness_on_cones} and \cref{prop:p-capacity_level_sets} respectively. In the last we complete the proof for higher order asymptotic behaviour mainly using Schauder estimates for $p$-harmonic functions.

\step\label{step:stepAsymptoticTheorem}
Suppose that $(M,g)$ is $\CC^{0}$-Asymptotically Conical. Define the family of functions $u_s:[1/s,+\infty) \times L\to \R$ as
\begin{equation}
    u_s(x)= s^{\frac{n-p}{p-1}} u\circ\omega_s(x)
\end{equation}
where $\omega_s$ is the map defined in \cref{eq:dilatation}. The aim of this step is to prove compactness of $(u_s)_{s\geq 1}$ with respect to local uniform convergence on $(0,+\infty) \times L$. In particular, by \cref{thm:compactness_uniform_convergence_p_harmonic} (see also \cref{rmk:compactness_convergence_of_metric}) any limit point $w$ for the sequence $(u_s)_{s \geq 1}$ is $p$-harmonic with respect to the metric $\hat{g}$ on $(0,+\infty) \times L$. Moreover, there exists a positive constant $\Cn$ such that
\begin{equation}\label{eq:Li-Yau-P-armonic-limit}
    \Cn^{-1} \rho(x)^{-\frac{n-p}{p-1}} \leq w(x) \leq \Cn \rho(x)^{-\frac{n-p}{p-1}}
\end{equation}
is satisfied for every $x \in (0,+\infty) \times L$.

By \cref{thm:Li-Yau-estimates} we have that
	\begin{equation}\label{eq:holopainenconsequence}
	    \Cn_1^{-1} \left(\vphantom{\sum}\dist(o,x)\right)^{-\frac{n-p}{p-1}} \leq u(x) \leq  \Cn_1 \left(\vphantom{\sum}\dist(o,x)\right)^{-\frac{n-p}{p-1}}.
	\end{equation}
holds on $M \smallsetminus \Omega$. In particular, since by \cref{lem:asymptoticaldistance} the distance function from $o$ behaves asymptotically as the coordinate $\rho$, we deduce that there exist $S_2,\Cn_2>0$ such that 
    \begin{equation}\label{eq:li-yau-u_s}
      \Cn_2^{-1} \rho(x)^{-\frac{n-p}{p-1}} \leq u_s(x) \leq \Cn_2 \rho(x)^{-\frac{n-p}{p-1}},
    \end{equation}
holds on $[1/s,+\infty)\times L$ for every $s\geq S_2$. In particular, $(u_s)_{s\geq 1}$ is equibounded.
By the gradient estimate \cref{prop:gradient-bound-p-armoniche}
\begin{equation}
    \abs{ \D u}(x) \leq \Cn_3 u(x)^{\frac{n-1}{n-p}} \leq \Cn_4 \left(\vphantom{\sum} \dist(o,x) \right)^{-\frac{n-1}{p-1}}
\end{equation}
for some positive constants $\Cn_3,\Cn_4$. Hence, employing again \cref{lem:asymptoticaldistance} there exist $S_5, \Cn_5>0$ such that
\begin{equation}\label{eq:bound_on_gradient_hatg}
\abs{\D u_s}_{g_{(s)}}(x) \leq \Cn_5 \rho(x)^{-\frac{n-1}{p-1}}
\end{equation}
holds on $[1/s,+\infty)\times L$ for every $s \geq S_5$. By \cref{lem:convergence} we have that for some $\varepsilon>0$ there is $S_6>0$ such that
\begin{equation}
    \abs{\D u_s}_{\hat{g}} \leq (1+ \varepsilon) \abs{ \D u_{s}}_{g_{(s)}}
\end{equation}
holds for every $s \geq S_6$. Combining it with \cref{eq:bound_on_gradient_hatg} we obtain that the family $(u_s)_{s \geq 1}$ is equicontinuous. By Arzelà-Ascoli Theorem we conclude that $(u_s)_{s \geq 1}$ is precompact with respect to the local uniform convergence. \cref{eq:Li-Yau-P-armonic-limit} follows from \cref{eq:li-yau-u_s}.

\step\label{step:gammadefconvergence} Here we prove that any limit point $v$ of the family $(u_s)_{s \geq 1}$ has the form
\begin{equation}\label{eq:uniqueness_proof}
    v(x)= \gamma \rho (x)^{-\frac{n-p}{p-1}},
\end{equation}
for some nonnegative $\gamma \in \R$, proving also \cref{prop:uniqueness_on_cones}.
Let $v:(0,+\infty) \times L \to \R $ be a nonnegative $p$-harmonic function satisfying the bound $v(x) \leq \Cn \rho(x)^{-\frac{n-p}{p-1}}$ on $(0,+\infty) \times L$. 

Define the function $\ecc_v:(0,+\infty)\to \R$ as
\begin{equation}\label{eq:eccentricity}
    \ecc_v(t) = \frac{R(t)}{r(t)},
\end{equation}
where $[r(t),R(t)] \times L$ is the smallest annulus containing $\set{v=1/t}$ for every $t \in (0,+\infty)$.
Observe that, $\ecc_v(t)\geq 1$ and $\ecc_v(t)=1$ if and only if $\set{v=1/t}$ is a cross-section of the cone. By the Comparison Principle \cref{thm:comparison_principle}, using the potentials of $\set{\rho=r(t)}$ and $\set{\rho =R(t)}$ as barriers, we have that
\begin{equation}\label{eq:comparison-control}
    r(t)\left( \frac{t}{T}\right)^{\frac{n-p}{p-1}} \leq \rho(x) \leq R(t)\left( \frac{t}{T}\right)^{\frac{n-p}{p-1}} 
\end{equation}
holds for every $x \in \set{v=1/T}$ for every $T \geq t$. Hence, $\ecc_v$ is nonincreasing. Moreover, since $(0,+\infty) \times L$ is connected, $\rho(x)^{-\frac{n-p}{p-1}}$ is $p$-harmonic and $\CC^2((0,+\infty) \times L)$ and $\abs{ \D \rho^{-\frac{n-p}{p-1}}} \geq \frac{n-p}{p-1} R^{-\frac{n-1}{p-1}}$ holds on $(0,R) \times L$ for every $R>0$, by the Strong Comparison Principle \cref{thm:comparison_principle} the inequalities in \cref{eq:comparison-control} are strict unless $\set{v=t}$ is a cross-section. It is not hard to see that $\ecc_v$ is scale invariant

Consider for $s \leq 1$ the family $v_s:[1,+\infty) \times L\to \R$ defined as
\begin{equation}\label{eq:blowup-sequence}
    v_s(x)= s^{\frac{n-p}{p-1}} v \circ \omega_s(x)
\end{equation}
where $\omega_s$ is defined in \eqref{eq:dilatation}. 
 Using the same argument of \cref{step:stepAsymptoticTheorem} we have that
\begin{align}
    v_s(x) \leq \Cn \rho(x)^{-\frac{n-p}{p-1}} &&\text{and}&& \abs{ \D v_s}(x) \leq \Cn \rho(x)^{-\frac{n-1}{p-1}}
\end{align}
holds on $(0,+\infty) \times L$ for some constant $\Cn>0$. Hence, appealing to the Arzelà-Ascoli Theorem, $(v_s)_{s\leq 1}$ is precompact with respect to the local uniform convergence. Let $w$ be a limit point for $(v_s)_{s \leq 1}$. Since $\ecc$ is scale invariant, $\ecc_{v_s}(t)= \ecc_{v}(t/s)$. Then, $\ecc_w(t)$ is constant equal to some $\ecc_w \in [1,+\infty)$ that by monotonicity satisfies $\ecc_w= \sup_t \ecc_v(t) \in [1,+\infty)$. Suppose by contradiction that $\ecc_w>1$. Then the level $\set{w=1} \subset [r(1), \ecc_w r(1)] \times L$ and $\set{w=1}$ touches both the cross-sections $\set{\rho=r(1)}$ and $\set{\rho =\ecc_wr(1)}$ without being equal to either one. By \cref{eq:comparison-control} and the Strong Comparison Principle \cref{thm:comparison_principle}
\begin{equation}
    r(1) t^{\frac{p-1}{n-p}} < \rho(x) < \ecc_w r(1) t^{\frac{p-1}{n-p}}
\end{equation}
holds for every $x \in \set{w=1/t}$ for every $t>1$. We therefore have that $\ecc_w(t) < \ecc_w$ which is a contradiction. In conclusion $\ecc_w$ must be $1$ and since $ 1 \leq \ecc_v(t) \leq \ecc_w=1$, $v$ is as in \eqref{eq:uniqueness_proof}.

\step By \cref{step:gammadefconvergence}, any limit point $w$ for the sequence $(u_{s})_{s \geq 1}$ given by \cref{step:stepAsymptoticTheorem} has the form $\gamma \rho^{-\frac{n-p}{p-1}}$, where $\gamma >0$ by \eqref{eq:Li-Yau-P-armonic-limit}. We are now going to prove that
\begin{equation}\label{eq:gamma-definition}
    \gamma= \Cn_p(\Omega)^{\frac{1}{p-1}}\AVR(g)^{-\frac{1}{p-1}}.
\end{equation}
The characterisation \eqref{eq:gamma-definition} ensures that any converging subsequence admits the same limit, proving that the whole family $(u_s)_{s\geq 1}$ locally uniformly converges to $\gamma \rho^{-\frac{n-p}{p-1}}$ as $s \to +\infty$. In particular, for every $\varepsilon>0$ there exists a $S\geq 1$ such that
\begin{equation}
\label{u-gamma}
\begin{split}
     \sup_{\set{\rho =s}}s^{\frac{n-p}{p-1}}\abs{u -\left(\frac{\Cn_p(\Omega)}{\AVR(g)}\right)^{\frac{1}{p-1}}\rho^{-\frac{n-p}{p-1}}}&= \sup_{\set{\rho=1}}\abs{u_s -\left(\frac{\Cn_p(\Omega)}{\AVR(g)}\right)^{\frac{1}{p-1}}\rho^{-\frac{n-p}{p-1}}}\leq \varepsilon
\end{split}
\end{equation}
for every $s\geq S$, proving \cref{thm:asymptoticbehaviourofppotentialC0,prop:p-capacity_level_sets}.

Observe that $\gamma>0$ by \cref{thm:Li-Yau-estimates}. In order to prove \eqref{eq:gamma-definition}, we find convenient to work with the auxiliary function
\begin{equation}\label{eq:auxiliaryfunction}
v= \left( \frac{u}{\gamma}\right)^{-\frac{p-1}{n-p}}.
\end{equation}
Since $w$ is a limit point for the family $(u_s)_{s \geq 1}$ there is a subsequence $(u_{s_k})_{k \in \N}$, $s_k$ increasing and divergent as $k\to +\infty$, such that $u_{s_k}\to v = \gamma \rho^{-\frac{n-p}{p-1}}$ locally uniformly on $(0,+\infty) \times L$ as $k \to +\infty$. Then for any $\varepsilon>0$ there exists $k_\varepsilon \in \N$ such that
\begin{equation}
    \set{ \rho \leq\frac{s_k}{1+\varepsilon}} \subset \set{ v \leq s_k} \subset \set{\rho \leq\frac{s_k}{1-\varepsilon}}
\end{equation}
By the monotonicity of the $p$-capacity with respect to the inclusion, we have that
\begin{equation}
    \capa_p\left(\set{ \rho \leq\frac{s_k}{1+\varepsilon}}\right)\leq \capa_p\left(\set{v \leq s_k}\right) \leq \capa_p\left(\set{\rho \leq\frac{s_k}{1-\varepsilon}}\right)
\end{equation}
By \cref{eq:scaling-invariant-cap} we can compute the capacity of level sets of $v$ in terms of the capacity of $\partial \Omega$, that is
\begin{equation}
    \capa_p\left(\set{ \rho \leq\frac{s_k}{1+\varepsilon}}\right)\leq\gamma^{-(p-1)} s_k^{n-p} \capa_p\left( \Omega\right) \leq \capa_p\left(\set{\rho \leq\frac{s_k}{1-\varepsilon}}\right)
\end{equation}
Dividing each side by $\abs{\S^{n-1}}s_k^{n-p}$, sending $k \to +\infty$ and using \cref{lem:limit_p_cap_cross_sections} we infer that
\begin{equation}
    \left(\frac{n-p}{p-1}\right)^{p-1}\frac{ \AVR(g)}{(1+\varepsilon)^{n-p}}\leq \gamma^{-(p-1)} \frac{\capa_p( \Omega) }{\abs{\S^{n-1}}}\leq \left(\frac{n-p}{p-1}\right)^{p-1}\frac{\AVR(g)}{(1-\varepsilon)^{n-p}}
\end{equation}
Then \cref{eq:gamma-definition} follows by arbitrariness of $\varepsilon>0$, keeping in mind the characterisation of $\AVR(g)$ in \eqref{def:AVRconico} and the relation between the $p$-capacity and the normalised $p$-capacity.

\step Suppose now $(M,g)$ is $\CC^{0,\alpha}$-Asymptotically Conical for $\alpha>0$. By \cref{thm:regularityp-potential} $u_s \in \CC_{\loc}^{1,\beta}((1/s,+\infty) \times L)$ for some $\beta \in (0, \alpha)$ and for every $K \subset (1/s,+\infty) \times L$ there exists  constant $\Cn>0$ such that
\begin{align}\label{eq:first_order_regularity_asymptotic_proof}
\norm{u_s}_{\CC^{1,\beta}(K)}\leq \Cn\norm{u_s}_{\CC^0((1/s,+\infty) \times L)}.
\end{align}
Since the metric $g_{(s)}$ locally $\CC^{0,\alpha}$ -converges to $\hat{g}$ on $(0,+\infty) \times L$ by \cref{lem:convergence}, the constant in \cref{eq:first_order_regularity_asymptotic_proof} can be chosen not depending on $s$. Hence, by Arzelà-Ascoli Theorem, $(u_s)_{s \geq 1}$ $\CC^{1}$-locally converges on $(0,+\infty) \times L$ to the function
\begin{equation}\label{eq:limit_function}
    \left( \frac{ \Cn_p( \Omega)}{\AVR(g)}\right)^{\frac{1}{p-1}} \rho ^{-\frac{n-p}{p-1}}
\end{equation}
as $s \to +\infty$. This proves \cref{thm:asymptoticbehaviourofpotential} for $k=0$ and $\ell\leq 1$.

If $(M,g)$ is $\CC^{k,\alpha}$-Asymptotically Conical for $k\geq 1$ and $\alpha >0$, we already proved that \eqref{eq:asymptotic_p_capacitary_higher_order} holds for $\ell \leq 1$. In particular, for every $ R$ there exists $S>0$ such that $\abs{\D u_s}> 0$ holds on every compact $K\subset (R,+\infty) \times L$ for every $s\geq S$. Applying \cref{thm:moreregularityp-potential}, $u_s \in \CC^{\infty}((R,+\infty) \times L)$ for every $s \geq S$. Moreover, for every $K \subset (R,+\infty) \times L)$ there exists a constant $\Cn>0$ such that
\begin{align}\label{eq:higher_order_regularity_asymptotic_proof}
\norm{u_s}_{\CC^{k+1,\alpha}(K)}\leq \Cn \norm{u_s}_{\CC^0((1/s,+\infty) \times L)}.
\end{align}
Since $g^{}_{(s)}$ locally $\CC^{k+1,\alpha}$-converges to $\hat{g}$ on $(0,+\infty) \times L$ , the constant in \cref{eq:higher_order_regularity_asymptotic_proof} can be chosen not depending on $s$. Since $R$ is arbitrary, $( u_s)_{s\geq 1}$ is precompact with respect to the local $\CC^{k+1}$-topology. Hence, $(u_s)_{s\geq 1}$ converges on compact subsets of $(0,+\infty) \times L$ up to its $(k+1)$-th derivative to the function defined in \cref{eq:limit_function} as $s \to +\infty$, concluding the proof of \cref{thm:asymptoticbehaviourofpotential}. \endproof

As a consequence of \cref{thm:asymptoticbehaviourofppotentialC0}, we extend \cref{lem:AVRconico} showing that the volume of level sets of a suitable function of the $p$-capacitary potential behaves like the volume of geodesic balls approaching infinity. Requiring the assumption of \cref{thm:asymptoticbehaviourofpotential} we actually deduce that the same occur for the $(n-1)$-Hausdorff measure of the level sets.

\begin{proposition}[Asymptotic behaviour of the area of level sets]\label{prop:arelevel}
In the same assumptions and notations of \cref{thm:asymptoticbehaviourofppotentialC0}, set, for $i = 1, \dots, N$, 
\begin{equation}
v_i = \left(\frac{\Cn^{(i)}_p(\Omega)}{\AVR(g; E_i)}\right)^{\frac{1}{n-p}} u^{-\frac{p-1}{n-p}}.
\end{equation}
Then, we have
\begin{equation}\label{eq:volume_level_sets}
\AVR(g; E_i)= \lim_{s\to +\infty} \frac{ \abs{\set{ v_i \leq s}\cap E_i}}{s^n \abs{\B^n}}.
\end{equation}
Moreover, if in addition $(M, g)$ is $\mathscr{C}^{0, \alpha}$-Asymptotically Conical for $\alpha \in (0, 1)$, then
\begin{equation}\label{eq:area_level_sets}
   \AVR(g; E_i)= \lim_{s \to + \infty} \frac{\abs{\{v_i = s\} \cap E_i}}{s^{n-1} \abs{\mathbb{S}^{n-1}}}.
\end{equation}
\end{proposition}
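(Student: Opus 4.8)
The plan is to exploit the fact that, by \cref{thm:asymptoticbehaviourofppotentialC0}, the function $v_i$ is asymptotic to the radial coordinate $\rho$ on the end $E_i$, and to reduce both limits to the corresponding statements for $\rho$ already contained in \cref{lem:AVRconico}. As in the proof of the main theorems, it suffices to argue on a single end, so I fix $i$ and set $\gamma=\left(\Cn^{(i)}_p(\Omega)/\AVR(g;E_i)\right)^{1/(p-1)}$, so that $v_i=(u/\gamma)^{-(p-1)/(n-p)}$. In these terms \cref{thm:asymptoticbehaviourofppotentialC0} reads $u=\gamma\,\rho^{-(n-p)/(p-1)}(1+o(1))$, and hence $v_i=\rho\,(1+o(1))$ as $\rho\to+\infty$ on $E_i$.

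For the volume identity \eqref{eq:volume_level_sets}, I would first fix $\varepsilon>0$ and choose $R_\varepsilon$ so large that $(1-\varepsilon)\rho\le v_i\le(1+\varepsilon)\rho$ on $\set{\rho\ge R_\varepsilon}\cap E_i$. This sandwiches the sublevel sets, modulo the fixed compact piece $\set{\rho<R_\varepsilon}$, between two conical regions:
\[
\set{R_\varepsilon\le\rho\le \tfrac{s}{1+\varepsilon}}\cap E_i\ \subseteq\ \set{v_i\le s}\cap E_i\cap\set{\rho\ge R_\varepsilon}\ \subseteq\ \set{R_\varepsilon\le\rho\le \tfrac{s}{1-\varepsilon}}\cap E_i .
\]
Taking volumes, dividing by $s^n\abs{\B^n}$ and letting $s\to+\infty$, the compact piece becomes negligible, and \cref{lem:AVRconico} applied on the end $E_i$ (with link $L_i$) shows that both outer bounds converge to $\AVR(g;E_i)/(1\pm\varepsilon)^n$. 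Letting $\varepsilon\to0^+$ then yields \eqref{eq:volume_level_sets}.

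For the area identity \eqref{eq:area_level_sets}, which is where the $\CC^{0,\alpha}$-hypothesis is used, I would pass to the blow-down. Setting $\tilde v_s=s^{-1}v_i\circ\omega_s$ with $\omega_s$ as in \eqref{eq:dilatation}, a direct computation using $u_s=s^{(n-p)/(p-1)}\,u\circ\omega_s$ gives $\tilde v_s=(u_s/\gamma)^{-(p-1)/(n-p)}$. By \cref{thm:asymptoticbehaviourofpotential} with $k=0$ and $\ell\le 1$ (available precisely because $\alpha>0$), one has $u_s\to\gamma\,\rho^{-(n-p)/(p-1)}$ in $\CC^1_{\loc}((0,+\infty)\times L_i)$, and since $u_s$ stays bounded away from $0$ on compacta this upgrades to $\tilde v_s\to\rho$ in $\CC^1_{\loc}$. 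Because $\omega_s^*g=s^2 g_{(s)}$ and $\omega_s$ carries $\set{\tilde v_s=1}$ onto $\set{v_i=s}$, the area rescales as $\abs{\set{v_i=s}\cap E_i}_g=s^{n-1}\abs{\set{\tilde v_s=1}}_{g_{(s)}}$; dividing by $\abs{\S^{n-1}}$, it remains to show $\abs{\set{\tilde v_s=1}}_{g_{(s)}}\to\abs{\set{\rho=1}}_{\hat g}=\abs{L_i}$, which delivers \eqref{eq:area_level_sets}.

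The main obstacle is exactly this last convergence of the $(n-1)$-dimensional areas, and it is the $\CC^1$-convergence that makes it work. Since $\abs{\D\rho}_{\hat g}=1$, for $s$ large one has $\abs{\D\tilde v_s}_{g_{(s)}}\ge 1/2$ near $\set{\rho=1}$, so by the implicit function theorem $\set{\tilde v_s=1}$ is a $\CC^1$ graph over $\set{\rho=1}=\set{1}\times L_i$ converging to it; combined with the local $\CC^{0,\alpha}$-convergence $g_{(s)}\to\hat g$ from \cref{lem:convergence}, the induced area elements converge uniformly and one may pass to the limit. One should also verify, via the Li--Yau sandwiching $\Cn^{-1}\rho\le v_i\le\Cn\rho$ of \cref{thm:Li-Yau-estimates}, that the level sets $\set{\tilde v_s=1}$ remain in a fixed compact annulus, so that no area escapes either to infinity or toward $\rho=0$.
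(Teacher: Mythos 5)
Your treatment of the volume identity \eqref{eq:volume_level_sets} is exactly the paper's: sandwich $\set{v_i\le s}$ between the conical sublevel sets $\set{\rho\le s/(1\pm\varepsilon)}$ using \cref{thm:asymptoticbehaviourofppotentialC0}, then invoke \cref{lem:AVRconico}. For the area identity \eqref{eq:area_level_sets} you take a genuinely different, and heavier, route. The paper instead observes that by \cref{cap-u-prop} (conservation of the flux $\int_{\set{u=1/t}}\abs{\D u}^{p-1}\dd\sigma$ along level sets) one has the \emph{exact} identity
\begin{equation}
\AVR(g;E_i)=\frac{1}{\abs{\mathbb{S}^{n-1}}\,s^{n-1}}\int\limits_{\set{v_i=s}\cap E_i}\abs{\D v_i}^{p-1}\dd\sigma
\end{equation}
for every regular value $s$, and then uses only the pointwise consequence $\abs{\D v_i}\to 1$ of the first-order expansion in \cref{thm:asymptoticbehaviourofpotential} to replace $\abs{\D v_i}^{p-1}$ by $1$ in the limit --- a two-line computation requiring no convergence of hypersurfaces. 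Your blow-down argument ($\CC^1_{\loc}$ convergence of $u_s$, implicit function theorem, level sets as $\CC^1$ graphs over $L_i$, convergence of the induced area elements under $g_{(s)}\to\hat g$) is correct and rests on the same essential input (the $\CC^{0,\alpha}$ hypothesis, through \cref{thm:asymptoticbehaviourofpotential}); it is more geometric and self-contained, at the cost of extra bookkeeping. If you keep it, make explicit that the uniform lower bound $\partial_\rho\tilde v_s\ge 1/2$ on a fixed annulus containing $\set{\tilde v_s=1}$ (the containment coming from the locally uniform convergence $\tilde v_s\to\rho$, not merely the Li--Yau sandwich) forces the level set to be \emph{globally} a graph over $L_i$, with exactly one intersection per radial ray; this is what lets you compute its area as a single graph integral and rules out stray components.
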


\proof[Proof of \cref{prop:arelevel}]
We prove the statement in the case $M$ has only one end, being the general case a direct consequence. We drop the subscript $i$ in the following lines. By \cref{thm:asymptoticbehaviourofppotentialC0}, for any $\varepsilon>0$ there exists $R_\varepsilon>0$ such that
\begin{equation}
    (1-\varepsilon)\rho \leq v \leq  (1+\varepsilon)\rho
\end{equation}
holds on $\set{\rho \geq R_\varepsilon}$. Thus we have that
\begin{equation}
    \set{\rho \leq \frac{s}{1+\varepsilon}} \subset \set{v\leq s}\subset \set{\rho \leq \frac{s}{1-\varepsilon}} .
\end{equation}
By the monotonicty of the volume we have that
\begin{equation}
    \abs{ \set{\rho \leq \frac{s}{1+\varepsilon}}} \leq \abs{\set{v\leq s}}\leq \abs{ \set{\rho \leq \frac{s}{1-\varepsilon}}}. 
\end{equation}
dividing each side by $s^{n}\abs{\mathbb{B}^n}$ and passing to the limit as $s\to +\infty$ we can conclude that
\begin{equation}
   \frac{\AVR(g)}{(1+\varepsilon)^n} \leq \lim_{s \to +\infty}\frac{\abs{\set{v\leq s}}}{s^n\abs{\B^n}} \leq \frac{\AVR(g)}{(1-\varepsilon)^n}
\end{equation}
by arbitrariness of $\varepsilon>0$ we have that the volume of level sets behaves like the one of geodesic balls approaching infinity, proving the first identity in \cref{eq:volume_level_sets}. A straightforward computation relying on the identity
\begin{equation}
    \abs{\D v} = \left(\frac{\Cn_p( \Omega)}{\AVR(g)}\right)^{\frac{1}{n-p}} \left( \frac{p-1}{n-p}\right) u^{-\frac{n-1}{p-1}}\abs{\D u}
\end{equation}
permits to write
\begin{equation}
    \AVR(g) = \frac{1}{\abs{\S^{n-1}} s^{n-1}} \int\limits_{\set{v=s}} \abs{\D v}^{p-1} \dd \sigma.
\end{equation}
If in addition $(M, g)$ is $\mathscr{C}^{0, \alpha}$-Asymptotically Conical for $\alpha \in (0, 1)$, then $\abs{\D v}$ approaches $1$ at infinity, hence we have 
\begin{equation}
    \AVR(g)= \lim_{s \to +\infty} \frac{1}{\abs{\S^{n-1}} s^{n-1}} \int\limits_{\set{v=s}} \abs{\D v}^{p-1} \dd \sigma = \lim_{s\to +\infty} \frac{ \abs{ \set{ v=s}}}{s^{n-1} \abs{\S^{n-1}}}
\end{equation}
which concludes the proof of \cref{eq:area_level_sets}.
\endproof

\section{Asymptotic behaviour of the weak IMCF}
\label{sec:asy-imcf}
%% weak IMCF introduzione
We give the precise definition of (proper) weak Inverse Mean Curvature Flow (IMCF for short).

Given an open set $U \subseteq M$,
we say that $w \in \mathrm{Lip}_{\mathrm{loc}} (U)$ is a weak IMCF if
for every $v \in \mathrm{Lip}_{\mathrm{loc}}(U)$ with $\{w \neq v\} \Subset U$ and any compact set $K\subset U$ containing $\{w \neq v\}$, we have
\begin{equation}
\label{wimcf1}
J_w^K(w) \leq J_w^K(v)
\end{equation}
 where 
\begin{equation}
\label{J}
J_w^K(v) = \int_K \abs{\D v} + v \abs{\D w} \dd\mu. 
\end{equation}
Moreover, given a bounded open set $\Omega \subset M$ with smooth boundary we say that $w$ is a weak IMCF starting at $\Omega$ if
for every $v \in \mathrm{Lip}_{\mathrm{loc}}(M)$ with $\{w \neq v\} \Subset M \setminus {\Omega}$ and any compact set $K\subset M \setminus \Omega$ containing $\{w \neq v\}$ we have
\begin{equation}
\label{wimcf1}
J_w^K(w) \leq J_w^K(v),
\end{equation}
and the set $\Omega$ is the $0$-sublevel set of $w$, that is
\begin{equation}
\label{0-sub}
\Omega = \{w < 0\}.
\end{equation}
We say that $w$ is a proper weak IMCF if the function $w$ is proper, that is, its sublevel sets are precompact.
% It is a proper function $w \in \mathrm{Lip}_{\mathrm{loc}} (M)$ that 
% satisfies the following conditions.
% \begin{enumerate}
% \item[$(i)$] 
% \item[$(ii)$] The set $\Omega$ is the $0$-sublevel set of $w$, that is
% \begin{equation}
% \label{0-sub}
% \Omega = \{w < 0\}.
% \end{equation}
% \end{enumerate}
In the  following result we gather the existence and the fundamental estimates for $w$ in the Asymptotically Conical setting.
\begin{theorem}\label{thm:weak_imcf_AC_to_Riccinonneg}
\label{thm:existenceIMCF-estimates}
Let $(M,g)$ be a $\CC^0$-Asymptotically Conical Riemannian manifold satisfying the Ricci curvature bound \eqref{eq:Ricci-intro}. Let $\Omega \subset M$ be an open bounded subset with smooth boundary. Then, there exists a proper weak IMCF $w$ starting at $\Omega$. Moreover, given $o \in \Omega$, the function $w$ satisfies
\begin{align}\label{eq:li-yau-imcf-C0AC}
    (n-1) \log \dist(x,o) -\Cn\leq w \leq (n-1) \log \dist(x,o)
     +\Cn
\end{align}
where $\Cn= \Cn(M,n, \Omega)$.
\end{theorem}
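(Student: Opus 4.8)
The plan is to treat the two assertions separately: existence of the proper weak IMCF follows from the general existence theory, while the two-sided bound \eqref{eq:li-yau-imcf-C0AC} is obtained by transporting the Li--Yau estimates for the $p$-capacitary potential through the limit $p\to 1^+$.

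For existence I would verify the hypotheses of \cite[Theorem 1.7]{Mari2019}, as recalled around \eqref{eq:L1Sobolev}. The curvature bound \eqref{eq:Ricci-intro} provides the lower bound $\Ric\geq -(n-1)\kappa^2/(1+\dist(x,o))^2$, whose right-hand side is a nondecreasing function of $\dist(x,o)$ that is moreover $o(1)$; together with the global $L^1$-Sobolev Inequality furnished by \cref{prop:Sobolev_in_AC}, this is exactly what is required. Hence a proper weak IMCF $w$ starting at $\Omega$ exists, with precompact sublevel sets by properness.

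For the estimate I would exploit the identification $w=\lim_{p\to 1^+}-(p-1)\log u_p$, locally uniform, recalled in the Introduction after \cite{Kotschwar2009,Mari2019}. Fixing $p\in(1,n)$ and applying $-(p-1)\log(\cdot)$ to the Li--Yau estimate of \cref{thm:Li-Yau-estimates}, with the constants written in the form appearing in \cref{thm:li-yau} (namely $\Cn_U^{1/(p-1)}$ and $\Cn_L^{1/(p-1)}$), one obtains on $M\setminus B(o,R)$
\begin{equation}
(n-p)\log\dist(x,o)-\log\Cn_U\ \leq\ -(p-1)\log u_p(x)\ \leq\ (n-p)\log\dist(x,o)-\log\Cn_L .
\end{equation}
The decisive feature is that the exponent $1/(p-1)$ is cancelled by the prefactor $(p-1)$, so that the additive corrections are $\log\Cn_U$ and $-\log\Cn_L$, of order $O(1)$ rather than $O(1/(p-1))$. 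I would then let $p\to 1^+$: the factor $(n-p)$ tends to $n-1$, the central term converges to $w(x)$, and by \cref{thm:li-yau} the constant $\Cn_U$ stays bounded while $\Cn_L=\Cn\,a\,\capa_p(\overline K;E)$ stays bounded away from $0$ on $M\setminus B(o,R)$, since $\capa_p(\overline K;E)$ converges to a finite and positive limit (in the spirit of \eqref{eq:convergence_p-cap_potential_SOMH}) and $a$ can be chosen controlled. This yields $(n-1)\log\dist(x,o)-\Cn\leq w\leq (n-1)\log\dist(x,o)+\Cn$ outside a compact set; the inner region is absorbed into $\Cn$ by continuity, and by \cref{lem:asymptoticaldistance} the choice between $\dist(x,o)$ and $\rho(x)$ is immaterial at this order.

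The main obstacle is precisely the uniform control of the lower Li--Yau constant $\Cn_L$ as $p\to 1^+$: a priori it may degenerate, as flagged after \cref{thm:li-yau}, which would destroy the upper bound for $w$. The resolution is to use the refined tracking of \cref{thm:li-yau} on $M\setminus B(o,R)$, where $\Cn_L$ is pinned to $\capa_p(\overline K;E)$ and hence to a positive limit, rather than the global constant near the pole. An alternative, purely geometric route would compare the outward-minimising sublevel sets $\{w\leq t\}$ with the cross-sections $\{\rho=R\}$, using the Exponential Growth Lemma \cite[Lemma 2.6]{Huisken2001} together with the area asymptotics of \cref{lem:AVRconico}; this delivers the bound immediately at the outermost points of a sublevel set, but upgrading it to a genuinely pointwise estimate requires shape control of the level sets, which is the harder input deferred to the sharper \cref{thm:asymptotic_behaviour_of_IMCF}.
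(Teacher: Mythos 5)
Your proposal is correct and follows essentially the same route as the paper: existence via \cite[Theorem 1.7]{Mari2019} backed by \cref{prop:Sobolev_in_AC}, and the two-sided bound by applying $-(p-1)\log(\cdot)$ to Li--Yau estimates whose constants have the form $\Cn^{1/(p-1)}$ with $\Cn$ controlled as $p\to 1^+$, thanks precisely to the choice of $a$ independent of $p$ (possible by the local uniform convergence of $-(p-1)\log G_p$) and to the boundedness and positivity of $\capa_p(\overline{K};E)$ in the spirit of \eqref{eq:convergence_p-cap_potential_SOMH}. The only cosmetic difference is that you route the estimate through $u_p$ rather than through $G_p$ as the paper does, which additionally requires the comparison constants between $u_p$ and $G_p$ to be put in the same $\Cn^{1/(p-1)}$ form (obtainable by the identical trick on the compact $\partial\Omega$), since \cref{thm:Li-Yau-estimates} as stated deliberately does not track its constant as $p\to 1^+$.
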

\proof The existence is guaranteed by \cite[Theorem 1.7]{Mari2019} whose assumptions are satisfied in virtue of \cref{eq:L1Sobolev} and \cref{eq:Ricci-intro}. The lower bound is the consequence of \cite[Theorem 1.7 and 1.3]{Mari2019}. Let $R$ be such that
\begin{equation}\label{eq:lower_bound_IMCF}
    -(p-1) \log G_p(x,o) \leq (n-p) \log \dist(x,o)- \log \Cn_L
\end{equation}
on $M\smallsetminus B(o,R)$ with the constant $\Cn_L= \Cn\, a \,\capa_p( \overline{K};E)$ described in the statement of \cref{thm:li-yau}. By \cite{Mari2019}, $-(p-1) \log G_p(x,o)$ locally uniformly converges as $p\to 1^+$. Then we can choose $a$ in the constant $\Cn_L$ independent from $p$ so that $G_p(o,x)\geq a^{\frac{1}{p-1}}$ holds on $M \smallsetminus K$ where $K$ is as in \cref{def:C0ACintro}. Moreover, by \cref{eq:convergence_p-cap_potential_SOMH}, since $\partial K$ is smooth, $\capa_p(\overline{K};E)$ is bounded as $p\to 1^+$. Hence the constant $\Cn_L$ in \cref{eq:lower_bound_IMCF} does not depend on $p$. Passing to the limit as $p \to 1^+$, in virtue of the upper bound in \cite[Theorem 1.7]{Mari2019}, we obtain
\begin{equation}
    w \leq (n-1) \log \dist(x,o) + \Cn
\end{equation}
outside some $B(o,R)$. Since both the left and side and the right hand side are continuous, the bound can be extended to $M \smallsetminus \Omega$. \endproof

In \cite[Remark 4.9]{Mari2019} the authors also obtained a gradient bound that reads as
\begin{align}
    \abs{ \D w} \leq \frac{\Cn}{\dist(x,o)^{1/\kappa'}}, && \text{where }\kappa'= \frac{1 + \sqrt{1+4\kappa^2}}{2}\geq 1,
\end{align}
for some constant $\Cn>0$ depending only on $\Omega$, the dimension $n$ and the geometry of the ambient manifold. By \cite{Greene1979} (see also \cite[Remark 4.5]{Mari2019}) The exponent $\kappa'$ can be chosen equal to $1$ if the lower bound on the Ricci curvature is of the kind $\Ric \geq -(n-1)f(\dist(x,o))$ for some smooth nonnegative function $f(t)$, such that
\begin{equation}\label{eq:integral_Ricci_condition}
    \int\limits_0^{+\infty} t f(t) \dd t<+\infty.
\end{equation}
The Weak Existence Theorem 3.1 \cite{Huisken2001} the function $w$ satisfies
\begin{equation}\label{eq:HI_Cheng-Yau}
    \abs{ \D w}(x) \leq \sup_{\partial \Omega \cap B(x,r)} \HH + \frac{\Cn}{r}
\end{equation}
for almost every $x \in M \smallsetminus \Omega$ and for every $r$ for which there exists a function $\psi \in \CC^2(B(x,r))$ such that $\psi \geq \dist(x,\, \cdot \,)^2$, $\psi(x)=0$, $\abs{ \D \psi} \leq 3 \dist(x,\, \cdot \,)$, $\D^2 \psi \leq 3 g$ and $\Ric\geq -\Cn/r^2$ in $B(x,r)$. The existence of $\psi$ is guaranteed if a sectional curvature lower bound is ensured. Otherwise, one can require a higher rate of convergence of the metric.
\begin{proposition}\label{prop:Cheng-Yau-Improved}
Let $(M,g)$ be a $\CC^1$-Asymptotically Conical Riemannian manifold satisfying the Ricci curvature bound \eqref{eq:Ricci-intro}. Let $\Omega \subset M$ be an open bounded subset with smooth boundary, $o\in M$. There exists a postive constant $\Cn= \Cn(n,M,\Omega)>0$, such that the solution $w \in \Lip(M)$ of the weak IMCF starting at $\Omega$, given by \cref{thm:weak_imcf_AC_to_Riccinonneg} satisfies
\begin{equation}\label{eq:Cheng-Yau-AC-Ricciqv}
    \abs{ \D w}(x) \leq \frac{ \Cn}{\dist(x,o)}
\end{equation}
for almost every $x \in M \smallsetminus \Omega$.
\end{proposition}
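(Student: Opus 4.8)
The plan is to apply the Huisken--Ilmanen gradient estimate \eqref{eq:HI_Cheng-Yau} on balls $B(x,r)$ whose radius $r$ is a fixed small fraction of $\dist(x,o)$; with this choice the $\partial\Omega$--term disappears and the surviving summand $\Cn/r$ is exactly of the claimed order. Since $w$ is Lipschitz, $\abs{\D w}$ is globally bounded, say by $L$, so on any region $\set{\dist(\cdot,o)\le R_0}$ the inequality \eqref{eq:Cheng-Yau-AC-Ricciqv} holds after enlarging $\Cn$ (there $\Cn/\dist(x,o)\ge\Cn/R_0\ge L$). It therefore remains to treat points $x$ with $d:=\dist(x,o)$ large.

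Fix $\epsilon_0\in(0,1/2)$, to be chosen small, and set $r=\epsilon_0 d$. For $d$ large the ball $B(x,r)$ is contained in $M\smallsetminus B(o,(1-\epsilon_0)d)$ and hence misses the bounded hypersurface $\partial\Omega$, so the mean curvature term $\sup_{\partial\Omega\cap B(x,r)}\HH$ in \eqref{eq:HI_Cheng-Yau} drops out. Moreover every $y\in B(x,r)$ satisfies $\dist(y,o)\ge(1-\epsilon_0)d$, so by \eqref{eq:Ricci-intro} one has $\Ric(y)\ge -(n-1)\kappa^2\epsilon_0^2\,[(1-\epsilon_0)^2 r^2]^{-1}$, and choosing $\epsilon_0$ small makes this larger than the threshold $-\Cn/r^2$ demanded by \eqref{eq:HI_Cheng-Yau}. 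The only nontrivial hypothesis left to verify is the existence of the comparison function $\psi$ on $B(x,r)$, and this is precisely where the $\CC^1$--assumption, rather than merely $\CC^0$, will enter.

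To build $\psi$ I would pass to the blow-down. The four requirements $\psi\ge\dist(x,\cdot)^2$, $\psi(x)=0$, $\abs{\D\psi}\le 3\dist(x,\cdot)$ and $\D^2\psi\le 3g$ are preserved under the rescaling $g\mapsto g_{(s)}=s^{-2}\omega_s^*g$ coupled with $\psi\mapsto s^{-2}\,\psi\circ\omega_s$, because a constant conformal factor leaves the Christoffel symbols, and hence the Hessians, unchanged up to matching prefactors. Thus by \cref{lem:convergence} it suffices to produce, for $s=d$ large, such a function on the ball of \emph{fixed} radius $\epsilon_0$ about the rescaled point $x_s=\omega_s^{-1}(x)$, which by \cref{lem:asymptoticaldistance} sits at cone--radius $\rho(x_s)=1+o(1)$, i.e. inside the compact region $\set{1/2\le\rho\le2}$. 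On the model cone the natural candidate is $\psi=c^2\dist_{\hat g}(x_s,\cdot)^2$ with $c>1$ fixed close to $1$. There the sectional curvatures of $\hat g$ are bounded and the injectivity radius is bounded below, so for $\epsilon_0$ small $\psi$ is smooth on $B_{\hat g}(x_s,2\epsilon_0)$ and the Hessian comparison theorem gives $\D^2_{\hat g}\psi\le 2c^2(1+\Cn\epsilon_0^2)\,\hat g$.

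The heart of the matter, and the step I expect to be the main obstacle, is transferring this Hessian bound to $g_{(s)}$: $\CC^0$--convergence is useless for controlling $\D^2\psi$, whereas the assumed $\CC^1$--convergence $g_{(s)}\to\hat g$ forces the Christoffel symbols to converge locally uniformly, whence $\D^2_{g_{(s)}}\psi\to\D^2_{\hat g}\psi$. Combined with $g_{(s)}\to\hat g$ in $\CC^0$ --- which yields $\hat g\le(1+o(1))g_{(s)}$ together with $(1-o(1))\dist_{\hat g}\le\dist_{g_{(s)}}\le(1+o(1))\dist_{\hat g}$ on the ball --- all four conditions then hold for $c$ close to $1$ and $s=d$ large, with genuine room to spare (the Hessian being $\approx 2c^2<3$ and the gradient $\approx 2c^2\dist_{\hat g}<3\dist_{g_{(s)}}$). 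Undoing the rescaling delivers the required $\psi$ at scale $r=\epsilon_0 d$, and \eqref{eq:HI_Cheng-Yau} then gives $\abs{\D w}(x)\le \Cn/r=\Cn/(\epsilon_0 d)$ for almost every such $x$, which is \eqref{eq:Cheng-Yau-AC-Ricciqv}. The need to bound $\D^2\psi$ from above is exactly why the statement is phrased under a $\CC^1$ (rather than $\CC^0$) asymptotic hypothesis.
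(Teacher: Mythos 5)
Your proposal is correct and follows essentially the same route as the paper: apply the Huisken--Ilmanen gradient estimate \eqref{eq:HI_Cheng-Yau} on a ball of radius $r$ comparable to $\dist(x,o)$, so that the $\partial\Omega$-term vanishes and the $\Cn/r$ term gives the claimed decay. The only difference is that you construct the comparison function $\psi$ explicitly (via blow-down, Hessian comparison on the model cone, and the $\CC^1$-convergence of $g_{(s)}$ to $\hat g$ to transfer the Hessian bound), whereas the paper delegates precisely this step to the discussion around \cite[Definition 3.3]{Huisken2001} and the proof of the Blowdown Lemma; your write-up correctly identifies why the $\CC^1$ hypothesis is the one that matters here.
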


\proof By \cite[Weak Existence Theorem 3.1]{Huisken2001} the function $w$ satisfies \eqref{eq:HI_Cheng-Yau} for almost every $x \in M \smallsetminus \Omega$. In virtue of the discussion in \cite[Definition 3.3]{Huisken2001} (see also the proof of \cite[Blowdown Lemma 7.1]{Huisken2001}) there exists a constant $\Cn>0$ and $R>0$ such that $r \geq \Cn \dist(x,o)$ in \eqref{eq:HI_Cheng-Yau} for every $x\in M\smallsetminus B(o,R)$. Then \eqref{eq:Cheng-Yau-AC-Ricciqv} follows taking $r$ so that $\partial \Omega \cap B(x,r)= \varnothing$. \endproof
The notion of weak IMCF is intimately tied with that of strictly outward minimising sets \cite[Minimizing Hulls, p.371]{Huisken2001}. Leaving the details to \cite{Fogagnolo2020a}, we recall that a bounded set $E \subset M$ is strictly outward minimising if $P(F) > P(E)$ for any bounded $F \supset E$ with $\abs{F \setminus E} > 0$. The following simple lemma implies the existence of a foliation of strictly outward minimising sets in Asymptotically Conical manifolds.

\begin{lemma}\label{lem:cross-sections-SOM}
Let $(M,g)$ be a $\CC^0$-Asymptotically Conical Riemannian manifold. Then $\set{\rho \leq r}$ is strictly outward minimising for $r$ large enough.
\end{lemma}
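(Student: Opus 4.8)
The plan is to settle the statement first on the exact metric cone and then transfer it to $(M,g)$ through the rescaling procedure of \cref{lem:convergence}. Since an end of $M$ is determined by $K$ and the competitors relevant to outward minimality of $\set{\rho\le r}$ (with $r$ large) are supported near infinity, it suffices to argue on a single end; so I may assume $M\smallsetminus K\cong[1,+\infty)\times L$ with $L$ connected and compare against the model cone $\hat{g}=\dd\rho^2+\rho^2 g_L$.

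On the cone $((0,+\infty)\times L,\hat{g})$ I would show that $E_r:=\set{\rho\le r}$ is strictly outward minimising for every $r$, \emph{irrespective of the geometry of $L$}. The cleanest route is a calibration by the radial field $X=\partial_\rho$, which satisfies $\abs{X}_{\hat{g}}\equiv1$, $X=\nu$ on $\set{\rho=r}$, and $\div_{\hat{g}}X=(n-1)/\rho>0$. Given a bounded $F\supseteq E_r$ of finite perimeter, $F$ has full measure in $\set{\rho<r}$, hence $\hfr F\subseteq\set{\rho\ge r}$; applying the Gauss--Green formula to $X$ on $F\smallsetminus E_r$ and using $\langle X,\partial_\rho\rangle=1$ on $\set{\rho=r}$ gives
\[
P(F)\ \ge\ \int_{\hfr F\cap\set{\rho>r}}\langle X,\nu\rangle\,\dd\sigma\ =\ P(E_r)+\int_{F\smallsetminus E_r}\frac{n-1}{\rho}\,\dd\mu .
\]
Thus $P(F)-P(E_r)\ge\int_{F\smallsetminus E_r}(n-1)\rho^{-1}\dd\mu>0$ whenever $\abs{F\smallsetminus E_r}>0$, which is strict outward minimality together with a quantitative lower bound on the perimeter gap. (Equivalently, the inward radial retraction $(\rho,\vartheta)\mapsto(\min\set{\rho,r},\vartheta)$ is $1$-Lipschitz for $\hat{g}$ and strictly contracts the link directions, so it strictly lowers the area of any boundary portion lying over $\set{\rho>r}$.)

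To pass to $(M,g)$ I would rescale and argue by contradiction. Suppose that along some $r_j\to+\infty$ the set $\set{\rho\le r_j}$ fails to be strictly outward minimising, so that there are bounded $F_j\supseteq\set{\rho\le r_j}$ with $\abs{F_j\smallsetminus\set{\rho\le r_j}}>0$ and $P_g(F_j)\le P_g(\set{\rho\le r_j})$. Pulling back by $\omega_{r_j}$ and passing to $g_{(r_j)}=r_j^{-2}\omega_{r_j}^{\ast}g$, which converges to $\hat{g}$ in $\CC^0_{\loc}$ by \cref{lem:convergence}, the rescaled competitors $\tilde F_j\supseteq\set{\rho\le1}$ satisfy $P_{g_{(r_j)}}(\tilde F_j)\le P_{g_{(r_j)}}(\set{\rho\le1})$, because perimeter is homogeneous under the dilation. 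Uniform perimeter bounds and confinement produce an $L^1_{\loc}$-subsequential limit $\tilde F\supseteq\set{\rho\le1}$, and lower semicontinuity of perimeter under uniform convergence of the metrics forces $P_{\hat{g}}(\tilde F)\le\liminf_j P_{g_{(r_j)}}(\tilde F_j)\le\lim_j P_{g_{(r_j)}}(\set{\rho\le1})=P_{\hat{g}}(\set{\rho\le1})$, contradicting the strict minimality on the cone---\emph{provided} $\abs{\tilde F\smallsetminus\set{\rho\le1}}>0$.

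The main obstacle is precisely this last proviso: under merely $\CC^0$ convergence the perimeter distortion between $g_{(r_j)}$ and $\hat{g}$ is only $(1+o(1))$, so a naive comparison loses the strict gap for competitors whose excess volume collapses in the limit (thin, wide bumps concentrating near $\set{\rho=1}$); indeed non-monotonicity of the cross-sectional areas, which $\CC^0$ does not rule out, is exactly what must be circumvented. Here the quantitative estimate $P_{\hat{g}}(F)-P_{\hat{g}}(E_r)\ge\int_{F\smallsetminus E_r}(n-1)\rho^{-1}\dd\mu$ is what saves the argument: transported to $g_{(r_j)}$ it shows that any competitor beating $\set{\rho\le1}$ must retain a definite amount of volume at bounded distance from $\set{\rho=1}$, so the defect $\abs{\tilde F_j\smallsetminus\set{\rho\le1}}$ cannot vanish and the limit keeps positive excess volume. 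Carrying out this volume bound carefully---simultaneously controlling escape of mass to infinity and concentration near the cross-section---is the only delicate point; the reduction to one end, the compactness, and the lower semicontinuity are routine.
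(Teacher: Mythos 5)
Your calibration computation on the exact cone is correct and is in essence the same key estimate the paper uses: the radial field $X=\D\rho/\abs{\D\rho}$ calibrates $\set{\rho=r}$ and its positive divergence $(n-1)/\rho$ produces the quantitative gap $P(F)-P(E_r)\ge\int_{F\smallsetminus E_r}(n-1)\rho^{-1}\dd\mu$. The problem is the transfer to $(M,g)$. Your blow-down/compactness/lower-semicontinuity scheme founders exactly where you say it does, and the fix you gesture at does not close the gap: you invoke the quantitative estimate ``transported to $g_{(r_j)}$,'' but that estimate has only been established for $\hat g$, and transporting it to $g_{(r_j)}$ (or to $g$) is precisely the nontrivial content of the lemma. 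The obstruction is that $\div_{g}X$ involves first derivatives of the metric, which $\CC^0$-convergence does not control, so you cannot simply say the calibration inequality holds for $g_{(r_j)}$ up to $(1+o(1))$. Moreover, if you \emph{could} justify an approximate calibration inequality for $g_{(r_j)}$ with the volume term intact, the conclusion would follow immediately for each large $r_j$ and the entire contradiction/compactness superstructure would be redundant.

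The paper resolves exactly this point by running the calibration directly on $(M,g)$ in \emph{weak} form: for $\varphi\in\CC^\infty_c(\set{\rho\ge r})$ one has
\begin{equation}
\int\limits_{\set{\rho\ge r}}\div\left(\frac{\D\rho}{\abs{\D\rho}}\right)\varphi\dd\mu
=-\int\limits_{\set{\rho\ge r}}\ip{\frac{\D\rho}{\abs{\D\rho}},\D\varphi}\dd\mu-\int\limits_{\set{\rho=r}}\varphi\dd\sigma,
\end{equation}
and the right-hand side depends only on the metric coefficients, not on their derivatives. Hence $\CC^0$-closeness to $\hat g$ suffices to conclude that the distributional divergence of $X$ is bounded below by $(1-\varepsilon)(n-1)/\rho$ against the cone measure for $r\ge R_\varepsilon$, and Gauss--Green on $F\smallsetminus\set{\rho<r}$ then yields $\abs{\partial^*F}_g-\abs{\set{\rho=r}}_g\ge c\,\abs{F\smallsetminus\set{\rho<r}}_g$ directly, with no blow-down needed. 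To repair your proof you should replace the compactness argument with this weak-divergence comparison; as written, the step ensuring $\abs{\tilde F\smallsetminus\set{\rho\le1}}>0$ in the limit is not proved.
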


\proof Consider any $\varphi \in \CC^\infty_c(\set{\rho \geq r})$, then
\begin{align}
    \int\limits_{\set{\rho \geq r}} \div\left( \frac{ \D \rho }{ \abs{ \D \rho}} \right)\varphi \dd {\mu} = - \int\limits_{\set{\rho \geq r}} \ip{\frac{ \D \rho }{ \abs{ \D \rho}}, \D \varphi}\dd {\mu}  - \int\limits_{\set{\rho =r }}\varphi \dd {\sigma_g}.
\end{align}
Observe that the right hand side of the previous identity depends only on the coefficient of the metric and not on their derivatives. Since the metric $g$ converges to the metric $\hat{g}$, for every $\varepsilon>0$ there exists $R_\varepsilon$ such that for every $r \geq R_\varepsilon$
\begin{equation}\label{eq:convergence_mean_curvature}
    \abs{\; \int\limits_{M} \div\left( \frac{ \D \rho }{ \abs{ \D \rho}}\right) \varphi \dd {\mu_g} - \int\limits_{M} \frac{n-1}{\rho} \varphi \dd {\mu_{\hat{g}}}} \leq \varepsilon \int\limits_{M} \frac{n-1}{\rho} \varphi \dd {\mu_{\hat{g}}}
\end{equation}
for every $\varphi \in \CC^\infty_c(\set{\rho \geq r})$ and
\begin{equation}
    \abs{ \abs{E}_g - \abs{E}_{\hat{g}}} \leq \varepsilon \abs{ E}_{\hat{g}}
\end{equation}
for every measurable $E\subset \set{\rho \geq r}$. By \eqref{eq:convergence_mean_curvature} and the density of compactly supported smooth function, for every $E\subset \set{\rho \geq r}$ we have that
\begin{equation}
     \int\limits_{E} \div\left(\frac{\D \rho}{\abs{\D \rho}}\right) \dd {\mu_g}\geq (1-\varepsilon)\frac{n-1}{\sup_{E} \rho} \abs{E}_{\hat{g}}.
\end{equation}
Let $F$ be a subset of finite perimeter containing $\set{\rho < r}$, then
 \begin{align}
     \left(\frac{1-\varepsilon}{1+ \varepsilon}\right)\left(\frac{n-1}{\sup_{F} \rho}\right)\abs{F\smallsetminus\set{\rho < r}}_{g}& \leq (1-\varepsilon) \frac{n-1}{\sup_{F} \rho}\abs{F\smallsetminus\set{\rho \leq r}}_{\hat{g}} \leq \int\limits_{M} \div \left( \frac{\D \rho}{\abs{ \D \rho }}\right) \dd {\mu_g}\\
     &\leq \int\limits_{\partial^* F} \ip{  \frac{\D \rho}{\abs{ \D \rho }}, \nu_{\partial^* F} } \dd{\sigma_g} - \int\limits_{\set{\rho=r}} \ip{ \frac{\D \rho}{\abs{ \D \rho }}, \nu_{\set{\rho = r}}} \dd{\sigma_g}\\
     &\leq \abs{ \partial^* F}_g - \abs{ \set{\rho =r}}_g.
 \end{align}
 This proves that $\abs{ \set{\rho =r}}_g\leq  \abs{ \partial^* F}_g $ and the equality holds true if and only if $\abs{F\smallsetminus\set{\rho < r}}_{g} = 0$, which gives that $\set{\rho \leq r}$ is strictly outward minimising. \endproof 

By \cite[Theorem 2.16]{Fogagnolo2020a} the existence of an exhaustion of strictly outward minimising sets provides, for any bounded $\Omega \subset M$ with smooth boundary, a suitable bounded \emph{strictly outward minimising hull} $\Omega^*$, that in particular fulfils
\begin{equation}
\abs{\partial \Omega^*} = \inf\set{\abs{\partial^*F} \st F \text{ is bounded and } \Omega \subseteq F}.
\end{equation}
Let $(M,g)$ be a $\CC^0$-Asymptotically Conical Riemannian manifold defined in \cref{def:C0ACintro} and denote by $E_1,\ldots, E_N$ the (finitely many) ends. Consider $\Omega\subset M$ open bounded subset with smooth boundary and $w:M \smallsetminus \Omega \to [0,+\infty)$ the weak IMCF $w$ starting at $\Omega$. As we did for the $p$-capacity we can define the area of the strictly outward minimising hull of $\Omega$ with respect to one end $E_i$. Indeed, there exists a time $T$ such that $\set{w \leq t}$ contains the compact $K$ defined in \cref{def:C0ACintro} for every $t \geq T$. We then define the area of $\partial \Omega^* $ with respect to $E_i$ as 
\begin{equation}
    \abs{\partial \Omega^*}^{(i)}= \frac{\abs{\partial\set{w\leq t}\cap E_i}}{\ee^t}
\end{equation}
for some $t \geq T$. Observe that such definition is well posed by \cite[Exponential Growth Lemma 1.6]{Huisken2001}. Moreover, it can be checked, substantially using \cite[Proposition 3.4]{Fogagnolo2020a}, that  $\abs{ \partial \Omega^*}$ splits as
\begin{equation}
    \abs{\partial \Omega^*} = \sum_{i=1}^N \abs{\partial \Omega^*}^{(i)}.
\end{equation}
Theorem 1.2 in \cite{Fogagnolo2020a} establishes a relation between $\Cn_p(\Omega)$ and $\abs{\partial \Omega^*}$. The same relation between $\Cn^{(i)}_p(\Omega)$ and $\abs{\partial \Omega^*}^{(i)}$ holds true.
\begin{lemma}\label{lem:convergenceipcap}
    Let $(M,g)$ be a $\CC^0$-Asymptotically Conical Riemannian manifold with Ricci curvature satisfying \eqref{eq:Ricci-intro} and $N$ be the number of ends. Let $\Omega \subseteq M$ be an open bounded subset with smooth boundary. Then,
    \begin{equation}
        \lim_{p\to 1^+} \Cn^{(i)}_p(\Omega)= \frac{ \abs{ \partial \Omega^*}^{(i)}}{ \abs{\S^{n-1}}}
    \end{equation}
holds for every $i=1, \ldots, N$
\end{lemma}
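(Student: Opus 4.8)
\proof[Plan of proof]
The plan is to localise to a single end the global identity \eqref{eq:convergence_p-cap_potential_SOMH} (that is, \cite[Theorem 1.2]{Fogagnolo2020a}) by exploiting the two splittings $\Cn_p(\Omega)=\sum_{i=1}^N\Cn^{(i)}_p(\Omega)$ and $\abs{\partial\Omega^*}=\sum_{i=1}^N\abs{\partial\Omega^*}^{(i)}$. Write $a^{(i)}_p:=\Cn^{(i)}_p(\Omega)$ and $b^{(i)}:=\abs{\partial\Omega^*}^{(i)}/\abs{\S^{n-1}}$, so that the global statement reads $\sum_i a^{(i)}_p\to\sum_i b^{(i)}$ as $p\to1^+$. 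Since by \cref{lem:finiteends} there are only finitely many ends, it is enough to establish the one-sided bound $\liminf_{p\to1^+}a^{(i)}_p\geq b^{(i)}$ for each $i$. Indeed, writing $a^{(i)}_p=\sum_j a^{(j)}_p-\sum_{j\neq i}a^{(j)}_p$ and using $\liminf_{p\to1^+}\sum_{j\neq i}a^{(j)}_p\geq\sum_{j\neq i}b^{(j)}$, one gets
\[
\limsup_{p\to1^+}a^{(i)}_p\leq \lim_{p\to1^+}\sum_j a^{(j)}_p-\liminf_{p\to1^+}\sum_{j\neq i}a^{(j)}_p\leq \sum_j b^{(j)}-\sum_{j\neq i}b^{(j)}=b^{(i)},
\]
whence $\limsup a^{(i)}_p\leq b^{(i)}\leq\liminf a^{(i)}_p$ and the desired limit follows.

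To prove the per-end inequality I would pass to the weak IMCF. By Moser's theory and its Riemannian extensions \cite{Moser2007,Kotschwar2009,Mari2019}, the functions $w_p:=-(p-1)\log u_p$ converge locally uniformly to the weak IMCF $w$ of \cref{thm:weak_imcf_AC_to_Riccinonneg}. Since $\abs{\D w_p}=(p-1)\abs{\D u_p}/u_p$, on the level set $\set{u_p=1/t}=\set{w_p=\tau}$ with $\tau:=(p-1)\log t$ (so that $e^\tau=t^{p-1}$) the flux representation \eqref{eq:ends_capacity_decomposition} rewrites as
\[
\Cn^{(i)}_p(\Omega)=\frac{1}{(n-p)^{p-1}\,\abs{\S^{n-1}}\,e^{\tau}}\int\limits_{\set{w_p=\tau}\cap E_i}\abs{\D w_p}^{p-1}\dd\sigma ,
\]
which is independent of the chosen level. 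Fixing $\tau$ large enough that $\set{w\leq\tau}\supset K$, I would then let $p\to1^+$: here $(n-p)^{p-1}\to1$, the sublevel sets $\set{w_p\leq\tau}$ converge in $L^1_{\loc}$ to $\set{w\leq\tau}$, and $\abs{\D w_p}^{p-1}\to1$. Lower semicontinuity of the perimeter relative to the open end $E_i$ should then give
\[
\liminf_{p\to1^+}\Cn^{(i)}_p(\Omega)\geq \frac{\abs{\partial\set{w\leq\tau}\cap E_i}}{\abs{\S^{n-1}}\,e^{\tau}}=\frac{\abs{\partial\Omega^*}^{(i)}}{\abs{\S^{n-1}}}=b^{(i)},
\]
the middle identity being the definition of $\abs{\partial\Omega^*}^{(i)}$ and its independence of $\tau$.

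The main obstacle is precisely this last limit, namely transferring the flux $\int_{\set{w_p=\tau}\cap E_i}\abs{\D w_p}^{p-1}\dd\sigma$ to the relative perimeter $\abs{\partial\set{w\leq\tau}\cap E_i}$. The pointwise convergence $\abs{\D w_p}^{p-1}\to1$ has to be coupled with the convergence of the moving level sets; using the elementary inequality $x^{p-1}\geq 1+(p-1)\log x$ together with the gradient bound of \cref{prop:Cheng-Yau-Improved} one reduces the $\geq$ direction to the lower semicontinuity of the relative perimeter, but controlling the region where $\abs{\D w_p}$ is small is delicate and is exactly the content of \cite[Theorem 1.2]{Fogagnolo2020a}. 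I would therefore carry out that argument with every integral restricted to $E_i$, which is legitimate because the level sets lie in the conical region and split across the ends; the finiteness of the ends then upgrades the resulting per-end inequality, via the bootstrapping above, to the full per-end equality claimed in the statement.
\endproof
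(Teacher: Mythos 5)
Your proposal matches the paper's proof: the paper likewise establishes only the one-sided bound $\liminf_{p\to 1^+}\Cn_p^{(i)}(\Omega)\geq \abs{\partial\Omega^*}^{(i)}/\abs{\S^{n-1}}$ on each end --- by localising the perimeter-versus-$p$-capacity comparison of \cite[Theorem 1.2]{Fogagnolo2020a} to $E_i$, exactly as you propose in your final paragraph --- and then upgrades it to equality using the global limit $\Cn_p(\Omega)\to\abs{\partial\Omega^*}/\abs{\S^{n-1}}$ together with the finiteness of the ends. The only cosmetic difference is that the paper runs the per-end inequality through the chain $\abs{\partial\set{w\leq t}\cap E_i}\leq \Cn_{n,p}\,\capa_p(\set{w\leq t};E_i)$ combined with capacity monotonicity and the scaling identity of \cref{prop:scaling-invariant-cap}, rather than through lower semicontinuity of the relative perimeter.
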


\proof Let $w$ be the solution to the weak IMCF starting at $\Omega$ and $T$ large enough so that $\set{w \leq T}$ contains $K$ in \cref{def:C0ACintro}. Given $u_p$ the solution to \eqref{pb-intro}, by \cite{Mari2019} we have $-(p-1)\log u_p$ where converges locally uniformly to $w$ as $p\to 1^+$. In particular, for every $t\geq T$ there exists $p_t\in (1,n)$ such that $\set{w\leq T}\subseteq \set{-(1-p)\log u_p\leq t}$ holds for every $p<p_t$. Arguing as in \cite[Theorem 1.2]{Fogagnolo2020a}, since an Isoperimetric Inequality is in force by \cref{prop:Sobolev_in_AC}, we can prove that $\abs{\partial \set{w\leq t}\cap E_i} \leq \Cn_{n,p}\,\capa_p(\partial \set{w\leq t};E_i)$, for some constant $\Cn_{n,p}$ such that $\Cn_{n,p}\to 1$ as $p\to 1^+$. In particular, by the monotonicity of the $p$-capacity and \cref{prop:scaling-invariant-cap} we have that
\begin{align}
    \abs{\partial \Omega^*}^{(i)} &\leq \Cn_{n,p}\,\ee^{-T}\capa_p\left( \set{w\leq T}\cap E_i; E_i\right) \leq \ee^{-T}\capa_p\left(\set{u_p\geq \Cn_{n,p}\,\ee^{-\frac{t}{p-1}}}\cap E_i;E_i\right)\\& =\Cn_{n,p}\, \ee^{t-2T} \capa_p\left(\set{u_p\geq \ee^{-\frac{T}{p-1}}}\cap E_i ;E_i\right) \leq \Cn_{n,p}\ee^{t-T} \capa_p^{(i)}( \Omega)
\end{align}
Sending $p\to 1^+$ and then $t\to T^+$ we have that
\begin{equation}
    \frac{\abs{\partial \Omega^*}^{(i)}}{\abs{\S^{n-1}}} \leq \lim_{p\to 1^+}\Cn_p^{(i)}( \Omega).
\end{equation}
If for some $i=1, \ldots, N$ the inequality is strict, then
\begin{equation}
    \frac{\abs{\partial \Omega^*}}{\abs{\S^{n-1}}}=\sum_{i=1}^N    \frac{\abs{\partial \Omega^*}^{(i)}}{\abs{\S^{n-1}}} <  \sum_{i=1}^N\lim_{p\to 1^+}\Cn_p^{(i)}( \Omega) =\lim_{p\to 1^+}\Cn_p( \Omega) = \frac{\abs{\partial \Omega^*}}{\abs{\S^{n-1}}}
\end{equation}
which is a contradiction. \endproof

Clearly, we also obtain the analogous of \cref{prop:p-capacity_level_sets}.

\begin{proposition}[Asymptotic behaviour of the area of level sets]\label{prop:area_level_set_IMCF}
In the same assumptions and notations of \cref{thm:asymptotic_behaviour_of_IMCF}, set, for $i=1,\ldots,m$,
\begin{equation}
    v_i= \left(\frac{\abs{\partial \Omega^*}^{(i)}}{ \abs{ \S^{n-1}}\AVR(g; E_i)}\right)^{\frac{1}{n-1}} \ee^{\frac{w}{n-1}}.
\end{equation}
Then, we have
\begin{equation}
    \lim_{s \to +\infty}\frac{\abs{\set{v_i=s}\cap E_i}}{s^{n-1} \abs{ \S^{n-1}}} = \AVR(g).
\end{equation}
\end{proposition}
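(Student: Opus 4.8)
The plan is to exploit that $v$ is, up to a multiplicative constant, a strictly increasing reparametrisation of the flow parameter $w$, so that its level sets coincide with those of $w$; the area can then be read off directly from the exponential growth of the perimeter of the sublevel sets, with no recourse to perimeter monotonicity (which in fact fails). As always it is enough to work on a single end $E$, dropping the index $i$, so that $\AVR(g)=\AVR(g;E)$. Setting
\[
c=\left(\frac{\abs{\partial \Omega^*}}{\abs{\S^{n-1}}\AVR(g)}\right)^{\frac{1}{n-1}},
\]
one has $v=c\,\ee^{w/(n-1)}$, and since $t\mapsto c\,\ee^{t/(n-1)}$ is a strictly increasing bijection, $\set{v\leq s}=\set{w\leq t(s)}$ and $\set{v=s}=\set{w=t(s)}$ with $t(s)=(n-1)\log(s/c)$, where $t(s)\geq T$ as soon as $s$ is large.

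The decisive input, available essentially for free from the definition of $\abs{\partial \Omega^*}$ — whose well-posedness is exactly the content of \cite[Exponential Growth Lemma 1.6]{Huisken2001} — is the exact identity $\abs{\partial\set{w\leq t}\cap E}=\ee^{t}\abs{\partial \Omega^*}$ for every $t\geq T$. I would substitute $t=t(s)$, use $\ee^{t(s)}=(s/c)^{n-1}$ together with $c^{n-1}=\abs{\partial \Omega^*}/(\abs{\S^{n-1}}\AVR(g))$, and watch the factor $\abs{\partial \Omega^*}$ cancel, leaving
\[
\abs{\partial\set{v\leq s}\cap E}=s^{n-1}\abs{\S^{n-1}}\AVR(g)
\]
for all large $s$; dividing by $s^{n-1}\abs{\S^{n-1}}$ the ratio is eventually constant, so the limit is immediate. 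Note that this route bypasses \cref{thm:asymptotic_behaviour_of_IMCF} itself (which only enters through the setup guaranteeing the existence of $w$ and of $\abs{\partial \Omega^*}$); the expansion of that theorem is consistent with the above, giving $v=\rho(1+o(1))$, but it cannot be used through a sandwiching argument because the perimeter is not monotone under inclusion.

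The one genuinely delicate point — and the step I expect to cost the most care — is to match the $(n-1)$-area of the level set $\set{v=s}$ with the perimeter $\abs{\partial\set{v\leq s}\cap E}$ appearing above. For the weak IMCF the function $w$ may have jump times at which $\set{w=t}$ fattens, and there the two quantities need not agree. Since such jumps occur at only countably many values, for almost every $s$ the slice $\set{v=s}\cap E$ coincides, up to an $\mathcal H^{n-1}$-negligible set, with $\hfr\set{w\leq t(s)}\cap E$, and its area is precisely $\abs{\partial\set{w\leq t(s)}\cap E}$; restricting the limit $s\to+\infty$ to such regular values is harmless, the quantity being eventually constant. A small amount of bookkeeping is then needed to ensure that for $t\geq T$ the surface $\set{w=t}$ sits in the end $E$ outside the compact set $K$, so that the per-end exponential growth genuinely applies. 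The general, multi-end statement finally follows by running the argument on each $E_i$ separately.
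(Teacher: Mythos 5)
Your proof is correct, and it isolates the right mechanism: once one accepts the per-end exponential growth identity $\abs{\partial\set{w\leq t}\cap E_i}=\ee^{t}\abs{\partial\Omega^*}^{(i)}$ for $t\geq T$ (which is exactly the well-posedness of the definition of $\abs{\partial\Omega^*}^{(i)}$), the normalising constant in the definition of $v_i$ is chosen precisely so that $\abs{\partial\set{v_i\leq s}\cap E_i}=s^{n-1}\abs{\S^{n-1}}\AVR(g;E_i)$ holds identically for large $s$, and the limit is trivial. The paper reaches the same conclusion but by a different organisation: \cref{prop:area_level_set_IMCF} is extracted as a byproduct of Step 3 of the combined proof, where the blow-down family $w_s$ is shown to converge to $(n-1)\log\rho+\gamma$ and the constant $\gamma$ is identified by sandwiching $\set{\ee^{(w-\gamma)/(n-1)}\leq s_k}$ between cross-sections $\set{\rho\leq s_k/(1\pm\varepsilon)}$ and invoking \cref{lem:AVRconico}; the middle term of that sandwich is computed by the very same exponential growth identity you use. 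What your route buys is logical economy: the proposition becomes independent of \cref{thm:asymptotic_behaviour_of_IMCF} and of the whole compactness/uniqueness machinery, exactly as you claim. What it does not replace is the characterisation of $\gamma$ itself, which is needed for the main theorem and genuinely requires the sandwich. On that point, your parenthetical that the sandwiching argument ``cannot be used because the perimeter is not monotone'' is too strong: the paper's comparison is legitimate because \cref{lem:cross-sections-SOM} makes the inner and outer barriers strictly outward minimising, for which $P(E)\leq P(F)$ whenever $E\subseteq F$ does hold. Finally, your discussion of jump times is a genuine improvement in care over the paper: at fattening times $\set{w=t}$ has interior and its $\mathcal H^{n-1}$-measure need not equal the perimeter of the sublevel set, so the statement should be read (as both you and, implicitly, the authors do) with $\abs{\set{v_i=s}\cap E_i}$ interpreted as $\abs{\partial\set{v_i\leq s}\cap E_i}$, for which the identity holds for every large $s$ and no restriction to regular values is needed.
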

Observe that a similar result for the $p$-capacitary potential was obtained in \cref{eq:area_level_sets}. In that case a first order asymptotic behaviour for the $p$-capacitary potential was required in the proof. The reason is that the area is linked to the level sets of IMCF in the same way the $p$-capacity is linked to the level set of $p$-capacitary potential. A simple $\CC^0$-convergence is therefore enough in this case.

As a byproduct we also obtain the counterpart of \cref{prop:uniqueness_on_cones} proving that
\begin{align}\label{eq:unique_solution_cone}
w(x)= (n-1) \log(\rho(x))&&\text{with }x\in(0,+\infty)\times L
\end{align}
is the unique solution on $(0,+\infty)\times L$ up to a constant.

\begin{proposition}\label{prop:uniqueness-on-cones-IMCF}
Let $((0,+\infty)\times L, \hat{g})$ be a Riemannian cone with nonnegative Ricci curvature where $L$ is a closed connected smooth hypersurface. Let $w$ be a weak IMCF on $(0,+\infty)\times L$ satisfying $w(x) \geq  (n-1) \log \rho(x) +\Cn$ for every $x \in (0,+\infty) \times L$ for some constant $\Cn\geq 0$. Then, there exists a $\gamma \in \R$ such that
\begin{align}
    w(x)= (n-1)\log(\rho(x))+\gamma && \text{with } x \in (0,+\infty)\times L,
\end{align}
holds on $(0,+\infty) \times L$.
\end{proposition}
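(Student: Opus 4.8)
The plan is to adapt the eccentricity argument of \cref{step:gammadefconvergence}, replacing the comparison principle for $p$-harmonic functions with the comparison principle for the weak IMCF of Huisken and Ilmanen \cite{Huisken2001}. First I would record that $w_0 = (n-1)\log\rho$ is itself a weak IMCF on the cone: the cross-sections $\set{\rho = r}$ are smooth, strictly outward minimising (by \cref{lem:cross-sections-SOM} applied to the exact cone, whose hypotheses hold since $\Ric_{\hat g}\geq 0$), they foliate $(0,+\infty)\times L$, and they satisfy $\HH = (n-1)/\rho = \abs{\D w_0}$ together with $\div(\D w_0/\abs{\D w_0}) = \Delta\rho = (n-1)/\rho$. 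Since only gradients appear in \eqref{J}, the function $w_0 + c$ is a weak IMCF for every constant $c$; more generally $(n-1)\log(\rho/a)$ is the weak IMCF whose zero level is the cross-section $\set{\rho=a}$. I would then define, in analogy with \eqref{eq:eccentricity}, the eccentricity $\ex_w(t) = R(t)/r(t)$, where $[r(t),R(t)]\times L$ is the smallest annulus containing $\set{w=t}$, and reduce the statement to showing $\ex_w\equiv 1$.

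The monotonicity of $\ex_w$ is where the IMCF comparison enters. The lower bound $w\geq (n-1)\log\rho + \Cn$ forces $\set{w\leq t}$ to be bounded, and one checks the inclusions $\set{\rho\leq r(t)}\subseteq\set{w\leq t}\subseteq\set{\rho\leq R(t)}$ directly from the definitions of $r(t)$ and $R(t)$ (the inner one using that $w\to-\infty$ at the tip and that $\set{\rho<r(t)}$ meets no point of $\set{w=t}$). Comparing $w$ with the two model flows through $\set{\rho=r(t)}$ and $\set{\rho=R(t)}$ and invoking the fact that nested weak IMCF solutions remain nested, I would obtain, for every $T\geq t$,
\begin{equation}
r(t)\,\ee^{\frac{T-t}{n-1}} \leq r(T)\leq R(T)\leq R(t)\,\ee^{\frac{T-t}{n-1}},
\end{equation}
whence $\ex_w(T)\leq \ex_w(t)$, so $\ex_w$ is nonincreasing. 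Exactly as in \cref{step:gammadefconvergence}, $\ex_w$ is invariant under the cone dilations $\omega_s$ of \eqref{eq:dilatation}: setting $w_s = w\circ\omega_s - (n-1)\log s$, which is again a weak IMCF, one has $\ex_{w_s}(t) = \ex_w(t + (n-1)\log s)$.

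Next I would perform the blow-up towards the tip, $s\to 0^+$. Using the gradient bound for $w$ (the exact-cone version of \cref{prop:Cheng-Yau-Improved}, or directly the Huisken--Ilmanen estimate) together with the two-sided control just obtained, the family $(w_s)$ is precompact, and any limit $\bar w$ is a weak IMCF on $(0,+\infty)\times L$ which is scale-invariant, hence has constant eccentricity $\ex_{\bar w}\equiv e_\infty := \sup_t \ex_w(t)\in[1,+\infty)$. Assuming $e_\infty>1$ for contradiction, the level set $\set{\bar w = t_0}$ touches both bounding cross-sections $\set{\rho=r_0}$ and $\set{\rho = e_\infty r_0}$ without coinciding with either; the strict comparison against the smooth model flows (licit because the cross-sections are smooth with $\HH = (n-1)/\rho>0$) then yields strict inequalities in the displayed chain for $T>t_0$, so $\ex_{\bar w}(T)<e_\infty$, contradicting constancy. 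Therefore $e_\infty = 1$, and since $1\leq\ex_w(t)\leq e_\infty = 1$, every level set of $w$ is a cross-section. Integrating the flow speed $\abs{\D w} = \HH = (n-1)/\rho$ along the radial direction finally gives $w = (n-1)\log\rho + \gamma$.

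The main obstacle will be the strict-comparison step: unlike the Strong Comparison Principle \cref{thm:comparison_principle} for the nondegenerate $p$-Laplacian, the weak IMCF is genuinely degenerate and its level sets may jump, so ruling out $e_\infty>1$ requires the sharper avoidance and strong-maximum-principle statements for weak IMCF from \cite{Huisken2001} (in the spirit of the \cite[Blowdown Lemma 7.1]{Huisken2001}), together with a careful check that the eccentricity passes to the blow-up limit without dropping. Establishing that the rescalings converge to a genuine, scale-invariant weak IMCF --- rather than merely to a Lipschitz limit --- is the other delicate point, for which the Huisken--Ilmanen compactness theory is the natural tool.
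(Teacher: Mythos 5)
Your proposal follows the same overall strategy as the paper: define the eccentricity of the level sets, use scale invariance and a blow-down to produce a limit weak IMCF with constant eccentricity equal to the supremum of the eccentricity of $w$, and derive a contradiction from the assumption that this supremum exceeds $1$ by comparison with the model cross-section flows. The preliminary steps you describe (the cross-section flows $(n-1)\log(\rho/a)$ are weak IMCFs, weak comparison gives monotonicity of the eccentricity, the rescalings are precompact by the Lipschitz bound and the Huisken--Ilmanen compactness theorem) all match the paper's argument.

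The genuine gap is exactly the step you flag as ``the main obstacle'' and then leave unresolved. You assert that strict comparison against the smooth model flows is licit because the cross-sections are smooth with $\HH=(n-1)/\rho>0$, but no strong comparison principle between a \emph{weak} IMCF and a smooth flow that are tangent at the initial time is available: \cite[Theorem 2.2(ii)]{Huisken2001} only gives non-strict nesting, and the level sets of the degenerate weak flow may jump, so the tangency could a priori persist and the strict inequality $\ecc_u(t)<\ecc_u$ does not follow. The paper's resolution is a two-barrier sandwich. Since $\set{u\leq 0}$ touches $\set{\rho=r(0)}$ and $\set{\rho=\ecc_u r(0)}$ without coinciding with either, one perturbs $\set{\rho\leq r(0)}$ outward and $\set{\rho\leq \ecc_u r(0)}$ inward to smooth, star-shaped, strictly mean-convex domains $D^-$ and $D^+$ with $\set{\rho\leq r(0)}\subset D^-\subset\set{u\leq 0}\subset D^+\subset\set{\rho\leq \ecc_u r(0)}$. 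The smooth IMCF starting from $D^{\pm}$ exists for all time by \cite[Theorem 3.1]{Zhou2018} and coincides with its weak flow by the Smooth Flow Lemma; the strong comparison principle, now applied between two \emph{smooth} flows (the cross-section flow and the flow of $D^{\pm}$), yields the strict separation
\begin{equation}
\rho(x)>r(0)\,\ee^{\frac{t}{n-1}}\ \text{ for }x\in\partial D^-_t, \qquad \rho(x)<\ecc_u\, r(0)\,\ee^{\frac{t}{n-1}}\ \text{ for }x\in\partial D^+_t,
\end{equation}
while the weak comparison theorem nests $\set{u\leq t}$ between $D^-_t$ and $D^+_t$. It is this combination that produces $\ecc_u(t)<\ecc_u$. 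Without the intermediate smooth barrier construction and the global existence result it relies on, your argument is incomplete at its crucial point.
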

 In the case of the flat Euclidean space, this uniqueness result has been obtained in \cite[Proposition 7.2]{Huisken2001}. 
\proof[Proof of \cref{thm:asymptotic_behaviour_of_IMCF,prop:area_level_set_IMCF,prop:uniqueness-on-cones-IMCF}] The proof follows the same lines of \cref{thm:asymptoticbehaviourofppotentialC0}. We prove the theorem in the case $M$ has only one end, since the general case follows applying the result to each end. We denote by $g_{(s)}$ the metric $s^{-2} \omega_s^*g$ on $[1/s, +\infty) \times L$, being $\omega_s$ the family of diffeomorphism defined in \cref{eq:dilatation}. We divide the proof in three steps. The second and the third one contains the proof of \cref{prop:area_level_set_IMCF,,prop:uniqueness-on-cones-IMCF} respectively. 

\step Define for every $s \geq 1 $ the family of functions $w_s: [1/s,+\infty) \times L \to \R$ as
\begin{equation}
    w_s= w\circ \omega_s - (n-1) \log (s)
\end{equation}
where $\omega_s$ is the map defined in \cref{eq:dilatation}. Employing \eqref{eq:li-yau-imcf-C0AC} in \cref{thm:weak_imcf_AC_to_Riccinonneg} and \cref{prop:Cheng-Yau-Improved} as in the proof of \cref{thm:asymptoticbehaviourofppotentialC0}, it is easy to show that $(w_s)_{s \geq 1}$ is equibounded and equi-Lipschitz. By Arzelà-Ascoli Theorem, $(w_s)_{s \geq 1}$ is precompact with respect to the local uniform convergence on $(0,+\infty) \times L$. Moreover, by \cite[Compactness Theorem 2.1]{Huisken2001} every limit point $u$ is a solution to the (weak) IMCF on $(0,+\infty)\times L$ and by \cref{eq:li-yau-imcf-C0AC} there exists a positive constant $\Cn>0$ such that
\begin{equation}
(n-1) \log (\rho (x)) -\Cn \leq u(x) \leq (n-1) \log(\rho(x)) + \Cn
\end{equation}
is satisfied on $(0,+\infty)\times L$.

\step Here we prove \cref{prop:uniqueness-on-cones-IMCF}, inferring in particular that any limit point $v$ of $(w_s)_{s \geq 1}$ satisfies
\begin{equation}
    v(x)= (n-1) \log \rho (x) + \gamma
\end{equation}
on $(0,+\infty) \times L$ for some $\gamma \in \R$. Let $\ecc_v:\R\to \R $ be defined as
\begin{equation}
    \ecc_v(t)= \frac{ R(t)}{r(t)}
\end{equation}
where, for every $t \in\R$, $[r(t), R(t)] \times L$ is the smallest annulus containing $\set{v=t}$. Arguing as in \cref{step:gammadefconvergence} of \cref{thm:asymptoticbehaviourofppotentialC0} starting from any weak IMCF $v$ on $(0,+\infty) \times L$ we can produce a function $u:(0,+\infty) \times L \to \R$ such that $\ecc_u(t)$ is constant and is equal to $\ecc_u= \sup_t \ecc_v(t)\in [1,+\infty)$. Suppose by contradiction that $\ecc_u>1$. Then the level $\set{u=0}\subset [r(0), \ecc_u r(0)]\times L$ and touches both the cross-sections $\set{\rho =r(0)}$ and $\set{\rho = \ecc_u r(0)}$ without being equal to either one. We aim to compare the weak flow and the two strong flows and prove that the level sets of $u$ detach from the spheres. Perturb $\set{\rho \leq r(0)}$ outward and $\set{\rho \leq \ecc_u r(0)}$ inward to obtain $D^-$ and $D^+$ respectively with the following properties:
\begin{itemize}
    \item $\set{\rho \leq r(0)}\subset D^-\subset \set{u\leq 0 }$ and $\set{u\leq 0 }\subset D^+\subset \set{\rho \leq \ecc_u r(0)}$;
    \item $D^-$ and $D^+$ are starshaped with smooth strictly mean convex boundary.
\end{itemize}
Then the smooth IMCF starting at $D^+$ and $D^-$ exists for all time by \cite[Theorem 3.1]{Zhou2018} and by \cite[Smooth Flow Lemma 2.3] {Huisken2001} it coincides with the weak notion of the IMCF. Denote by $(D^-_t)_{t \geq 0}$ and  $(D^+_t)_{t \geq 0}$ the sublevel sets of the two weak (and smooth) IMCF starting at $D^-$ and $D^+$ respectively. By the Strong Comparison Principle for smooth flows we have that
\begin{align}\label{eq:firstcomparison}
    \rho(x) > r(0) \ee^{\frac{t}{n-1}} \text{ for }x\in\partial D^-_t && \text{and} && \rho(x) < \ecc_u r(0) \ee^{\frac{t}{n-1}} \text{ for }x \in\partial D^+_t.
\end{align}
On the other hand, by the Weak Comparison theorem \cite[Theorem 2.2(ii)]{Huisken2001}
\begin{equation}\label{eq:secondcomparison}
    D^-_t \subset \set{u\leq t} \subset D^+_t.
\end{equation}
Coupling \eqref{eq:firstcomparison} and \eqref{eq:secondcomparison} we have that $\ecc_u(t)< \ecc_u$ which is the desired contradiction. Then $\ecc_u=1$ that completes, as in \cref{step:gammadefconvergence} of \cref{thm:asymptoticbehaviourofppotentialC0}, the proof of \cref{prop:uniqueness-on-cones-IMCF}.

\step  Let $v= (n-1) \log \rho + \gamma$ be a limit point of the family $(w_s)_{s \geq1}$. We are now going to prove that
\begin{equation}\label{eq:characterisation_gamma_IMCF}
    \gamma = \log\left( \frac{\AVR(g) \abs{\S^{n-1}}}{\abs{\partial \Omega^*}} \right).
\end{equation}
The characterisation proves \cref{prop:area_level_set_IMCF} and implies that the limit point is unique, concluding the proof. We work with the auxiliary function
\begin{equation}
    u= \ee^{\frac{w-\gamma}{n-1}}
\end{equation}
Since $v$ is a limit point for the family $(w_s)_{s\geq1}$ there exists a subsequence $(w_{s_k})_{k \in \N}$, $s_k$ increasing and divergent as $k\to +\infty$, such that $w_{s_k}\to v= (n-1) \log \rho + \gamma$ locally uniformly on $(0,+\infty) \times L$ as $k\to +\infty$. Then for any $\varepsilon>0$ there exists $k_\varepsilon\in \N$ such that
\begin{equation}
   \set{ \rho \leq \frac{s_k}{1+ \varepsilon}} \subset \set{ u \leq s_k} \subset \set{ \rho \leq \frac{s_k}{1- \varepsilon}}
\end{equation}
holds for every $k \geq k_\varepsilon$. By \cref{lem:cross-sections-SOM} we can assume $k_\varepsilon$ large enough so that both the left most and the right most sets are strictly outward minimising for any $k\geq k_\varepsilon$. Then the perimeter is monotone by inclusion and by \cite[Exponential Growth Lemma 1.6]{Huisken2001} we have
\begin{equation}
    \abs{\set{\rho = \frac{s_k}{1+ \varepsilon}}} \leq \ee^\gamma s_k^{n-1} \abs{ \partial \Omega^*}\leq \abs{\set{\rho = \frac{s_k}{1- \varepsilon}} }
\end{equation}
Dividing both sides by $\abs{\S^{n-1}}s_k^{n-1}$, sending $k \to +\infty$ and using \cref{lem:AVRconico} we infer that
\begin{equation}
    \frac{\AVR(g)}{(1+\varepsilon)^{n-1}} \leq \ee^{\gamma} \frac{\abs{\partial \Omega^*}}{\abs{\S^{n-1}}}\leq \frac{\AVR(g)}{(1-\varepsilon)^{n-1}}  
\end{equation}
Then \eqref{eq:characterisation_gamma_IMCF} follows by arbitrariness of $\varepsilon>0$. \endproof

Firstly, observe that we do not have the analogous of \cref{thm:asymptoticbehaviourofpotential} for the IMCF. The asymptotic behaviour of higher order derivatives of the $p$-capacitary potential is indeed a consequence of the higher regularity one gets once shown that the gradient does not vanish anymore sufficiently far out. In the case of the weak  IMCF, it is not clear to us how to work out this last step.

The result above is to be compared with \cite[Lemma 7.1]{Huisken2001}. We obtain here an explicit characterisation of the constants $c_\lambda$ that is
\begin{equation} 
    c_\lambda = -(n-1)\log\left(\frac{\abs{\S^{n-1}}}{\abs{\partial \Omega^*}}\lambda\right).
\end{equation}
The constant appearing in \cref{eq:asymptotic_behaviour_IMCF} satisfies
\begin{equation}
   \log\left( \frac{\AVR(g) \abs{\S^{n-1}}}{\abs{\partial \Omega^*}}\right) =\lim_{p \to 1^+} -(p-1) \log \left[\left( \frac{\capa_p(\partial \Omega)}{\AVR(g)}\right)^{\frac{1}{p-1}}\right]
\end{equation}
thanks to \cref{eq:convergence_p-cap_potential_SOMH}, which is the constant in \cref{eq:p-cap_asymptotic_behaviour_C0}, trasformed accordingly to $w_p=-(p-1)\log u_p$. Hence, even if by \cref{thm:weak_imcf_AC_to_Riccinonneg} $w_p\to w$ only locally uniformly as $p\to 1^+$, the asymptotic behaviour of $w$ is anyway affected by this procedure.

\printbibliography

\end{document}